\documentclass[11pt, english,reqno]{amsart}
\usepackage[margin=1.1in]{geometry}
\usepackage{amsmath,amsthm,amssymb,mathtools}
\usepackage[T1]{fontenc}
\usepackage[utf8]{inputenc} 
\usepackage{microtype}
\usepackage{enumitem}
\usepackage{xcolor}
\usepackage{hyperref}
\usepackage[nameinlink,noabbrev]{cleveref}
\usepackage{esint}
\hypersetup{
  colorlinks=true,
  linkcolor=blue,
  citecolor=blue,
  urlcolor=blue
}

\numberwithin{equation}{section}

\theoremstyle{plain}
\newtheorem{theorem}{Theorem}[section]
\newtheorem{lemma}[theorem]{Lemma}
\newtheorem{proposition}[theorem]{Proposition}
\newtheorem{corollary}[theorem]{Corollary}

\theoremstyle{definition}
\newtheorem{definition}[theorem]{Definition}

\theoremstyle{remark}
\newtheorem{remark}[theorem]{Remark}

\newcommand{\R}{\mathbb{R}}
\newcommand{\N}{\mathbb{N}}

\newcommand{\Sph}{\mathbb{S}}

\newcommand{\eps}{\varepsilon}

\newcommand{\cA}{\mathcal{A}}

\DeclareMathOperator{\FB}{FB}

\author{Xavier Fern\'andez-Real}
\address{EPFL SB, Station 8, 1015 Lausanne, Switzerland}
\email{xavier.fernandez-real@epfl.ch}

\author{Enric Florit-Simon}
\address{ETH Zurich, R\"amistrasse 101, 8092 Zurich,
Switzerland}
\email{enric.florit@math.ethz.ch}

\author{Joaquim Serra}
\address{ETH Zurich, R\"amistrasse 101, 8092 Zurich,
Switzerland}
\email{joaquim.serra@math.ethz.ch}

\title[Finite index solutions to the Bernoulli problem]{Finite index solutions to the Bernoulli problem\\ in three dimensions are axially symmetric}

\thanks{X. F. was supported by the Swiss National Science Foundation (SNF grant PZ00P2\_208930), by
the Swiss State Secretariat for Education, Research and Innovation (SERI) under contract number
MB22.00034, and by the AEI project PID2021-125021NA-I00 (Spain). E. F. and J. S. were supported by the European Research Council under the Grant Agreement No 948029.}

\subjclass{35R35, 35J61, 35B07}
\keywords{Geometric variational problems, Bernoulli problem, improvement of flatness, finite Morse index, axially symmetric}

\date{} 

\begin{document}

\begin{abstract}
We show that every entire solution to the Bernoulli (or one-phase) free boundary problem with finite Morse index in $\R^3$ is axially symmetric.  In fact, we additionally prove that the same result would follow in any dimension $4 \le n \le 6$ in which stable entire solutions are shown to be flat.
\end{abstract}

\maketitle

\tableofcontents

\section{Introduction}
Given an open set $D\subset \R^n$, we say that $u: D \to [0, \infty)$ is a classical solution to the Bernoulli (or one-phase) free boundary problem if:
\begin{equation}
\label{eq:classical_Bernoulli}
    \text{$\{u > 0\}$ is locally a smooth domain in $D$}\quad\text{and}\quad 
    \left\{\begin{array}{rcll}
    \Delta u & = & 0 &  \quad \text{in}\quad \{u > 0\}\cap D,\\
    |\nabla u| & = & 1 & \quad\text{on}\quad \partial \{u > 0\}\cap D.
    \end{array}
    \right.
\end{equation}
Such solutions are critical points of the \emph{Alt-Caffarelli functional}
\[
\mathcal{E}(v; A) := \int_A \left( |\nabla v|^2 + \, \mathbf{1}_{\{v >0\}} \right)\,  dx
\]
in any open subset $A$ with compact closure in $D$. This functional was originally introduced by Alt and Caffarelli in \cite{altcaffarelli1981onephase}, and since then, it has become one of the most studied free boundary problems (see also the monographs \cite{CaffarelliSalsa2005, Vel23}). In recent years, the one-phase Bernoulli problem has been studied increasingly beyond the classical minimizing framework, \cite{kamburov2022nondegeneracy, EFY23, BK24, KW24,  CFFS25, EngelsteinRestrepoZhao2025, HinesKolesarMcGrath2025}. 

We say that  $u$ is  \emph{stable in $A$} if the second variation quadratic form of the Alt--Caffarelli functional is nonnegative. That is, 
\begin{equation}
\label{eq:Q_nonnegative}
\mathcal{Q}(\varphi, \varphi; A) := \int_{A\cap \{u > 0\}} |\nabla \varphi|^2\,dx - \int_{A\cap \FB(u)} H\,\varphi^2\, d\mathcal{H}^{n-1}
\ge 0 \qquad \forall \ \varphi \in C_c^\infty\!\bigl(A\cap \overline{\{u > 0\}}\bigr),
\end{equation}
where $H$ denotes the mean curvature of the \emph{free boundary} $\FB(u) := \partial\{u > 0\}$ with respect to the outer unit normal $\nu = -\nabla u$. The definition extends to the case where $A$ is noncompact, since we always consider compactly supported variations.

More generally, $u$ is said to have \emph{Morse index $m\in\N\cup \{0\}$ in $A\subset D$} if
\begin{align*}
    m=\sup\Big\{&\dim E : E\subset C_c^\infty\!\bigl(A\cap \overline{\{u > 0\}}\bigr)\ \mbox{is a linear subspace with}\ 
\mathcal Q(\varphi,\varphi; A)<0\  \forall \varphi\in E\setminus\{0\}\Big\}.
\end{align*}
If $D=\R^n$, then the finiteness of index in $\R^n$ implies stability in $\R^n\setminus \overline{B_{R_0}}$ for some $R_0>0$; see \cite[Proposition 3.4]{BK24}.\\

The goal of this article is to characterize global classical solutions to the Bernoulli problem with finite Morse index in three dimensions, and more generally in every dimension $3\leq n \leq 6$ in which stable entire solutions are known to be flat. This had been previously achieved only in the two-dimensional case \cite{BK24, Tra14}, exploiting the log cut-off trick and a deep, one-to-one correspondence between entire solutions to the Bernoulli problem with finite connectivity in the plane and minimal bigraphs in $\R^3$---both of which are not available in higher dimensions.

We emphasise that there is a wealth of entire classical solutions to \eqref{eq:classical_Bernoulli} without further constraints, as showcased by the examples in \cite{BakerSaffmanSheffield1976, HHP11, Tra14, LWW21, DJS22, ESV24} and their natural extensions to additional dimensions. They have rich structures in terms of free boundary connectivity, topology and symmetries; in dimension $n\geq 7$, the examples from \cite{DJS22, ESV24} are even energy-minimizing.

The finite index hypothesis and dimensional restrictions are then essential in our result. They will lead to a decomposition of such solutions---away from a large ball---into a sum of two solutions with connected free boundaries that become progressively flatter at infinity (see Section~\ref{sec:asymptind}).  We will then combine a novel improvement of flatness in annuli technique developed in Section~\ref{sec:improvflat} (see also the forthcoming work \cite{FFS25A}) with the moving planes method \`a la Schoen/Serrin \cite{Schoen83, Serrin71} in Section~\ref{sec:symindex} and several new arguments particular to the Bernoulli problem, leading to the axial symmetry of all such solutions. See Section~\ref{sec:mainresults} for more details.\\ 

The study of finite index solutions to geometric variational problems, and in particular of entire solutions in Euclidean space, has a long history in the literature. Such solutions provide many of the known examples \cite{Costa84, dPKW13}. Moreover, they are especially relevant to the existence theory in compact geometries, see \cite{MarquesNeves2016, GasparGuaraco2018, CM20}. For further examples of structure and classification results for solutions of finite index to other geometric variational problems, we refer the reader to \cite{FC85, Tysk89, Schoen83, CL24, CLMS25, Maz24} for minimal hypersurface theory, \cite{WangWei18, GuiLiuWei,GuiWangWei2020, KLP12, Flo25} for the Allen--Cahn equation, \cite{wang2015structure, DGW22} for its elegant free boundary version, and references therein.

\subsection{Main results}\label{sec:mainresults}
Our main result in dimension 3 is the following:

\begin{theorem}\label{thm:axymR3}
    Let $u:\R^3\to[0,\infty)$ be an entire classical solution to the Bernoulli problem with finite Morse index which is not one-dimensional. Then, after a suitable scaling, translation, and rotation, we have that:
    \begin{itemize}
        \item either $u(x) = (1-|x|^{-1})_+$;
        \item or  $u(x_1, x_2, x_3) = u(r, x_3)$ is axially symmetric, with $ r= \sqrt{x_1^2+x_2^2}$, even in $x_3$, and satisfies
        \[
        \partial_{x_3} u > 0\quad\text{in}\quad \{x_3 > 0\}\cap \{u > 0\},\qquad \partial_{r} u < 0\quad\text{in}\quad \{r > 0\} \cap \{u > 0\}.
        \]
        Moreover, $\{u = 0\} = \{(x', x_3) : |x'|\ge 1, |x_3| \le g(|x'|)\}$ for some $g:[1,\infty) \to [0, \infty)$ with $g(1) = 0$, $g'(r) > 0$ for $r >1$, and $\lim_{r\to\infty} g(r) < \infty$. 
    \end{itemize}
\end{theorem}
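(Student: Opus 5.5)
The proof has three stages: an asymptotic description of $u$ outside a large ball, a reflection argument giving the symmetry, and a final analysis of the zero set. We use that finite index implies stability in $\R^3\setminus\overline{B_{R_0}}$ \cite[Proposition 3.4]{BK24}, and that stable entire solutions in $\R^3$ are flat.

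\emph{Asymptotics.} We first describe $u$ at infinity. Blow down: for $R\to\infty$, the rescalings $u_R(x)=u(Rx)/R$ are stable away from a shrinking ball around the origin. Their limits should be stable cones, singular at most at $0$, hence flat. Monotonicity of the Weiss energy, together with an index bound on the number of sheets, then says the blow-down is either a single half-plane solution $(x\cdot e)_+$ or a two-sided "wedge" $|x\cdot e|$. The first case should make $u$ one-dimensional by a Liouville-type rigidity argument. Strictly speaking, the radial solution $(1-|x|^{-1})_+$ blows down to $0$ on the exterior region, so a third alternative, a compact zero set, must be kept. For the wedge case we use the improvement of flatness in annuli from Section~\ref{sec:improvflat}, applied dyadically on $B_{2^{k+1}}\setminus B_{2^k}$. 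This should show that outside a large ball $\{u=0\}$ lies between two graphs $x_3=g_\pm(x')$ with bounded, convergent asymptotics, $g_\pm(x')=c_\pm+o(1)$ (up to log-type corrections, which must be ruled out). We also get $u=|x_3-c_\pm|+$ lower order, with expansions precise enough to compare reflections. This is the decomposition of Section~\ref{sec:asymptind}.

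\emph{Moving planes.} Next we run moving planes in the manner of Schoen. For a horizontal direction $e\perp e_3$, reflect across $\{x\cdot e=\lambda\}$ and compare $u$ with $u^\lambda$ on the half where $x\cdot e>\lambda$. For $\lambda$ large the asymptotic expansions give the ordering $u^\lambda\le u$ (or $\ge$, as appropriate) at infinity. Inside a large ball, nondegeneracy and the free boundary condition $|\nabla u|=1$, combined with a boundary Hopf lemma for the Bernoulli problem (the standard Serrin-type corner case at free boundary points), let us move the plane. At the critical $\lambda$, the strong maximum principle and the free boundary comparison principle force $u\equiv u^\lambda$. Doing this for every horizontal $e$ gives axial symmetry about a vertical line. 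Repeating in the $e_3$ direction, with the two ends playing symmetric roles, gives evenness in $x_3$ after a translation, and the monotonicity $\partial_{x_3}u>0$ for $x_3>0$ and $\partial_r u<0$ comes out of the same argument. In the compact zero set case, the same argument in every direction gives radial symmetry, and the ODE classification yields $(1-|x|^{-1})_+$.

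\emph{Zero set and main obstacle.} Monotonicity in $x_3$ and in $r$ shows $\{u=0\}$ is a region $\{|x_3|\le g(r)\}$ with $g$ increasing and bounded by the asymptotics. Smoothness of the free boundary gives $g(r_0)=0$ at the inner edge, and scaling normalizes $r_0=1$. The main obstacle is the asymptotic step. We need decay of $u-|x_3-c|$ fast enough and sharp enough (including excluding logarithmic growth of the sheets, which is the 3D analogue of catenoidal ends) to start the moving plane. We also need to control the possibility of the reflection touching at infinity. I would first try to obtain a Hölder or $|x|^{-\alpha}$ decay of the sheet oscillation via the annular flatness improvement, and then upgrade it using a harmonic-function expansion in the exterior of the zero set.
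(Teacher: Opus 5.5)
Your outline follows the paper's architecture: outer stability, blow-down to a half-plane or a wedge, annular improvement of flatness, Schoen/Serrin moving planes, and radial moving planes plus the ODE in the compact case. However, the step that actually produces axial symmetry is not supported by anything you establish.

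The leading-order profile $|x_3|-b$ is invariant under reflection in every vertical plane. So ``for $\lambda$ large the asymptotic expansions give the ordering at infinity'' has no content until the next term is identified. What is needed is $u_\pm=\pm x_3-b+c_\pm|x|^{-1}+d_\pm\cdot x\,|x|^{-3}+O(|x|^{-2-\alpha})$, with gradient error $O(|x|^{-3-\alpha})$, and above all $c_\pm>0$. The positive monopole is what makes $u$ decrease away from the axis, since $\partial_{x_1}\bigl(c|x-be_3|^{-1}\bigr)=-c\,x_1|x-be_3|^{-3}$. The resulting gain, of order $c\,\delta\,x_1|x|^{-3}$ against an error of order $x_1|x|^{-3-\alpha}$, is exactly what excludes touching at infinity uniformly for all planes $\{x_1=t\}$ with $t\ge\delta$ (Lemma~\ref{lem:refln1}(b)). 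That is the point you leave open.

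Proving $c>0$ is a separate idea absent from your plan. The paper first obtains the global barrier $u\ge|x_3|-b$ by sliding the harmonic functions $x_3-h$ and applying Hopf's lemma against $|\nabla u|=1$ (Lemma~\ref{lem:barrier}); this yields $b>0$ and $c\ge0$. If $c=0$, then $d=0$, and $u-x_3+b$ would be a nonnegative harmonic function in $\{x_3>b\}$ decaying like $|x|^{-2-\alpha}$. That is faster than the Poisson kernel, so it vanishes identically, which is incompatible with $u$ not being one-dimensional (Lemma~\ref{lem:precisectt}).

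Your worry about logarithmic corrections is misplaced for $n=3$. The linearized Neumann problem reflects evenly to harmonic functions in $\R^3\setminus\{0\}$, so the only next-order profiles are $|x|^{-1}$ and $x|x|^{-3}$ (Lemma~\ref{lem:perturbativecompactness}). Also, the upgrade cannot be a plain harmonic expansion in an exterior domain, because $\{u>0\}$ is not fixed. The paper instead reruns the annular iteration on the finite-dimensional family $\mathcal{A}(\kappa)$ of monopole--dipole Ans\"atze, using that the Neumann datum of $u-\hat v$ on the free boundary is quadratically small (Lemma~\ref{fluxcontrol}).

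There is also an ordering problem. To move horizontal planes all the way to the axis, you must first remove the horizontal dipole by translating in $x'$ by $d'_\pm/c_\pm$. A single translation works only if both ends share the same center. You reflect in horizontal directions first, so suppose a priori $d'_+/c_+\neq d'_-/c_-$. Then the plane stalls at the first center it meets: there the ordering at infinity degenerates for one end while the other end is strictly ordered, so you get neither continuation nor $u\equiv u^\lambda$.

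The paper therefore does the vertical reflection first. That step only needs $u_\pm=\pm x_3-b+o(1)$ after a vertical translation making $b_+=b_-$, together with $\pm\partial_{x_3}u_\pm>3/4$ far out (Lemmas~\ref{lem:gradn} and~\ref{lem:refln}). Evenness in $x_3$ then forces $c_+=c_-$ and $d'_+=d'_-$, so one horizontal translation normalizes both ends (Lemma~\ref{lem:expansion'}).
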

In fact, we obtain a sharp conditional result up to dimension $6$.
Define the \textit{critical dimension}
\begin{equation}\label{eq:nstardef}
    n_* := \min\{n_*^1,n_*^2\},
\end{equation}
where
\begin{equation}\label{eq:nstardef1}
    n_*^1 := \min\left\{n \in \N : \ \begin{array}{l}\text{there exists a classical stable solution}\\
    \text{in $\R^n$ which is not one-dimensional}\end{array}\right\},
\end{equation}
\begin{equation}\label{eq:nstardef2}
    n_*^2 := \min\left\{n \in \N : \ \begin{array}{l}\text{there exists a classical stable one-homogeneous}\\
    \text{solution in $\R^n\setminus\{0\}$ which is not one-dimensional}\end{array}\right\}.
\end{equation}
By \cite{CJK04,jerisonsavin2015cones,DeSilvaJerison2009} and \cite{CFFS25, DJS22,ESV24} we have $5\leq n_*^1\leq 7$ and $4\leq n_*^2\leq 7$, hence
\[
4 \le n_* \le 7. 
\]
Then, an analogous result holds for $3 \le n < n_*$:
\begin{theorem}\label{thm:axymRn}
    Let $3\leq n < n_*$, with $n_*\ge 4$ given by \eqref{eq:nstardef}. Let $u:\R^n\to[0,\infty)$ be an entire classical solution with finite Morse index which is not one-dimensional. Then, after a suitable scaling, translation, and rotation, we have that:
    \begin{itemize}
        \item either $u(x)=\frac{1}{n-2}(1-|x|^{2-n})_+$;
        
        \item or  $u(x_1,\dots,x_{n-1}, x_n) = u(r, x_n)$ is axially symmetric, with $ r= \sqrt{x_1^2+\dots + x_{n-1}^2}$, even in $x_n$, and satisfies
        \[
        \partial_{x_n} u > 0\quad\text{in}\quad \{x_n > 0\}\cap \{u > 0\},\qquad \partial_{r} u < 0\quad\text{in}\quad \{r > 0\} \cap \{u > 0\}.
        \]
        Moreover, $\{u = 0\} = \{(x', x_n) : |x'|\ge 1, |x_n| \le g(|x'|)\}$ for some $g:[1,\infty) \to [0, \infty)$ with $g(1) = 0$, $g'(r) > 0$ for $r >1$, and $\lim_{r\to\infty} g(r) < \infty$. 
    \end{itemize}
\end{theorem}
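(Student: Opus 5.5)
The strategy has three stages: describe the solution at infinity, upgrade that description to quantitative decay, then run the moving planes method.

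\emph{Step 1: structure at infinity.} By \cite[Proposition 3.4]{BK24}, $u$ is stable in $\R^n\setminus \overline{B_{R_0}}$ for some $R_0>0$. Consider the blow-downs $u_R(x)=u(Rx)/R$ as $R\to\infty$. Stability away from a small ball around the origin, together with $n<n_*$, gives the needed compactness. Since $n<n_*^2$, every stable one-homogeneous limit on $\R^n\setminus\{0\}$ is one-dimensional. Since $n<n_*^1$, every stable entire limit arising at intermediate scales is flat. From these two facts I expect every blow-down to be one of the following: a half-plane solution $x_n^+$, a ``two-plane'' configuration $|x_n|$ (the wedge limit with two phases glued along a hyperplane), or $0$. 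A monotonicity formula (Weiss energy) rules out the mixing of different blow-down types along different sequences.

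Using this, I would decompose $u$ outside a large ball. In the two-plane case, $u$ splits into two pieces, each with connected free boundary and each a graph over $\{x_n=0\}$ that becomes flatter and flatter at infinity (Section~\ref{sec:asymptind}). The half-plane case should force $u$ to be one-dimensional; I would prove this via improvement of flatness and a Liouville argument. The case where $u$ is compactly supported, or has bounded positivity complement, should lead to $\{u=0\}$ compact. In that case Serrin's moving planes (or Reichel's exterior version) gives radial symmetry, hence $u=\frac{1}{n-2}(1-|x|^{2-n})_+$ after scaling.

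\emph{Step 2: quantitative asymptotics.} This is the main obstacle. Qualitative flatness is not enough for moving planes: I need the two free boundary sheets to be graphs $x_n=\pm g_\pm(x')$ with $g_\pm(x')\to c_\pm$, and with $\nabla g_\pm$ decaying at a rate. I also need $u$ to be asymptotic, in a $C^1$ sense, to an explicit profile close to $|x_n|$ minus a small bounded correction, with an expansion in $|x'|^{2-n}$-type terms. My plan is to apply the improvement of flatness in annuli from Section~\ref{sec:improvflat} on dyadic annuli $B_{2^{k+1}}\setminus B_{2^k}$. Iterating it with geometric decay should show that the flatness at scale $2^k$ decays like $2^{-k\alpha}$. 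I would then linearize: the linearized problem is harmonic in a slab-like region with Neumann-type boundary conditions, and in dimension $n-1\ge2$ its decaying solutions are multiples of fundamental-solution terms plus $O(|x'|^{1-n})$ remainders. The difficulty is controlling the interaction between the two sheets. Their distance stays bounded, so the problem is not a single-sheet perturbation, and one must show that the gap converges to a constant.

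\emph{Step 3: moving planes.} Once these asymptotics are in hand, I would run Schoen's moving planes argument \cite{Schoen83} in each horizontal direction $e\perp e_n$, reflecting across $\{x\cdot e=\lambda\}$. The expansions make the comparison possible at infinity for $\lambda$ large. The Hopf lemma and the free boundary version of the comparison principle (strict ordering of free boundaries with $|\nabla u|=1$) then allow $\lambda$ to decrease down to a symmetry plane. This gives axial symmetry about a vertical axis together with $\partial_r u<0$. Reflecting across horizontal planes $\{x_n=\lambda\}$ in the same way gives evenness in $x_n$ and $\partial_{x_n}u>0$ in the upper half. Finally, the description of $\{u=0\}$ follows from these monotonicities. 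The set $\{u=0\}$ is then a solid of revolution bounded by the two graphs, which meet at $r=1$ after scaling, with $g'>0$ from strict monotonicity and $g$ bounded from Step 2.
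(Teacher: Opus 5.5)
Your three stages follow the paper's architecture: split $u=u_++u_-$ outside a ball, expand each piece as $\pm x_n-b+c_\pm|x|^{2-n}+d_\pm\cdot x\,|x|^{-n}+O(|x|^{1-n-\alpha})$ via the annular improvement of flatness, then run moving planes. But Step~3 has a genuine gap: the expansion alone does \emph{not} make the horizontal comparison at infinity possible. A reflection across $\{x\cdot e=\lambda\}$ with $e\perp e_n$ leaves $x_n-b$ invariant. So far out, the sign of $u(x^\lambda)-u(x)$ (with $x^\lambda$ the reflected point) is dictated entirely by $c\,(|x^\lambda|^{2-n}-|x|^{2-n})$. The horizontal dipole $d'\cdot x'|x|^{-n}$ has no sign and must be removed by translating by $d'/((n-2)c)$. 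The remainder $O(|x|^{1-n-\alpha})$ can only be absorbed when $c\neq 0$. The sign matters as well: $c>0$ is exactly what gives $\partial_r u<0$.

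You never establish $b>0$ and $c_\pm>0$, and this is one of the Bernoulli-specific steps of the paper (Lemmas~\ref{lem:barrier} and~\ref{lem:precisectt}). First comes the barrier $u\ge|x_n|-b$. Lower $x_n-h$ from $h=\infty$; by the expansion the first contact lies in a compact set. By the strong maximum principle it must occur at a point of $\{x_n=h\}\cap\FB(u)$, where $|\nabla u|=1=\partial_n(x_n-h)$ contradicts Hopf. This gives $b>0$ (otherwise $u\ge|x_n|$), and letting $|x|\to\infty$ in $\{x_n>b\}$ gives $c_+\ge 0$. Now suppose $c_+=0$. Then $(d_+)_n=(n-2)bc_+=0$. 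Positivity of $w:=u-x_n+b$ at the points $(-\lambda d_+',2b)$, $\lambda\to\infty$, forces $d_+'=0$. Hence $0\le w\le C|x|^{1-n-\alpha}$ in the half-space $\{x_n>b\}$. This contradicts the $|x|^{1-n}$ Poisson-kernel lower bound for positive harmonic functions there, unless $w\equiv 0$, i.e.\ unless $u_+$ is a half-plane solution and $u$ is one-dimensional.

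Your order of reflections is also problematic. Before evenness in $x_n$ is known, the two sheets carry independent $c_\pm$ and $d'_\pm$, hence possibly different horizontal centers $d'_\pm/((n-2)c_\pm)$. For $\lambda$ below a sheet's center, the monopole difference on that sheet has the wrong sign at infinity. So the plane stops at the larger of the two centers, and nothing forces a contradiction there. The paper runs the vertical reflection first. That step needs only first-order asymptotics, a vertical translation making $b_+=b_-$, and $\partial_{x_n}u_+>3/4$, $\partial_{x_n}u_-<-3/4$ far out. After it, a single horizontal translation centers both sheets. Where the free boundary meets the moving plane orthogonally, you need Serrin's second-order corner lemma, not just Hopf.

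Some smaller points:
\begin{itemize}
\item The Weiss energy cannot separate the compact-free-boundary case from the $|x_n|$ blow-down, since $W(u,R)\to|B_1|$ in both. The paper instead shows that once $\FB(u)$ meets a large annulus, the two-plane picture forces it to meet every $\partial B_s$, $s\in(r,4r)$, and then iterates.
\item In Step~2 there is no interaction between the sheets to control. The $u_\pm$ have disjoint supports, so each is a classical solution in an exterior domain with its own expansion.
\item A normalized flatness rate of $2^{-k\alpha}$ would not even make the shifts converge. The annular scheme gives $H(u,r)/r\lesssim r^{2-n-\alpha}$.
\end{itemize}
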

\begin{remark}
\label{rem:flat_away}
It is interesting to notice that, in fact, our proof shows that any classical entire solution $u:\R^n\to[0,\infty)$, decomposing as $u=u_++u_-$ away from a large ball $B_{R_0}$, with $u_\pm$ asymptotically flat classical solutions in $\R^n\setminus B_{R_0}$, satisfies the thesis of Theorem~\ref{thm:axymRn}.
\end{remark}
\begin{remark}\label{rem:asympintro}
    In fact, we have, for any $\alpha \in (0, 1)$, 
    \[
    \left|u-|x_n|+  b  -\frac{c} {|x|^{n-2}} - \frac{(n-2) b c |x_n|}{|x|^{n}}\right| \le \frac{C_\alpha}{|x|^{n-1+\alpha}}\quad\text{in}\quad \{u > 0\},\]
    for some $b, c > 0$. In particular, the asymptotic expansion for $g:[1, \infty)\to [0, \infty)$ is: 
    \[
    \begin{split}
    g(r) & = b - c r^{2-n} + O(r^{-n+1-\alpha}),\\
    g'(r)& =(n-2)cr^{1-n}+ O(r^{-n-\alpha}),\\
    g''(r)& =-(n-1)(n-2)cr^{-n}+ O(r^{-n-\alpha-1}),
    \end{split}
    \]
    Moreover, we know $g'(r)  > 0$ for $r > 1$, and from the sign on the mean curvature of the free boundary we also know $g''(r) \le - g'(r) (1+g'(r)^2)/r  < 0$ for $r > 1$. 
    \end{remark}
    \begin{remark}
    In \cite{LWW21}, the authors construct an example of an entire axially symmetric  solution in all dimensions $n\geq 3$ via gluing methods. From its symmetries and monotonicity relations, it readily follows that it is stable outside of a ball\footnote{We emphasize that this is known to be equivalent to having finite Morse index for a large class of problems and operators, see \cite{Devyver2012}.}, and our method applies (alternatively, one can use Theorem~\ref{thm:highordexp_intro} below to get their expansion). Furthermore, they show that $\lim_{r\to \infty} g'(r) r^{n-2}\ge 0$ for their solution, while from the expansions above one deduces that, in fact, $\lim_{r\to \infty} g'(r) r^{n-1} \in (0, +\infty)$ in their construction.
\end{remark}
A main step in the proof, which we believe may be of independent interest, is the study of asymptotically flat solutions in exterior domains. Our approach follows an improvement of flatness-type scheme in which the natural domains are annuli (instead of balls, as is the usual case in improvement of flatness-type iterations). This is a simple yet powerful and flexible tool, and which can be applied to other similar settings, see \cite{FFS25A,Flo25}, and compare also with the variational approach of \cite{EdelenSpolaorVelichkov2024}.

We say that a (nonzero) classical solution $u: \R^n \setminus B_{R_0}\to[0,\infty)$ is \textit{asymptotically flat} if for all $R>R_0$ large there exists $e_R\in \mathbb S^{n-1}$ such that
\[
 u =  e_R\cdot x  + o(R) \qquad \mbox{in} \quad  (B_R \setminus B_{R/2}) \cap  \{u>0\}
\]
as $R\to \infty$. The main result in Section~\ref{sec:improvflat} is:
\begin{theorem}\label{thm:highordexp_intro}
Let $\alpha\in (0, 1)$, $R_0>0$, and $u$ be a classical solution to the Bernoulli problem, \eqref{eq:classical_Bernoulli}, in $\R^n \setminus \overline{B_{R_0}}\subset \R^n$, which is asymptotically flat. Then, after a rotation and for some  $(b, c, d) \in \R\times \R\times \R^n$ with ${d}_n=(n-2){ b}{ c}$, we have
\begin{equation}\label{expansion222intro}
         u(x) = x_n- b+\frac{c}{|x|^{n-2}} + \frac{d\cdot x}{|x|^{n}} +  O\left( {|x|^{1-n-\alpha}}\right)\qquad \text{in}\quad \{u > 0\}\setminus B_{R_0},
    \end{equation}
as $x\to \infty$.
\end{theorem}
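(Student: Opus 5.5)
The plan is an improvement of flatness scheme on dyadic annuli $A_k := B_{2^{k+1}}\setminus B_{2^k}$, with $k$ large, followed by a linearization at infinity. The linearized problem around the flat solution $(x_n)_+$ is the Neumann problem $\Delta w = 0$ in $\{x_n>0\}$, $\partial_n w = 0$ on $\{x_n=0\}$. Reflecting evenly across $\{x_n=0\}$, it becomes the harmonic equation in $\R^n$. The exterior harmonic functions that decay and are even in $x_n$ are spanned by $|x|^{2-n}$, the dipoles $x_i|x|^{-n}$ with $i<n$, and higher multipoles of order $|x|^{1-n-j}$. Together with the growing modes (constants, and $x_i$ with $i<n$, corresponding to translations and rotations), these produce exactly the terms appearing in \eqref{expansion222intro}.

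\textbf{Step 1 (flatness decays at a rate).} Define the flatness of $u$ in $A_k$ as
\[
\eps_k := \inf_{e,\,b}\ 2^{-k}\,\bigl\|u-(e\cdot x-b)\bigr\|_{L^\infty(A_k\cap\{u>0\})},
\]
normalized together with the Hausdorff closeness of the free boundary to the plane. By asymptotic flatness, $\eps_k\to0$. I will prove an annular improvement: if $\eps_k$ is small on a block of annuli $A_{k-N},\dots,A_{k+N}$, then on the central annulus the flatness improves by a factor $2^{-N\gamma}$ after readjusting $e$ and $b$, up to an error that is superlinear in $\eps$.

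The argument is by compactness. Take a sequence of counterexamples and rescale the normalized errors $(u-e\cdot x+b)/\eps$ on the union of annuli. The partial Harnack and viscosity theory of De Silva gives equicontinuity, so a subsequence converges to a solution of the Neumann problem on a large annulus. That limit is a harmonic function (after reflection) on a huge annulus $B_{2^N}\setminus B_{2^{-N}}$ with bounded relative size. Its Laurent-type expansion splits it into a growing part and a decaying part. The growing part consists of the linear and constant modes, which are absorbed by adjusting $e,b$; the higher growing modes are suppressed because they would be large on the outer annuli. The decaying part is small on the middle annulus.

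Iterating this improvement, together with an outer-to-inner consistency argument, should give $\eps_k\le C2^{-k\beta}$ for some $\beta>0$, and also convergence $e_k\to e_\infty$ at a geometric rate. Rotate so that $e_\infty=e_n$. I expect this step to be the main obstacle. The difficulty is running the improvement on annuli without a ball and with no boundary information at the inner radius. Standard ball-based improvement fails here, and one must instead show that the decaying mode $|x|^{2-n}$ at scale $2^k$ cannot force flatness to grow at larger scales. My first attempt will use the asymptotic flatness hypothesis at the outer scales as the barrier and propagate inward.

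\textbf{Step 2 (bootstrap to $O(|x|^{1-n-\alpha})$).} Once polynomial decay of $u-(x_n-b)$ is known, i.e.\ $|u-x_n+b|\le C|x|^{1-\beta}$, I flatten the free boundary by a hodograph transform. The Legendre partial hodograph map turns the problem into a nonlinear Neumann problem on the half-space exterior, whose nonlinearity is quadratic in the gradient of the unknown. I then solve the linear exterior Neumann problem (evenly reflected) with the quadratic terms as sources, which decay like $|x|^{-2\beta}$ times the appropriate powers.

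Each iteration improves the decay exponent. Subtracting successively the constant $b$, then $c|x|^{2-n}$, then $d\cdot x|x|^{-n}$ leaves a remainder of size $O(|x|^{1-n-\alpha})$, where $\alpha<1$ reflects the gap to the next multipole and the nonlinear errors. Schauder estimates on annuli convert the $L^\infty$ bounds into the pointwise expansion, and allow transferring it back from hodograph coordinates to $u$ on $\{u>0\}$.

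\textbf{Step 3 (the relation $d_n=(n-2)bc$).} The identity $d_n=(n-2)bc$ is forced by the nonlinearity, not by the linear theory. Expanding $c|x-be_n|^{2-n}$ gives $c|x|^{2-n}+(n-2)bc\,x_n|x|^{-n}+\dots$, so the natural expansion of the solution is in terms of the shifted variable $x-be_n$. To verify the relation, I will plug the expansion into the free boundary condition $|\nabla u|=1$ on $\{u=0\}$, where $x_n\approx b$. The $x_n|x|^{-n}$ mode is the one that fails to satisfy the homogeneous Neumann condition on $\{x_n=0\}$. Matching the order-$|x|^{-n}$ terms in $\partial_n u$ on the free boundary then yields $d_n=(n-2)bc$.
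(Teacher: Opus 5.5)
Your Step~1 is not yet a proof. You state a block lemma and then only announce that iterating it ``should give'' a rate. It does close, and more easily than you fear.

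The block lemma itself is sound: flatness $\le \eps\le\eps_0(N)$ on $A_{k-N},\dots,A_{k+N}$ gives flatness $\le 2^{-N\gamma}\eps$ on $A_k$. In the reflected limit, the $|x|^{2-n}$ and lower modes are controlled by the bound on the \emph{inner} annuli and contribute only $O(2^{-N(n-2)})$ at the centre. So the decaying mode is not an obstruction, and no information near $\partial B_{R_0}$ is ever used. The envelope $E_k:=\sup_{j\ge k}\eps_j$ then satisfies $E_k\le 2^{-N\gamma}E_{k-N}$ for $k\ge K_0+N$, since each block centred at $j\ge k$ lies in $\{i\ge k-N\}$. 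Hence $\eps_k\le C2^{-\gamma k}$.

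However, with uniform hypotheses the quadratic modes are only damped by $2^{-N}$: for instance $\eps 2^{-N}(x_1^2-x_2^2)$ has relative flatness $\lesssim\eps$ on the whole block. So this gives only $\gamma<1$. Step~1 alone therefore does not make the shifts $b_k$ converge, and the input to Step~2 is $|u-x_n|\lesssim |x|^{1-\beta}$, i.e.\ sublinear \emph{growth}, not decay. Your hodograph bootstrap can start from there, since the exponent roughly doubles each round. But while $2\beta\le 1$ the Newtonian potentials of the quadratic sources diverge and must be renormalized. Resonant exponents (e.g.\ $1-2\beta=0$, or $-n$ when $n=3$) also produce logarithms that have to be tracked.

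With this repair your route is genuinely different from the paper's.

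\textbf{First stage.} The paper never iterates a fixed block. In Theorem~\ref{thm1} it picks the worst scale $r_\star$ of $H(u,r)/\phi(r)$, with $\phi(t)=t(t^{2-n-\alpha}+(t/R)^\alpha)$. Relative to $r_\star$, the height then grows at most like $t^{1+\alpha}$ outward and $t^{3-n-\alpha}$ inward. In the even-reflected limit, the first bound kills quadratics (Liouville) and the second kills $|x|^{2-n}$ (removable singularity). This yields $H(u,r)\lesssim r^{3-n-\alpha}$ in one shot, already past the threshold where $b$ exists (Corollary~\ref{cor1}).

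\textbf{Second stage.} This is again a worst-scale compactness argument, now modulo the Ans\"atze $x_n+\hat b+\hat c|x|^{2-n}+\hat d'\cdot x'|x|^{-n}$. The identity $\bigl||\nabla \hat v|^2-1\bigr|\le C\kappa^2$ on $\{|\hat v|<\kappa\}$ makes the Neumann datum of $u_r-\hat v$ quadratically small, and Lemma~\ref{lem:perturbativecompactness} identifies the limit. So neither a hodograph transform nor an exterior Poisson problem is needed. Your approach is more classical and modular, close to how ends of minimal hypersurfaces are expanded, at the price of those technicalities; the paper's is purely compactness-based.

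\textbf{The relation $d_n=(n-2)bc$.} In the paper it appears because the Ansatz centred at $b_0e_n$ has no vertical dipole (the linearized limit is even), and one Taylor expands $c|x-b_0e_n|^{2-n}$. Your Step~3 reads off the $|x|^{-n}$ coefficient of $|\nabla u|^2-1$ on $\{x_n\approx b\}$. That is an equivalent and correct derivation, provided you have the remainder in $C^1$, namely $|\nabla(u-v)|=O(|x|^{-n-\alpha})$ as in \eqref{expansion222_grad}.
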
 
Section~\ref{sec:asymptind} shows, via a blow-down analysis, that any entire solution $u:\R^n\to[0,\infty)$ with finite index and $3\leq n <n_*$ decomposes as $u=u_++u_-$ outside of a large ball, where each of the $u_\pm$ is an asymptotically flat classical solution. Theorem~\ref{thm:highordexp_intro} then applies to each of the $u_\pm$, yielding a precise asymptotic expansion for $u$. The resulting asymptotics are then used in a crucial (and subtle) way in Section~\ref{sec:symindex}, in combination with the moving planes method \`a la Schoen/Serrin \cite{Schoen83, Serrin71} and global information coming from stationarity, to deduce Theorem~\ref{thm:axymRn}.

\begin{remark}
    It is interesting to consider which alternative conditions might also provide such a strong characterization of global solutions. In two dimensions, the natural condition of finite connectivity of the free boundary suffices, by \cite{Tra14}. In higher dimensions, however, even finite topology is far from sufficient anymore, by the examples mentioned above. In fact, one expects a wealth of examples in $\R^3$ with finite topology and no elementary characterization, even with a density condition on the positivity set, as suggested by the homogeneous solutions recently constructed in \cite{HinesKolesarMcGrath2025}. In Section~\ref{sec:totHcurv} we propose a new, related condition: the finiteness of the total mean curvature of the free boundaries, defined in a scale-invariant integral sense (see Definition~\ref{def:totHcurv}). It implies finite connectivity in two dimensions, as shown in \cite{BK24}. We will show that it ensures the axial symmetry of global solutions in all higher dimensions too, via the methods developed in the article.
\end{remark}

\section{Solutions in exterior domains}\label{sec:improvflat}

In this section, we study asymptotically flat solutions in exterior domains. 
We first prove an annular improvement-of-flatness result and then use it to derive the higher-order expansion at infinity.

Throughout the paper, whenever \(x=(x',x_n)\in\R^{n-1}\times\R\), we write
\[
B'_r(y'):=\{z'\in\R^{n-1}: |z'-y'|<r\},\qquad B'_r:=B'_r(0).
\]
Moreover, given sets $A_1, A_2, A_3\subset \R^n$, we say 
\[
A_1\subset A_2\quad\text{in}\quad A_3\qquad\Longleftrightarrow \qquad A_1\cap A_3\subset A_2\cap A_3. 
\]

\subsection{Improvement of flatness in annuli}\label{sec:improvflat2}

The following quantity records the best trapping of $u$ between two parallel shifted half-plane solutions on a fixed annulus.

\begin{definition}\label{defi:height} Let $n\ge 3$. For a given classical solution \eqref{eq:classical_Bernoulli} in  a domain $D\subset \R^n$  containing the annulus $B_{2r}\setminus \overline{B_{r/2}}$, we define the \emph{shifted annular height} in the direction $e\in \mathbb S^{n-1}$, with shift $b\in \R$, and at scale $r>0$ as:
\[
    H_b(u,e,r) := \inf\big\{  h>0 \  : \ (e\cdot x -b -h)_+\le u\le (e\cdot x -b+h)_+ \ \mbox{ in } B_{2r}\setminus \overline{B_{r/2}} \big\}.
\]
The \emph{centered annular height} is the particular case $b=0$, $H_0(u, e, r)$. We also define 
\[H(u,r) := \inf_{e\in \mathbb{S}^{n-1},\, b \in \R} H_b(u,e, r)\qquad \mbox{and}\qquad H_0(u,r) := \inf_{e\in \mathbb{S}^{n-1}} H_0(u,e, r).\]
\end{definition}
\medskip

Notice that we have the scaling
\begin{equation}
    \label{eq:scaling}
    H_b(u, e, r) = \rho H_{b/\rho} (u_\rho, e, r/\rho),\qquad\text{for}\quad \rho  >0 \quad \text{and} \quad u_\rho(x) := \frac{u(\rho x)}{\rho}. 
\end{equation}

We next state the improvement of flatness argument in annuli. This is one of the main contributions of the work, and which can be extrapolated to other settings: 

\begin{theorem}\label{thm1}
Let $n\ge 3$,  $R \ge 4$,  and $\alpha\in (0, 1)$. There are $C = C(n, \alpha)$ and $\eta_0 = \eta_0(n, \alpha)>0$ such that the following holds:

Let $u$ be a classical solution to the Bernoulli problem, \eqref{eq:classical_Bernoulli}, in $D = B_R\setminus \overline{B_1}\subset \R^n$, satisfying
\begin{equation}\label{flatnesshypothesisthm}
    H_0(u,r) \le \eta r\qquad\text{for all}\quad r\in (2, R/2),
\end{equation}
for some $\eta \le \eta_0$.

Then,
\begin{equation}\label{goal}
H(u,r) \le C\eta r\left[ r^{2-n-\alpha} + (r/R)^{\alpha}\right], \qquad\text{for all}\quad r\in (2, R/2).
\end{equation}
\end{theorem}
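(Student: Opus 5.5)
We plan an improvement-of-flatness scheme on dyadic annuli, driven by compactness and a linearized problem. Rescale so the annulus at scale $r$ becomes $B_2\setminus \overline{B_{1/2}}$. The quantity to improve is the shifted height $H(u,r)$, normalized as $\varepsilon_r := H(u,r)/r$, which by hypothesis is at most $\eta$ at every intermediate scale. The linearized problem in the flat regime is the Neumann problem for harmonic functions on the half-space, $\Delta w = 0$ in $\{x_n>0\}$ and $\partial_n w = 0$ on $\{x_n = 0\}$. After even reflection, $w$ is harmonic in an annular region, \emph{not} in a ball. The key point is that harmonic functions in the annulus $B_R\setminus B_1$ split, by Laurent/spherical-harmonic expansion, into a part regular at $0$ (decaying toward small scales like $(r/R)^k$) and a part regular at $\infty$ (decaying toward large scales like $r^{2-n-k}$). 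The degree-$0$ and degree-$1$ regular pieces are the constants (which correspond to the shift $b$) and the linear terms $x'\cdot \xi$ (which correspond to rotations of $e$). These are precisely the modes that $H(u,r)$ quotients out. The Neumann condition removes the $x_n$ modes, and the remaining modes decay at the rates $r^{2-n}$ and $(r/R)^{2}$. This leaves room for any $\alpha<1$, and it matches the claimed bound $r^{2-n-\alpha}+(r/R)^\alpha$ in \eqref{goal} relative to $\eta$.

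The first step is a \emph{harmonic approximation lemma}. If $u$ is a classical solution in $B_{2M\rho}\setminus \overline{B_{\rho/(2M)}}$ with $H_0(u,s)\le \eta s$ for all $s$ in this range, then $(u - x_n)/(\eta\rho)$, rescaled, is $\delta$-close to a solution $w$ of the linearized Neumann problem on the rescaled annulus. This should follow from the standard De Silva partial Harnack inequality, which works locally and is hence insensitive to the hole, together with Arzel\`a--Ascoli and viscosity stability of the limit problem. The centered hypothesis at all scales ensures that the direction $e$ is essentially constant across the annuli under consideration. The second step uses Step 1 to produce a \emph{multiscale decay} statement rather than a single-scale one. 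Over the range of scales $(2,R/2)$, we compare $u$ with the approximating $w$ expanded in spherical harmonics. Subtracting the best constant and linear part, interpreted as a new shift $b_r$ and a tilted direction $e_r$ with $|e_r-e_n|,|b_r|/r \lesssim \eta$, yields the bound on $H(u,r)$ at the intermediate scales. Iterating over dyadic blocks of $M$ scales, where $M$ is chosen large depending on $\alpha$ so that the Laurent rates $M^{-1}$ from each side beat $M^{-\alpha}$ with room for constants, gives geometric decay away from both ends: from the inner boundary like $r^{2-n-\alpha}$ (we measure $\varepsilon$ relative to $\eta$) and from the outer boundary like $(r/R)^\alpha$. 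This is the standard iteration, but run from both ends at once.

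The main obstacle is making the two-sided iteration consistent. The shifts and directions $(b_r,e_r)$ chosen at different scales must be compatible, with changes summable with respect to the decay, so that the errors do not accumulate. We also have to handle the fact that the outer-boundary and inner-boundary contributions are not separated at finite $M$. To deal with this, we would instead formulate an inductive claim of the form ``$\varepsilon_r \le A\eta[\,r^{2-n-\alpha}+(r/R)^\alpha]$ for all $r$'' and argue by contradiction and compactness. Suppose the worst ratio is attained at some scale $r_k$. We normalize by it, pass to a limit of the linearized problem on a possibly infinite annulus $\R^n\setminus\{0\}$ or a half-line range of scales, and obtain a limiting Neumann-harmonic $w$ whose growth is bounded by $|x|^{1-\alpha}$-type weights at $\infty$ and $|x|^{3-n-\alpha}$ weights at $0$. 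Liouville-type classification via spherical harmonics then forces $w$ to be a combination of constants and $x'$-linear functions. These are exactly the modes removed in $H$, which contradicts the normalization. Making the weights, the limits, and the admissible modes match precisely is the delicate part.
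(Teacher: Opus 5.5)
Your final formulation is essentially the paper's proof. It argues by contradiction at the scale $r_\star$ that maximizes $H(u,r)/\phi(r)$, with $\phi(r)=r^{3-n-\alpha}+r^{1+\alpha}R^{-\alpha}$, rescales by $r_\star$, passes to the half-space Neumann problem away from the origin, reflects evenly, and uses the removable singularity plus Liouville to leave only affine functions without $x_n$-component, which are absorbed into a new tilt and shift (the paper straightens the free boundary and uses a divergence-form compactness lemma where you invoke De Silva's partial Harnack, which is immaterial). When writing it out, note three things: the weight at infinity is $|x|^{1+\alpha}$, not $|x|^{1-\alpha}$, because $\phi(tr_\star)\le\psi(t)\,\phi(r_\star)$ with $\psi(t):=\max(t^{3-n-\alpha},t^{1+\alpha})$, and it is this subquadratic but superlinear growth that lets exactly the $x'$-linear modes survive; no half-line limit can occur, since \eqref{flatnesshypothesisthm} bounds the ratio by $\eta/(r^{2-n-\alpha}+(r/R)^\alpha)$, so a ratio at least $C_\star\eta$ forces $C_\star^{1/(n-2+\alpha)}\le r_\star\le R\,C_\star^{-1/\alpha}$; and the compatibility of the $(e_r,b_r)$ that you single out as the main obstacle is routine once all scales are controlled by the worst one, since comparing overlapping dyadic annuli gives (after rescaling by $r_\star$, with $\eta_\star:=H(u,r_\star)/r_\star$) $t|e_{2t}-e_t|+|b_{2t}-b_t|\lesssim \eta_\star(\psi(t)+\psi(2t))$, and the resulting geometric sums are dominated by their largest terms, so a single $(e,b)$ traps $u$ with height $\lesssim\eta_\star\psi(t)$ at every scale $t\in(\delta,1/\delta)$.
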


The proof of the result relies on the following lemma:
\begin{lemma}
\label{lem:Hu1}
    Let  $n\ge3$, $\alpha\in (0,1)$, and $\varepsilon>0$. There exists $\delta = \delta(n, \eps, \alpha)>0$ such that the following holds.  

    Define $\psi(t):= \max(t^{3-n-\alpha}, t^{1+\alpha})$. Let $u$ be a classical solution to the Bernoulli problem in $D = B_{1/\delta}\setminus B_\delta \subset \R^n$, \eqref{eq:classical_Bernoulli}, satisfying,   for some $e\in \mathbb{S}^{n-1}$, $|b|<\delta$, and $0< \eta\le \delta^{2n}$,
    \begin{equation}\label{growth00}
        H_b(u, e, t) \le \eta\, \psi(t) \qquad \text{for all} \quad  2\delta <t<\tfrac 1{2\delta}.
    \end{equation}
      
    Then
    \[
        H(u,1) \le \varepsilon  \eta.
    \]
\end{lemma}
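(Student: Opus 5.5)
The plan is to argue by contradiction and compactness, linearizing around the half-space solution $(e\cdot x-b)_+$ in the spirit of De Silva's improvement of flatness.

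\textbf{Setting up the contradiction.} Suppose the lemma fails for some $\eps>0$. Then there are $\delta_k\to 0$, solutions $u_k$ in $B_{1/\delta_k}\setminus B_{\delta_k}$, directions $e_k$ (after rotation, $e_k=e_n$), shifts $|b_k|<\delta_k$, and $\eta_k\le\delta_k^{2n}$ such that \eqref{growth00} holds with $H(u_k,1)>\eps\eta_k$. We define the normalized vertical deviations
\[
w_k(x) := \frac{u_k(x)-(x_n-b_k)}{\eta_k}\quad\text{on } \overline{\{u_k>0\}}.
\]
By \eqref{growth00}, $|w_k(x)|\le \psi(|x|)$, up to a constant, on each annulus $B_{2t}\setminus B_{t/2}$. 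Since $\eta_k\psi(t)/t\to0$ uniformly for $t\in(2\delta_k,1/(2\delta_k))$ (this is where $\eta_k\le\delta_k^{2n}$ enters: $\psi(t)/t\le \delta_k^{-(n-2+\alpha)}$ at worst, so $\eta_k\psi(t)/t\le\delta_k^{n+2-\alpha}\to0$), each annulus is flat in the De Silva sense at its own scale. De Silva's partial Harnack inequality applied at scale $t$ then gives uniform Hölder modulus for $w_k$ on compact subsets of $\{x_n\ge0\}\setminus\{0\}$. Using Arzel\`a--Ascoli along the graphs, we extract a limit $w_k\to w$ locally uniformly on $\overline{\{x_n>0\}}\setminus\{0\}$, where $w$ solves the linearized problem
\[
\Delta w=0 \ \text{in } \{x_n>0\},\qquad \partial_n w=0 \ \text{on } \{x_n=0\}\setminus\{0\},\qquad |w(x)|\le C\psi(|x|).
\]

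\textbf{Classifying the limit.} Even reflection across $\{x_n=0\}$ makes $w$ harmonic in $\R^n\setminus\{0\}$, with $|w|\le C\max(|x|^{3-n-\alpha},|x|^{1+\alpha})$. We expand in spherical harmonics, $w=\sum_\ell (a_\ell r^\ell + b_\ell r^{2-n-\ell})Y_\ell$. The growth at infinity kills every $a_\ell$ with $\ell\ge2$, since $1+\alpha<2$. The growth at the origin kills every $b_\ell$ with $2-n-\ell<3-n-\alpha$, that is, $\ell\ge1$ (note $\ell>\alpha-1$ already holds at $\ell=1$, because $1-n<3-n-\alpha$). Only the singular mode $b_0 |x|^{2-n}$ survives. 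Evenness in $x_n$ forces the linear part $a_1 Y_1$ to be $a_0'+ \sum_{i<n}a_ix_i$. Hence
\[
w = a_0 + \sum_{i<n} a_i x_i + b_0|x|^{2-n}.
\]
Here the mode $b_0|x|^{2-n}$ is not of the form tilt plus shift, so its $C^0$ size on $B_2\setminus B_{1/2}$ has to be controlled. The key observation is that on the fixed annulus $B_2\setminus B_{1/2}$, $H(u,1)$ only asks for trapping of $u$ by the translated and tilted half-planes themselves. So we need a sharper statement: the $|x|^{2-n}$ mode must vanish. To obtain this, we use the flux, i.e. the divergence theorem for the harmonic $u_k$ on $\{u_k>0\}\cap(B_t\setminus B_s)$ with $|\nabla u_k|=1$ on the free boundary. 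This gives
\[
\int_{\partial B_t\cap\{u_k>0\}}\partial_r u_k-\int_{\partial B_s\cap\{u_k>0\}}\partial_r u_k = -\mathcal H^{n-1}(\FB(u_k)\cap(B_t\setminus B_s)),
\]
and the linearized version of this identity should rule out the nonzero $b_0$. If this flux bookkeeping fails to force $b_0=0$, the fallback is to exploit that $\eps$ is fixed while the statement quantifies over small $\delta$: choosing $\psi$ with exponent $3-n-\alpha$ is exactly meant to exclude $|x|^{2-n}$, since $|x|^{2-n}\gg|x|^{3-n-\alpha}$ as $x\to0$. So $b_0=0$ follows directly from the growth bound near $0$. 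This is in fact the cleaner route, and I would use it: the growth bound near the origin already eliminates all singular modes.

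\textbf{Concluding.} Thus $w=a_0+a'\cdot x'$ with $|a_0|+|a'|\le C$. Tilting $e_n$ to $e_k'=(e_n+\eta_k a')/|e_n+\eta_k a'|$ and shifting $b$ by $-\eta_k a_0$, the uniform convergence $w_k\to w$ on $\overline{B_2\setminus B_{1/2}}$ gives $H(u_k,1)=o(\eta_k)$, which contradicts $H(u_k,1)>\eps\eta_k$. The main obstacle is the compactness step: the uniform Harnack/Hölder estimates must hold on every annulus down to $\delta_k$ and up to $1/\delta_k$. This requires the flatness at each scale $t$ to lie below De Silva's threshold, which the condition $\eta\le\delta^{2n}$ guarantees. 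It also requires the convergence to hold up to the free boundary, which is handled by the standard viscosity argument for the Neumann condition.
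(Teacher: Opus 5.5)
Your proof is correct, and its skeleton matches the paper's. You normalize the deviation $(u-x_n+b)/\eta$ and pass to a limit that is harmonic in the punctured half-space, with zero Neumann data and $|w|\le C\psi(|x|)$. You then reflect evenly, use the exponent $3-n-\alpha$ to remove the singularity at the origin, use Liouville with growth $|x|^{1+\alpha}$ to get an affine function even in $x_n$, and read this back as a tilt plus a shift.

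Where you differ is in how compactness is obtained.

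\textbf{Your route.} You rerun De Silva's argument directly:
\begin{itemize}
\item the partial Harnack inequality at each scale for the locally normalized quantity $w_k/\psi(t)$, whose flatness $\eta_k\psi(t)/t$ tends to zero uniformly;
\item Hausdorff convergence of graphs;
\item a viscosity Neumann condition in the limit, which then needs the regularity of viscosity solutions of the linearized problem before you can reflect.
\end{itemize}

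\textbf{The paper's route.} It uses De Silva's improvement of flatness plus higher-order free boundary regularity as a black box. This gives $\FB(u)=\{x_n=b+\eta g(x')\}$ with $|g|/t+|\nabla g|+t|D^2g|\le C\psi(t)/t$. From $|1-e_1\cdot e_2|=\tfrac12|e_1-e_2|^2$ it gets a quadratically small Neumann datum $|\partial_{\tilde e}(u-x_n)|\le C(\eta\psi(t)/t)^2$. It then flattens the boundary and reduces to the purely linear conormal compactness of Lemma~\ref{lem:compactness}.

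\textbf{What each buys.} Your route avoids Kinderlehrer--Nirenberg-type estimates and the change of variables. The cost is that you must check the viscosity compactness is uniform across annuli with scale-dependent flatness. The paper's route is more modular, and the same graph-plus-small-Neumann-datum structure reappears in Lemma~\ref{lem:perturbativecompactness}.

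\textbf{One correction.} In your spherical-harmonic step, $2-n-\ell<3-n-\alpha$ is equivalent to $\ell>\alpha-1$, which already holds for $\ell=0$. So $b_0|x|^{2-n}$ is excluded immediately by the growth bound at the origin. Your claim that this mode survives was an arithmetic slip, and the flux discussion built on it should be deleted.
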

\begin{proof}
Assume without loss of generality $e= e_n$.

If $\delta>0$ is small enough, the classical improvement of flatness for the Bernoulli problem, \cite{desilva2011rhs}, applied at every dyadic scale and rescaled, \eqref{eq:scaling}, yields  
\[
\partial \{u>0\} =  \{x_n = b+  \eta g(x')\} \qquad   \mbox{in}\quad     (B_{\frac{1}{4\delta}}\setminus \overline{B_{4\delta}}),
\]
where we denote $x = (x', x_n)\in \R^{n-1}\times \R$ and
\[
\sup_{B'_{2t}\setminus B'_{t/2}} |g|/t+ |\nabla g| + t|D^2 g| \le C \psi(t)/t
\]
for $8\delta<t< \frac 1 {8\delta}$ and some $C$ dimensional, and where $B'_r$ denotes the ball of radius $r$ in $\R^{n-1}$. We have used here that, since $\eta\le \delta^{n-1+\alpha}$,  $\eta \psi(t)/t<C\delta$ for all $t\in (\delta, 1/\delta)$, $\delta > 0$ is small enough depending only on $n$, and the linear estimates for higher order regularity of the free boundary (see \cite{Kinderlehrer-Nirenberg, Lian-Zhang} or \cite[Lemma 3.7]{CFFS25}).

In particular for any $\tilde e\in \mathbb{S}^{n-1}$ with $|\tilde e-e_n|\le C\eta\psi(t)/t$ and for all $x\in \partial\{u>0\}\cap (B_{2t} \setminus \overline{B_{t/2}})$ we have
\begin{equation}\label{flux1}
\begin{split}
|\partial_{\tilde e}(u-e_n \cdot x)|   & = |\tilde e \cdot \nabla u(x) - \tilde e \cdot e_n| \le  |1-\tilde e \cdot \nabla u(x)| + |1 - \tilde e \cdot e_n|  \\
& = \tfrac12  |\tilde e - \nabla u(x)|^2 + \tfrac12 | \tilde e - e_n|^2   \le C(\eta\psi(t)/t)^2,
\end{split}
\end{equation}
where we have used here that $|1-e_1\cdot e_2| = \tfrac12 |e_1-e_2|^2$ for unit vectors $e_1, e_2\in \mathbb{S}^{n-1}$, the triangle inequality, and the fact that $|\nabla u  - e_n|\le C \eta \psi(t) / t$ on the free boundary in $B_{2t}\setminus \overline{B_{t/2}}$.

Thus, in the coordinates 
\[ y' = x' , \quad y_n = x_n - \eta g(x')-b\]
the function
\[
    v(y):= \frac{u(x(y))- x_n(y)+b}{\eta}
\] 
solves a divergence form equation with Lipschitz coefficients arbitrarily close to the identity in $\{y_n>0\}$, 
\[
{\rm div}_y\left( A(y) \nabla_y v\right) = 0\quad\text{in}\quad B_{\frac{1}{8\delta}}\setminus B_{8\delta}\cap\{y_n > 0\} \qquad \text{where}\quad A(y) :=  \begin{pmatrix}{\rm Id}_{n-1} & -\eta \nabla g(y') \\ -\eta \nabla^\top g(y') & 1+\eta^2|\nabla g(y')|^2\end{pmatrix}.
\]
Moreover using \eqref{flux1} and by assumption \eqref{growth00} we have, for $\delta > 0$ small enough and since $\eta \le \delta^{2n}$, 
\[
\left\{
\begin{array}{rcll}
|v(y)| & \le&  C\psi(|y|) & \qquad\text{in}\quad B_{\frac{1}{8\delta}}\setminus B_{8\delta}\cap \{y_n > 0\},\\
|\partial_{y_n} v(y)| & \le & C \eta  (\psi(|y|)/|y|)^2\le  \delta & \qquad\text{on}\quad B_{\frac{1}{8\delta}}\setminus B_{8\delta}\cap \{y_n = 0\}.
\end{array}
\right.
\]

Now, up to redefining $\delta$ small enough and applying a standard compactness result (see  Lemma~\ref{lem:compactness} below), we have 
\[
    |v(y)  - a \cdot y - d |<\varepsilon/4 \qquad\text{in}\quad (B_4\setminus \overline{B_{1/4}})\cap \{y_n > 0\}, 
\]
for some universally bounded $a\in \R^n$ and $d\in \R$ with $a_n = 0$. In terms of $u$ we have 
\[
|u(x) - x_n + b-\eta \bar a \cdot x-\eta \bar d |\le \eta \eps/2\qquad\text{in}\quad \{u > 0\} \cap (B_2\setminus \overline{B_{1/2}}) 
\]
for some universally bounded $\bar a\in \R^n$ and $\bar d$ with $\bar a \cdot e_n = 0$. That is, $||e_n +\eta \bar a|-1|\le C \eta^2$, and absorbing this term on the right-hand side we obtain the desired result. 
\end{proof}

We have used the following standard compactness lemma:
\begin{lemma}
\label{lem:compactness}
    Let $n \ge 3$ and $\alpha \in (0, 1)$. For any $\eps > 0$ there exists $\delta \in(0,1/4)$ depending only on $n$, $\alpha$, and $\eps$ such that the following holds. 
    
    Define $\psi(t):= \max(t^{3-n-\alpha}, t^{1+\alpha})$. Let $v\in C^{\infty}(\overline{\{x_n > 0\}}\cap (B_{1/\delta}\setminus \overline{B_\delta}))$ satisfy 
    \[
    \left\{
    \begin{array}{rcll}
    {\rm div}(A(x) \nabla v) & = & 0,&\qquad \text{in}\quad \{x_n > 0\}\cap (B_{1/\delta}\setminus \overline{B_\delta}),\\
    |\partial_n v| & \le & \delta & \qquad \text{on}\quad \{x_n = 0\}\cap (B_{1/\delta}\setminus \overline{B_\delta}), 
    \end{array}
    \right.
    \]
       
    \[
    \sup_{x\in (B_{1/\delta}\setminus B_\delta)\cap \{x_n \ge 0\}}\|A(x) - {\rm Id}\|_2 \le \delta, \qquad \text{and}\qquad |v(x)| \le \psi(|x|) \quad \text{for}\quad x\in (B_{1/\delta}\setminus \overline{B_\delta})\cap \{x_n \ge 0\}.
    \]
    Then, there are $a\in \R^n$ and $b\in \R$ such that 
    \[
    |v(x) - a\cdot x - b|\le \eps\qquad\text{in}\quad (B_4\setminus \overline{B_{1/4}})\cap \{x_n \ge 0\},
    \]
    where $a_n = 0$ and $|a|+|b|\le C(n)$.
\end{lemma}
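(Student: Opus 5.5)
We argue by contradiction and compactness. Suppose that for some $\eps>0$ there is a sequence $\delta_k\downarrow 0$ together with functions $v_k$ and matrices $A_k$ satisfying the hypotheses with $\delta=\delta_k$. Assume further that for every $a\in\R^n$ with $a_n=0$, every $b\in\R$ with $|a|+|b|\le C(n)$, and a constant $C(n)$ to be fixed, we have $\sup_{(B_4\setminus \overline{B_{1/4}})\cap\{x_n\ge0\}}|v_k-a\cdot x-b|>\eps$.

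\textbf{Step 1: compactness.} The bound $|v_k|\le\psi(|x|)$ holds uniformly on compact subsets of $\overline{\{x_n>0\}}\setminus\{0\}$. The coefficients are uniformly elliptic, and the conormal derivative $e_n\cdot A_k\nabla v_k$ is small on $\{x_n=0\}$: it differs from $\partial_n v_k$ by at most $\delta_k|\nabla v_k|$, and $\partial_n v_k$ itself is bounded by $\delta_k$. By De Giorgi--Nash--Moser boundary estimates for the Neumann problem, the $v_k$ are therefore locally uniformly $C^{0,\beta}$ up to $\{x_n=0\}$ away from the origin. We will need the estimates in this form, since the $A_k$ are only known to be close to $\mathrm{Id}$ and not uniformly Lipschitz. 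By Arzel\`a--Ascoli we may extract $v_k\to v$ locally uniformly on $\overline{\{x_n>0\}}\setminus\{0\}$. Passing to the limit in the weak formulation, with test functions not vanishing on $\{x_n=0\}$, shows that $v$ is harmonic in $\{x_n>0\}$ with $\partial_n v=0$ weakly on $\{x_n=0\}\setminus\{0\}$. Moreover $|v|\le\psi(|x|)$.

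\textbf{Step 2: Liouville-type classification.} Reflect $v$ evenly across $\{x_n=0\}$. The reflected function is harmonic in $\R^n\setminus\{0\}$ and satisfies $|v|\le |x|^{3-n-\alpha}$ near $0$ and $|v|\le|x|^{1+\alpha}$ near $\infty$.

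Near the origin, $|x|^{3-n-\alpha}=o(|x|^{1-n})$, so the singularity is weaker than that of the derivatives of the fundamental solution. Expanding in spherical harmonics, the component of degree $\ell$ is $a_\ell|x|^\ell+b_\ell|x|^{2-n-\ell}$ times the harmonic. The bound near $0$ forces $b_\ell=0$ whenever $2-n-\ell<3-n-\alpha$, that is, for all $\ell\ge 0$. Here it matters that $\alpha>0$: it excludes $\ell=1$, and for $\ell=0$ we have $2-n<3-n-\alpha$, so the fundamental solution is excluded as well. The bound near $\infty$ forces $a_\ell=0$ for $\ell\ge2$, since $\alpha<1$.

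Hence $v=a\cdot x+b$. Evenness in $x_n$ gives $a_n=0$. Evaluating the bound $|v|\le\psi$ on $|x|=1$ gives $|a|+|b|\le C(n)$.

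\textbf{Step 3: contradiction.} The convergence is uniform on $(B_4\setminus\overline{B_{1/4}})\cap\{x_n\ge0\}$, so for large $k$ we get $|v_k-a\cdot x-b|<\eps$ there, contradicting the assumption.

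\textbf{Main obstacle.} The delicate point is Step 1: obtaining equicontinuity up to the flat boundary under only smallness of the coefficients and of the Neumann data. I would handle this via De Giorgi boundary estimates for conormal problems, transferring the error term $\delta_k|\nabla v_k|$ using Caccioppoli's inequality. The alternative is to simply quote the standard theory.

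The rigidity in Step 2 is elementary but relies essentially on the exponents $3-n-\alpha$ and $1+\alpha$. These are exactly what excludes both the fundamental solution and the quadratic harmonics.
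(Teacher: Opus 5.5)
Your overall route matches the paper's: contradiction and compactness, even reflection of the limit, removable singularity, Liouville. Step~2 is a correct spherical-harmonics version of the last two steps. Its side remark is off, though. The $\ell=1$ singular terms $x_i|x|^{-n}$ are excluded for every $\alpha<2$; it is $\alpha<1$ that excludes the $\ell=0$ term $|x|^{2-n}$; and $\alpha>0$ is never used.

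\textbf{The gap is in Step~1.} It sits exactly where you flag it, and the repair you propose does not close it.

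With only $\sup_x\|A_k(x)-\mathrm{Id}\|_2\le\delta_k$, the conormal datum
$h_k:=e_n\cdot A_k\nabla v_k=\partial_n v_k+e_n\cdot(A_k-\mathrm{Id})\nabla v_k$
is bounded on $\{x_n=0\}$ only by $\delta_k(1+|\nabla v_k|)$. Nothing you have controls the \emph{boundary trace} of $|\nabla v_k|$ uniformly in $k$:
\begin{itemize}
\item De Giorgi--Nash--Moser yields at best H\"older bounds, and Caccioppoli yields bulk $L^2$ bounds. Neither sees $\nabla v_k$ on $\{x_n=0\}$, and both need $h_k$ controlled to begin with.
\item Your transfer via Caccioppoli is circular. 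Testing with $\phi^2 v_k$ up to the flat boundary produces exactly the term $\int_{\{x_n=0\}}\phi^2 v_k h_k$.
\end{itemize}
So two things are unjustified: the uniform boundary H\"older bound (your ``therefore''), and $\int_{\{x_n=0\}}\phi\, h_k\to0$, which you need for the weak Neumann condition of the limit. Deliberately giving up Lipschitz information on $A_k$ is precisely what breaks the step.

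\textbf{How to fix it.} Use the regularity of $A$ that is available where the lemma is applied. This is also what makes the paper's appeal to standard conormal estimates legitimate there. In the application $A$ is built from $\eta\nabla g$, so $\|A-\mathrm{Id}\|_{C^{0,1}}$ is small.

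For instance, let $S_k$ be the antisymmetric matrix with $(S_k)_{nj}=-(A_k)_{nj}$ and $(S_k)_{jn}=(A_k)_{nj}$ for $j<n$, and all other entries zero. Set $\tilde A_k:=A_k+S_k$. Then:
\begin{itemize}
\item $\tilde A_k$ is still uniformly elliptic, since $\xi\cdot\tilde A_k\xi=\xi\cdot A_k\xi$.
\item Its last row is $(0,\dots,0,(A_k)_{nn})$, so its conormal derivative on $\{x_n=0\}$ is $(A_k)_{nn}\partial_n v_k=O(\delta_k)$.
\item Since $\sum_{i,j}(S_k)_{ij}\partial_{ij}v_k=0$, the equation becomes $\mathrm{div}(\tilde A_k\nabla v_k)=\sum_{i,j}\partial_i(S_k)_{ij}\,\partial_j v_k$. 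This is a drift whose coefficient is bounded by $C\|\nabla A_k\|_\infty$, which tends to zero in the application.
\end{itemize}
Boundary De Giorgi--Nash--Moser and Caccioppoli estimates then hold uniformly. The limit is harmonic with $\partial_n v=0$ on $\{x_n=0\}\setminus\{0\}$, and the rest of your argument goes through.

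Alternatively, state the smallness hypothesis directly on the conormal derivative. That quantity is also small in the application.
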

\begin{proof}
    The proof is standard by compactness. Assuming by contradiction that the statement does not hold, there is $\eps_\circ > 0$ and a sequence $v_k$ satisfying the assumptions with $\delta = 1/k$ for $k\to \infty$ such that the conclusion fails with $\eps = \eps_\circ$. 

    By standard regularity estimates and Arzel\`a-Ascoli (see, e.g., \cite{lieberman1983conormal}), $v_k$ converges locally uniformly to $v_\infty$, which satisfies: 
    \[
    \left\{
    \begin{array}{rcll}
    \Delta v_\infty& = & 0,&\qquad \text{in}\quad \{x_n > 0\},\\
    \partial_n v_\infty & = & 0 & \qquad \text{on}\quad \{x_n = 0\}\setminus \{0\}, 
    \end{array}
    \right.
    \]
    where 
    \[
    |v_\infty(x)| \le \psi(|x|) \quad \text{in}\quad\{x_n > 0\}.
    \]
    By taking the even extension across $\{x_n = 0\}$, denoted $\bar v_\infty$, we have that $\Delta \bar v_\infty = 0$ in $\R^n\setminus\{0\}$. The bound by $\psi(|x|)$ implies a removable singularity at the origin, and thus, by Liouville's theorem ($\bar v_\infty$ being subquadratic) we have $\bar v_\infty$ is affine and even in $x_n$. This contradicts the negation of the thesis.
\end{proof}
We can now give the proof of the improvement of flatness in exterior domains for the Bernoulli problem:

\begin{proof}[Proof of Theorem~\ref{thm1}]

We argue by contradiction. Let $C_\star$ be a positive constant, to be chosen large enough (depending only on $n$ and $\alpha$). Put $\phi(t):= t(t^{2-n-\alpha} +(t/R)^{\alpha})$. Define
\[
Q_\star :=  \sup_{r\in [2,R/2]}  \frac{H(u,r)}{\phi(r)},
\]
and assume for contradiction that  
\[
Q_\star \ge C_\star \eta.
\]

By continuity, there exist $r_\star\in[2,R/2]$ and $b_\star\in \R$, $e_\star\in \mathbb{S}^{n-1}$ such that
\[
H(u,r_\star) = Q_\star \phi(r_\star) \qquad\mbox{and}\qquad H(u,r_\star)= H_{b_\star}(u,e_\star,r_\star).
\]
More generally, for every $r\in [2,R/2]$ there exist $b_r\in\R$ and $e_r\in\Sph^{n-1}$  such that
\begin{equation}\label{89grqbgpargh}
    H(u,r) \le  Q_\star \phi(r)=H(u,r_\star)\frac{\phi(r)}{\phi(r_\star)} \qquad\mbox{and}\qquad H(u,r)=H_{b_r}(u,e_r, r)\le\eta r,
\end{equation}
where we have additionally used assumption \eqref{flatnesshypothesisthm} in the last inequality.

We proceed in several steps.

\noindent {\bf Step 1.} Rescaling.

Consider $\bar u := \frac{u(r_\star x)}{r_\star}$. Denoting
$$t = r/r_\star\in (2/r_\star, R/(2r_\star)),\quad \bar b_t = b_r/r_\star,\quad\mbox{and}\quad\bar e_t = e_r,$$ \eqref{89grqbgpargh} becomes (recall \eqref{eq:scaling})
\begin{equation}\label{growthctrl12}
 H_{\bar b_t}(\bar u, \bar e_t, t) \le  \frac{H(u,r_\star)}{r_\star}\frac{\phi(t r_\star)}{\phi(r_\star)} \qquad \mbox{and}\qquad H_{\bar b_t}(\bar u, \bar e_t, t)\leq\eta t.
\end{equation}

Noticing that 
\begin{equation}\label{doubling}
\frac{\phi(t r_\star)}{\phi(r_\star)} \le \max(t^{3-n-\alpha}, t^{1+\alpha}) =: \psi(t),
\end{equation}
this gives
\begin{equation}\label{doubling2}
H_{\bar b_t}(\bar u, \bar e_t, t) \le  \eta_\star \psi(t),\qquad\mbox{with}\quad \eta_\star:= \frac{H(u,r_\star)}{r_\star} \le \eta \le  \eta_0.
\end{equation}
We want to apply Lemma~\ref{lem:Hu1} to $\bar u$. Note that \eqref{flatnesshypothesisthm} implies
\begin{equation}\label{boundbr}
|b_r|< CH(u,r), \quad \mbox{thus} \quad |\bar b_t|< C\frac{H(u, tr_\star)}{r_\star} \le C\eta t.
\end{equation}
\noindent {\bf Step 2.} Coefficient comparison.

By definition of $H_{\bar b_t}(\bar u, \bar e_t, t)$, for  every $t$ as above we have:

\[
\sup_{B_{2t}\setminus  B_{t/2}}  \big|\bar u - (\bar e_t\cdot x +\bar b_t)_+ \big| \le  H_{\bar b_t}(\bar u, \bar e_t, t).
\]

Hence,  using $|\bar b_t| \le t/{100}$  (for $\eta$ small enough) and the triangle inequality we obtain 
\[
\begin{split}
t|\bar e_{2t} -\bar e_{t}| + |\bar b_{2t}-\bar b_{t}| & \le C\sup_{B_{2t}\setminus B_{t}}  \big| (\bar e_{2t}\cdot x +\bar b_{2t})_+ - (\bar e_{t}\cdot x +\bar b_{t})_+ \big|
\\
&\le C\sup_{B_{4t}\setminus B_{t}} \big| \bar u - (\bar e_{2t}\cdot x +\bar b_{2t})_+ \big| + C\sup_{B_{2t}\setminus B_{t/2}}  \big|\bar u - (\bar e_{t}\cdot x +\bar b_{t})_+ \big| 
\\
&\le C (H_{\bar b_t}(\bar u, \bar e_t, t)+ H_{\bar b_{2t}}(\bar u, \bar e_{2t}, 2t)).
\end{split}
\]

By \eqref{doubling2} we then have
\[
\begin{split}
t|\bar e_{2t} -\bar e_{t}| + |\bar b_{2t}-\bar b_{t}|\le C\eta_\star (\psi(2t)+ \psi(t)),
\end{split}
\]
which summing the geometric series yields (recall $e_\star = \bar e_1$), for $2/r_\star <t<R/(2r_\star)$,
\[
t|\bar e_{t}-  e_\star|+ |\bar b_{t}- \bar b_1| \le   C\eta_\star \psi(t).
\]
Combined with \eqref{doubling2}, this implies (recall Definition~\ref{defi:height}) that
\[
H_{\bar b_1}(\bar u, \bar e_1, t) \le \bar \eta_\star\psi(t)\qquad \text{for all}\quad t\in (2/r_\star,R/(2r_\star)), \qquad\mbox{with}\quad \bar \eta_\star:= C\eta_\star\le C \eta_0.
\]
As in the rest of the proof, here $C = C(n, \alpha)$. Moreover, $|\bar b_1| \le C\eta\le C\eta_0$ by \eqref{boundbr}.

\noindent {\bf Step 3.} Estimate for $r_\star$.

By \eqref{flatnesshypothesisthm} we know that $H(u,r)/r\le \eta$ for all $r\in (2,R/2)$, thus
\[
Q_\star \le \frac{\eta}{r_\star^{2-n-\alpha} + (r_\star/R)^{\alpha}}\le \eta\max\left\{\frac{1}{r_\star^{2-n-\alpha}},\frac{1}{(r_\star/R)^{\alpha}}\right\}.
\]
Applying this and $Q_\star\geq C_\star\eta$ twice, we find that
\begin{equation}
    \label{eq:boundonrstar}
C_\star^{\frac{1}{n+\alpha-2}}\le (Q_\star/\eta)^{\frac{1}{n+\alpha-2}}\le r_\star \le R (\eta/Q_\star)^{\frac{1}{\alpha}} \le R C_\star^{-\frac{1}{\alpha}} ,
\end{equation}
so that $r_\star\in [C_\star^{\frac{1}{n+\alpha-2}}, R C_\star^{-\frac{1}{\alpha}}]$. Since our arguments above were valid for all $t\in (2/r_\star,R/(2r_\star))$, given $\delta>0$ this includes all $t\in (\delta,\frac{1}{\delta})$ up to making $C_\star$ large enough.

\noindent {\bf Step 4.} Conclusion.

Let $\eps>0$, to be fixed. Combining all of the above, by choosing $C_\star$ large and $\eta_0$ small (depending only on $n$,  $\alpha$, and $\eps$), the setting of Lemma~\ref{lem:Hu1} is satisfied. This gives
\[
H(\bar u, 1) \le \eps\bar \eta_\star =C\eps\frac{H(u,r_\star)}{r_\star} ,
\]
or (since $\bar u(x)=\frac{u(r_\star x)}{r_\star}$, and thus $H(\bar u, 1)=\frac{H(u,r_\star)}{r_\star}$), equivalently,
$$H(u,r_\star) \le C \eps H(u,r_\star) .$$
Once again, we emphasize that $C=C(n,\alpha)$. Fixing finally $\eps$ so that $C\eps =\frac{1}{2}$, which in turn fixes the choice of $C_\star$ and $\eta_0$ in terms of $n$, and $\alpha$, we arrive at a contradiction. 
\end{proof}

As a consequence, we immediately obtain a first-order expansion at infinity of global solutions with flat free boundaries in exterior domains:

\begin{corollary}\label{cor1}
Let $n\ge 3$ and $\alpha\in (0, 1)$. There are $C = C(n, \alpha)$ and $\eta_0 = \eta_0(n, \alpha)>0$ such that the following holds:

Let $u$ be a classical solution to the Bernoulli problem, \eqref{eq:classical_Bernoulli}, in $\R^n \setminus \overline{B_1}\subset \R^n$, satisfying
\begin{equation}\label{flatnesshypothesisthm0}
    H_0(u,r) \le \eta r\qquad\text{for all}\quad r\ge 2,
\end{equation}
for some $\eta \le \eta_0$.

Then, there is $|b|\le 1/16$ such that, up to a rotation, 
\begin{equation}\label{eq:firstexpansion}
|u - x_n + b|\le \frac{C}{|x|^{n-3+\alpha}}\quad\text{in}\quad \{u > 0\}\setminus B_1.
\end{equation}
Moreover,
    \[
\{u>0\}\setminus (B_2'\times[-2,2]) = \{|x'|> 2,\,   x_n > g(x')\}\cup \{|x'|\le 2,\,   x_n > 2\}
\]
for some smooth $g : \R^{n-1}\setminus B_2' \to [-1,1]$.
\end{corollary}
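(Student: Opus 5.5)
Apply Theorem~\ref{thm1} with $R\to\infty$. The hypothesis \eqref{flatnesshypothesisthm0} gives \eqref{flatnesshypothesisthm} on $B_R\setminus\overline{B_1}$ for every $R\ge4$, with constants independent of $R$. Fix $r\ge2$ and let $R\to\infty$ in \eqref{goal}; the term $(r/R)^\alpha$ disappears, so
\[
H(u,r)\le C\eta\, r^{3-n-\alpha}\qquad\text{for all } r\ge 2.
\]
For each dyadic $r=2^k$ choose $e_k\in\Sph^{n-1}$ and $b_k\in\R$ realizing $H(u,2^k)=H_{b_k}(u,e_k,2^k)$.

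Next I compare consecutive scales, as in Step~2 of the proof of Theorem~\ref{thm1}. The annuli at scales $2^k$ and $2^{k+1}$ overlap, and there both half-plane profiles are close to $u$. Hence
\[
2^k|e_{k+1}-e_k|+|b_{k+1}-b_k|\le C\eta\, 2^{k(3-n-\alpha)}.
\]
This uses that $|b_k|\le C\eta 2^k$, which follows from the centered bound $H_0(u,2^k)\le \eta 2^k$ and is needed so that the free boundary actually meets the annulus. Since $n\ge3$, we have $3-n-\alpha<0$, so $\sum_k|b_{k+1}-b_k|<\infty$. The sum $\sum_k|e_{k+1}-e_k|\lesssim\sum_k \eta 2^{k(2-n-\alpha)}$ converges even faster. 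Therefore $e_k\to e$ and $b_k\to b$, with tail bounds
\[
|e_k-e|\le C\eta 2^{k(2-n-\alpha)},\qquad |b_k-b|\le C\eta 2^{k(3-n-\alpha)}.
\]
Rotate so that $e=e_n$. On the annulus at scale $2^k$, swapping $(e_k,b_k)$ for $(e_n,b)$ costs at most $2^k|e_k-e|+|b_k-b|\le C\eta 2^{k(3-n-\alpha)}$. This gives $|u-(x_n-b)_+|\le C|x|^{3-n-\alpha}$ for $|x|\ge2$. The bound $|u-x_n+b|\le C|x|^{3-n-\alpha}$ then holds inside $\{u>0\}$, where $x_n-b\ge -C|x|^{3-n-\alpha}$. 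The region $1<|x|<2$ is covered by enlarging $C$, using that $u$ is Lipschitz there with a universal bound; this follows from flatness and De Silva's regularity \cite{desilva2011rhs}.

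To obtain $|b|\le1/16$, first record the sign convention: $H_b$ uses $(e\cdot x-b\pm h)_+$, while Step~2 writes $(\bar e_t\cdot x+\bar b_t)_+$, so I fix the paper's convention that the profile is $(x_n-b)_+$. Then $|b_k|\le C\eta 2^k$ on its own is too weak. Instead, use the scale $r=2$: both $(e_1,b_1)$ and $(e,0)$ trap $u$ on $B_4\setminus B_1$, the first with error $\le C\eta$ and the second, $H_0$, with error $\le 2\eta$. Hence $|b_1|\le C\eta$, and so $|b|\le |b_1|+C\eta\le C\eta\le 1/16$ once $\eta_0$ is small.

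For the graph description, apply De Silva's improvement of flatness \cite{desilva2011rhs} at each annulus, as at the start of the proof of Lemma~\ref{lem:Hu1}. Since the flatness is at most $C\eta$ relative to scale, this shows that $\FB(u)\cap(B_{2r}\setminus \overline{B_{r/2}})$ is a smooth graph $x_n=g(x')$ with $|g(x')-b|\le C\eta|x'|^{3-n-\alpha}\le 1/2$, and hence $|g|\le1$. The local graphs on overlapping annuli coincide, so they patch to one smooth $g$ on $\R^{n-1}\setminus B_2'$. Finally, $\{u>0\}$ lies above the graph. In the remaining cylinder piece $\{|x'|\le2,\ |x_n|>2\}$, the trapping $(x_n-b\pm C\eta)_+$ shows that $u>0$ exactly when $x_n>2$.

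The main obstacle is step two: justifying the coefficient comparison and its uniformity in the unbounded setting, with the constant $C$ independent of $R$. I expect it to reduce directly to the estimate already used in Step~2 of the proof of Theorem~\ref{thm1}.
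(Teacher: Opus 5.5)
Your proposal is correct and follows the paper's proof essentially step by step. You let $R\to\infty$ in Theorem~\ref{thm1} to get $H(u,r)\le C\eta\, r^{3-n-\alpha}$, compare optimal half-plane profiles on consecutive dyadic annuli to obtain Cauchy sequences $e_k\to e_\infty$, $b_k\to b_\infty$ with the stated tail bounds, rotate, and invoke De Silva's improvement of flatness for the graph description; your explicit use of the centered bound $H_0(u,2)\le 2\eta$ to get $|b_1|\le C\eta$, and hence $|b|\le C\eta\le 1/16$, is a useful clarification of what the paper compresses into ``making $\eta$ smaller if necessary''.
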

\begin{proof}
    By Theorem~\ref{thm1} applied with $R  \nearrow \infty$, we know
    \[
    H(u, r) \le C\eta r^\gamma\qquad\text{for all}\quad r\ge 2,
    \]
    where we have denoted $\gamma = 3-n-\alpha < 0$. By definition, there are $e_k\in \mathbb{S}^{n-1}$ and $b_k\in \R$ such that, for any $k \in \N$,
    \[
    (e_k\cdot x - b_k - h_k)_+ \le u \le (e_k \cdot x - b_k + h_k)_+\quad\text{in}\quad B_{2^{k+1}}\setminus B_{2^{k-1}},
    \]
    where $h_k := C \eta 2^{k\gamma}$.  Moreover, by the same reasoning with $k+1$ we deduce that, in the annulus $B_{2^{k+1}}\setminus B_{2^k}\subset B_{2^{k+1}}\setminus B_{2^{k-1}}$ we also have 
    \[
    (e_{k+1}\cdot x - b_{k+1} - h_{k+1})_+ \le u \le (e_{k+1} \cdot x - b_{k+1} + h_{k+1})_+\quad\text{in}\quad B_{2^{k+1}}\setminus B_{2^{k}}.
    \]
    For $\eta$ sufficiently small, this means that the free boundary is trapped inside two different thin strips with directions $e_k$ and $e_{k+1}$, and shifts $b_k$ and $b_{k+1}$. In particular, by the triangle inequality, we must have 
    \[
    2^k |e_{k+1}- e_k| + |b_{k+1}-b_k|\le C (h_k+h_{k+1}) \le C \eta 2^{k\gamma}. 
    \]
    From here, using again the triangle inequality, we deduce 
    \[
    \sum_{i \ge k}|e_{i+1}-e_i| \le C\eta \sum_{i \ge k} 2^{i(\gamma-1)} \le C\eta 2^{k(\gamma-1)},\qquad \sum_{i \ge k}|b_{i+1}-b_i| \le C\eta \sum_{i \ge k} 2^{i\gamma} \le C\eta 2^{k\gamma},
    \]
    where now the constants also depend on $\alpha$. In particular, both sequences $\{e_k\}_{k\in \N}$ and  $\{b_k\}_{k\in \N}$ are Cauchy, and there exist some $e_\infty\in \mathbb{S}^{n-1}$ and $b_\infty \in (-1/16, 1/16)$---making $\eta$ smaller if necessary---such that 
    \[
    |e_\infty - e_k|\le  C \eta 2^{k(\gamma-1)}\quad\text{and}\quad |b_\infty - b_k|\le C \eta 2^{k\gamma}. 
    \]
    Up to a rotation bringing $e_\infty$ to $e_n$, by the triangle inequality this implies 
        \[
    (x_n - b_\infty - h_k)_+ \le u \le (x_n  - b_\infty + h_k)_+\quad\text{in}\quad B_{2^{k+1}}\setminus B_{2^{k-1}},\quad\text{for}\quad h_k = C \eta 2^{k\gamma}.
    \]
    Combining this with the classical improvement of flatness for the Bernoulli problem \cite{desilva2011rhs}, the desired result follows.
\end{proof}

\begin{remark}
    In Corollary~\ref{cor1}, we obtain a shift $b$ because the flatness \eqref{flatnesshypothesisthm0} is compatible with sufficiently small vertical translations. In fact, 
    \[
    |b|\le C \eta, 
    \]
    for some $C$ universal. That is,  $b$ can be made as small as needed by taking $\eta$ small. 
\end{remark}





\subsection{Higher-order expansion for entire solutions}\label{sec:highexp}
The goal of this subsection is to improve Corollary~\ref{cor1} to a higher-order expansion:

\begin{theorem}\label{thm:highordexp}
    Let $n\ge 3$ and $\alpha\in (0, 1)$. In the setting of Corollary~\ref{cor1}, there are $C = C(n, \alpha)$ and $\eta_0 = \eta_0(n, \alpha)>0$ as in the corollary such that, for some $v$ of the form  
\begin{equation}
    \label{eq:voftheform}
    v(x) = x_n- b+\frac{c}{|x|^{n-2}} + \frac{d\cdot x}{|x|^{n}},\quad \text{for}\quad (b, c, d) \in \R\times \R\times \R^n, \quad \text{with}\quad {d}_n=(n-2){ b}{ c},
\end{equation}
we have
    \begin{equation}\label{expansion222}
         \left| u(x) - v(x)\right|\le \frac{C}{|x|^{n-1+\alpha}}\quad \mbox{in}\quad  \{u>0\}\setminus B_2\,.
    \end{equation}
    Moreover, we also have
        \begin{equation}\label{expansion222_grad}
         \left| \nabla u(x) - \nabla v(x)\right|\le \frac{C}{|x|^{n+\alpha}}\quad \mbox{in}\quad  \{u>0\}\setminus B_2\,.
    \end{equation}
\end{theorem}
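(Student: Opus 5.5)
Starting from Corollary~\ref{cor1}, we already know that, after a rotation,
\[
|u - x_n + b|\le C|x|^{3-n-\alpha}\quad\text{in}\quad \{u>0\}\setminus B_1,
\]
and that the free boundary is a graph $\{x_n = g(x')\}$ outside a cylinder. Our plan is to straighten the free boundary and treat $w := u - x_n + b$ as a solution of a linear problem in (almost) a half-space, with a quadratically small Neumann datum.

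The first step is to upgrade the $C^0$ decay to derivative decay. The flatness at each dyadic scale is $O(|x|^{3-n-\alpha})$, and rescaled linear estimates for the free boundary (as in the proof of Lemma~\ref{lem:Hu1}) then give
\[
|\nabla g|\le C|x'|^{2-n-\alpha},\qquad |D^2g|\le C|x'|^{1-n-\alpha}.
\]
We set $y_n = x_n - g(x')$, so that $w$ becomes harmonic with respect to a metric $A = \mathrm{Id}+O(|\nabla g|)$. Arguing as in \eqref{flux1}, the boundary condition $|\nabla u|=1$ gives
\[
|\partial_\nu w| \le C|\nabla g|^2 \le C|x|^{4-2n-2\alpha}
\]
on the free boundary. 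Since $n\ge3$, this is $o(|x|^{-n-\alpha})$, which makes it negligible at the order we aim for.

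The second step produces the expansion. We reflect evenly across $\{y_n=0\}$ and extract the exterior harmonic expansion. The even extension $\bar w$ satisfies $\Delta \bar w = f$ in $\R^n\setminus B_2$. Here $f$ consists of the divergence-form error $\mathrm{div}((A-\mathrm{Id})\nabla w)$ together with a measure on $\{y_n=0\}$ of density $2\partial_n w$. Both pieces decay fast enough, roughly like $|x|^{-n-1-\alpha}$ or better once $w$ is bounded by $|x|^{3-n-\alpha}$ and $g - b = O(|x|^{3-n-\alpha})$. We then solve $\Delta z = f\chi_{\{|x|>2\}}$ by a Newtonian potential, with $|z|\le C|x|^{1-n-\alpha}$; if necessary we bootstrap the decay of $w$ first, by iterating the same argument. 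The function $\bar w - z$ is harmonic outside $B_2$, bounded, and decays. Expanding it in spherical harmonics gives
\[
\bar w - z = \frac{c}{|x|^{n-2}} + \frac{\tilde d\cdot x}{|x|^n} + O(|x|^{-n}),
\]
with no constant term because the difference decays. Evenness of $\bar w$ in $y_n$ forces the dipole in the $y$ variables to have $\tilde d_n=0$.

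The third step returns to the $x$ variables and pins down $d_n$. Since $y_n = x_n - g(x') = x_n - b + O(|x|^{3-n-\alpha})$, substituting gives
\[
\frac{c}{|y|^{n-2}} = \frac{c}{|x|^{n-2}} + \frac{(n-2)bc\,x_n}{|x|^n} + \dots,
\]
which produces exactly $d_n=(n-2)bc$. A subtle point is that the $O(|x|^{3-n-\alpha})$ error in $g$ must itself be refined first. Since $g=b-w$ on the free boundary, applying the expansion of $w$ gives $g = b - c|x'|^{2-n}+O(|x|^{1-n})$. This refinement is fed back into the change of variables to identify the correction. Gradient estimate \eqref{expansion222_grad} then follows from \eqref{expansion222} by rescaled interior and boundary Schauder estimates on dyadic annuli.

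The main obstacle is the bootstrap: getting from $|x|^{3-n-\alpha}$ to $|x|^{1-n-\alpha}$ with sharp control of the nonlinear and coordinate-change errors, while keeping the $x$ versus $y$ bookkeeping consistent. We would try first to iterate the weighted decay: each pass through the linear exterior problem improves the exponent until it saturates at the dipole order.
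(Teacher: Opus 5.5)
Your linear route (flatten, reflect, take a Newtonian potential, read off the exterior multipole expansion) is genuinely different from the paper's, and it can be completed. However, your decay count fails in the main case $n=3$, so the bootstrap you postpone is not optional there. From Corollary~\ref{cor1} alone you only have $|\nabla g|+|\nabla w|\lesssim|x|^{2-n-\alpha}$. Hence the flux $\sqrt{1+|\nabla g|^2}-1=O(|x|^{4-2n-2\alpha})$ and the volume source $O(|x|^{3-2n-2\alpha})$ beat $|x|^{-n-\alpha}$, resp.\ $|x|^{-n-1-\alpha}$, only if $n>4-\alpha$. For $n=3$ neither source has a finite first moment, so no dipole can be extracted in a single pass.

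What saves you is that both sources are still integrable whenever $n>3-2\alpha$. The potential of $\Delta(\phi\bar w)$ is then $m|y|^{2-n}+o(|y|^{2-n})$, and it equals $\phi\bar w$ by Liouville, so $|w|\lesssim|x|^{2-n}$. Rescaled free-boundary regularity then gives $|\nabla g|+|\nabla w|\lesssim|x|^{1-n}$. After that, the sources are $O(|x|^{2-2n})$ on $\{y_n=0\}$ and $O(|x|^{1-2n})$ in the volume. Both have finite first moments and leave a remainder $O(|x|^{-n}\log|x|)\le C|x|^{1-n-\alpha}$.

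Three smaller points. First, $z$ is not $O(|x|^{1-n-\alpha})$ by itself: its monopole and dipole moments must be moved into $c$ and $\tilde d$, and evenness keeps $\tilde d_n=0$. Second, your refinement of $g$ in the third step is unnecessary, since $|x|^{1-n}|g-b|\lesssim|x|^{4-2n-\alpha}\le|x|^{1-n-\alpha}$ already. Third, \eqref{expansion222_grad} does not follow from \eqref{expansion222} plus Schauder without a boundary condition for $u-v$ on the free boundary. One option is $\partial_\nu(u-v)=\tfrac12|\nabla(u-v)|^2+\tfrac12(1-|\nabla v|^2)$, where $1-|\nabla v|^2=O(|x|^{-n-1})$ on $\{u=0\}$ precisely because $d_n=(n-2)bc$. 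The other is to run Neumann estimates for the remainder in the $y$-variables.

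The paper, by contrast, never writes a representation formula. It compares $u$ at every scale with the finite-dimensional family $\cA$ of Ans\"atze $x_n+\hat b+\hat c|x|^{2-n}+\hat d'\cdot x'|x|^{-n}$, centered at $b_0e_n$, through the fine height $\hat H(u,r)$. It then assumes a bad scale $r_\star$ where $r\hat h_r$ exceeds $C_\star r^{1-n-\alpha}$ and argues by compactness there. The Ansatz coefficients at $r_\star$ are $O(r_\star^{-\beta}\hat h_{r_\star}^{1/2})$, so Lemma~\ref{lem:nablav} makes the Neumann datum of $(u_{r_\star}-\hat v_\star)/\hat h_{r_\star}$ small. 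With growth control across scales, the normalized error converges to an even harmonic function on $\R^n\setminus\{0\}$ that is again of Ansatz form, contradicting optimality. The horizontal dipole and $d_n=(n-2)bc$ come out exactly as in your sketch, from evenness in the centered frame and undoing the shift.

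The paper's argument needs only qualitative compactness and a Liouville theorem, and it reuses the machinery of Theorem~\ref{thm1}. It is therefore insensitive to dimension-dependent decay bookkeeping and to potential estimates for measures on hyperplanes. Your approach is more explicit, since the coefficients appear as moments of the source, and it is purely linear. The cost is exactly that bookkeeping, which is where the $n=3$ slip occurred.
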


We now turn to the proof of Theorem~\ref{thm:highordexp}. 
The starting point is Corollary~\ref{cor1}, which gives $b_0$ with $|b_0|\le 1/16$ such that
\begin{equation}\label{initial_aprox}
\big|u-x_n+b_0\big|\le \frac{C}{|x|^{n-3+\alpha}}
\qquad \mbox{in } \{u>0\}\setminus B_1 .
\end{equation}
We next introduce the finite-dimensional family of harmonic profiles used to improve this approximation.

\begin{definition}[Family of Ans\"atze]\label{defi:Ansatz}
Let $n\ge 3$. We define the class of {\em approximating Ans\"atze}, denoted $\cA( \kappa )$, as the class of all functions of the form
\begin{equation}\label{Ansatz1}
{\hat v}(x)=x_n+\hat b + \frac{\hat c}{|x|^{n-2}}+\frac{\hat d' \cdot x'}{|x|^n},
\end{equation}
where  $(\hat b, \hat c, \hat d') \in \R\times \R\times \R^{n-1}$, 
such that 
\begin{equation}\label{Ansatz2}
\max\big( |\hat b|, |\hat c|, |\hat d'|\big)\le \kappa.
\end{equation}
For a given ${\hat v}$ of the form \eqref{Ansatz1}, we denote ${\hat v} = {\hat v}[\hat b, \hat c, \hat d']$. 
\end{definition}

We notice that any ${\hat v}\in \cA(\kappa)$ is harmonic in $\R^n \setminus \{0\}$. Besides being harmonic, the key property that Ans\"atze satisfy is: 
\begin{lemma}
\label{lem:nablav}
Let $n \ge 3$ and $\kappa > 0$. For any ${\hat v}\in \cA(\kappa)$ we have 
\[
    \big| |\nabla {\hat v}|^2 -1 \big| \le C\kappa^2  \quad \mbox{in}\quad  \{ |{\hat v}|<\kappa\}\cap ( B_4\setminus B_{1/4}), 
\]
where $C$ depends only on $n$.
\end{lemma}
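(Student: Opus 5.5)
The plan is a direct computation. Write $\hat v = x_n + w$, where
\[
w(x) := \hat b + \frac{\hat c}{|x|^{n-2}} + \frac{\hat d'\cdot x'}{|x|^n}.
\]
Then $\nabla \hat v = e_n + \nabla w$, so
\[
|\nabla \hat v|^2 - 1 = 2\,\partial_n w + |\nabla w|^2 .
\]
On the annulus $B_4\setminus B_{1/4}$ all negative powers of $|x|$ are bounded by dimensional constants. Hence $|\nabla w|\le C\kappa$ there, and the quadratic term is at most $C\kappa^2$. Everything therefore reduces to showing $|\partial_n w|\le C\kappa^2$ on the set $\{|\hat v|<\kappa\}\cap(B_4\setminus B_{1/4})$.

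Compute $\partial_n w$ explicitly. We have $\partial_n |x|^{2-n} = (2-n)x_n|x|^{-n}$ and $\partial_n(\hat d'\cdot x'\,|x|^{-n}) = -n(\hat d'\cdot x')\,x_n|x|^{-n-2}$. The constant $\hat b$ does not contribute. So
\[
\partial_n w = x_n\Big(\frac{(2-n)\hat c}{|x|^n} - \frac{n\,\hat d'\cdot x'}{|x|^{n+2}}\Big),
\]
and on the annulus $|\partial_n w|\le C\kappa\,|x_n|$. The key structural point is that every non-constant term of the Ansatz has zero $x_n$-derivative on $\{x_n=0\}$. This is exactly why the $d$-term in $\cA(\kappa)$ was restricted to $\hat d'\cdot x'$, with no $\hat d_n x_n$ component.

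It remains to bound $|x_n|$ on $\{|\hat v|<\kappa\}$. There, $|x_n| = |\hat v - w| \le \kappa + |w| \le \kappa + 3\kappa\cdot C \le C\kappa$, since $|w|\le C\kappa$ on the annulus. Combining, $|\partial_n w|\le C\kappa^2$, and hence $\big||\nabla\hat v|^2-1\big|\le C\kappa^2$ with $C=C(n)$.

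There is no real obstacle in this argument. The only step needing care is the observation that $\partial_n w$ carries the factor $x_n$. Without it, the linear term would only be $O(\kappa)$, not $O(\kappa^2)$.
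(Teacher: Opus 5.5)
Your proof is correct and follows essentially the same route as the paper: expand $|\nabla\hat v|^2$, observe that the linear cross terms carry a factor $x_n$, and use $|x_n|\le C\kappa$ on $\{|\hat v|<\kappa\}\cap(B_4\setminus B_{1/4})$ to make them $O(\kappa^2)$. The paper phrases the last step as replacing $x_n$ by $-\hat b$ up to $O(\kappa)$ errors, which amounts to the same estimate.
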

\begin{proof}
We first notice that 
\begin{equation}\label{asymp1}
    |x_n|  \le C(|\hat b|+|\hat c| + |\hat d'| + t)  \quad \mbox{in}\quad \{|{\hat v}|<t\}\cap ( B_4\setminus B_{1/4}).
\end{equation}

By an explicit computation 
\[
 \nabla {\hat v} =  e_n + {\hat c} (2-n) \frac{x}{|x|^n}  + \frac{({\hat d}' ,0)}{|x|^n } - n \frac{({\hat d'}\cdot x') x }{|x|^{n+2}}
\]
Hence 
\[
\begin{split}
     \nabla {\hat v}\cdot \nabla {\hat v} &= 1 + 2{\hat c} (2-n) \frac{e_n\cdot x}{|x|^n} -2n \frac{({\hat d'}\cdot x') (e_n\cdot x) }{|x|^{n+2}} + O\left( \kappa^2 \right). \\
\end{split}
\]
But then, using \eqref{asymp1}, in $\{|{\hat v}|< \kappa\}\cap  (B_4\setminus B_{1/4})$ we can replace $e_n\cdot x = x_n$ by $-\hat b$ up to introducing new error terms of size $O(\kappa)$.  By \eqref{Ansatz2}, the lemma follows. 
\end{proof}

The second lemma we need is as follows, where from now on we denote
\begin{equation}
\label{eq:rescalingb0}
u_r (x) :=   \frac{1}{r} u(rx + b_0 e_n),\quad\text{where $b_0$ is given by \eqref{initial_aprox}.}
\end{equation}
  Alternatively, the reader may choose to think that after a translation, $b_0= 0$, and \eqref{eq:rescalingb0} is the usual Bernoulli rescaling. Such a rescaling will be undone in the last part of the last step of the proof of Theorem~\ref{thm:highordexp}, in \eqref{eq:laststepproof}. 
\begin{lemma}\label{lem:theta}
     Let $u$ be as in Theorem~\ref{thm:highordexp}. 
    Let $\theta\in (0,1)$ and ${\hat v}\in \cA(1/8)$ . Suppose that, for some $r\ge 4$ and $0<\hat h\le  1/8$, we have (recall \eqref{eq:rescalingb0}) \begin{equation}\label{setup123}
        |u_r-{\hat v}| \le  {\hat h}  \quad \mbox{in}\quad   \{u_r>0\} \cap \big(B_{4}\setminus B_{1/4}\big)\neq \varnothing.
    \end{equation}
 Then, 
    \[
      |\nabla (u_r-{\hat v})| \le C {\hat h}^\theta  \quad \mbox{in}\quad  \{u_r>0\} \cap \big(B_{2}\setminus B_{1/2}\big),
    \]
    where $C$ depends only on $n$ and $\theta$. 
\end{lemma}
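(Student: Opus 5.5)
We prove the gradient estimate by flattening the free boundary and then interpolating between the $C^0$ smallness \eqref{setup123} and a uniform $C^{1,\beta}$ bound on $w:=u_r-\hat v$.

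\textbf{Step 1: a priori regularity of $u_r$.} By Corollary~\ref{cor1} and $r\ge4$, $u_r$ is uniformly flat in $B_8\setminus B_{1/8}$, with flatness of order $r^{2-n-\alpha}\le C$ small. The classical improvement of flatness \cite{desilva2011rhs}, together with the higher-order estimates \cite{Kinderlehrer-Nirenberg, Lian-Zhang}, then gives the following. The free boundary $\partial\{u_r>0\}\cap(B_{7/2}\setminus B_{2/7})$ is a graph $\{x_n=g_r(x')\}$ with $\|g_r\|_{C^{2,\beta}}\le C$. Moreover $\|u_r\|_{C^{2,\beta}(\overline{\{u_r>0\}}\cap(B_{3}\setminus B_{1/3}))}\le C$, with $C$ depending only on $n$ and $\beta$.

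Since $\hat v\in\cA(1/8)$ is harmonic and smooth in $B_4\setminus B_{1/4}$ with $\|\hat v\|_{C^3}\le C$, the difference $w$ is harmonic in $\{u_r>0\}\cap(B_3\setminus B_{1/3})$. It also satisfies $\|w\|_{C^{1,\beta}}\le C$ up to the free boundary.

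\textbf{Step 2: interpolation.} Flatten the boundary with $y'=x'$, $y_n=x_n-g_r(x')$. In these coordinates $w$ becomes a $C^{1,\beta}$ function on a half-annulus, and $|w|\le\hat h$ there. Fix a point $x_0$ and a direction $e$, and consider the Taylor expansion of $w$ along a segment of length $\rho$ from $x_0$, taken inside the domain (the graph structure gives an interior cone condition). This yields
\[
|\partial_e w(x_0)|\le \frac{2\hat h}{\rho}+C\rho^{\beta}.
\]
Choosing $\rho=\hat h^{1/(1+\beta)}$ gives $|\nabla w|\le C\hat h^{\beta/(1+\beta)}$.

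Since $\beta\in(0,1)$ can be taken arbitrarily close to $1$ in Step 1, the exponent $\beta/(1+\beta)$ only approaches $1/2$, not $1$. So this simple interpolation yields $\theta<1/2$, while the lemma asks for every $\theta<1$.

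\textbf{Step 3: reaching $\theta<1$.} To get $\theta$ close to $1$ we need higher-order interpolation. Steps 1 and 2 use only $C^{1,\beta}$ control; instead we take $C^{k,\beta}$ bounds on $w$ up to the free boundary, with $k$ large. These hold because the free boundary is analytic with uniform estimates in the flat regime, and $\hat v$ is smooth. The Gagliardo–Nirenberg (Landau–Kolmogorov type) interpolation on domains with uniform cone condition then gives
\[
\|\nabla w\|_{L^\infty}\le C\|w\|_{L^\infty}^{1-1/k}\|w\|_{C^k}^{1/k}\le C\hat h^{1-1/k}.
\]
Choosing $k$ with $1-1/k\ge\theta$ concludes the proof.

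\textbf{Main obstacle.} The delicate point is obtaining uniform higher-order bounds $\|w\|_{C^k}\le C(n,k)$ up to the curved free boundary on a slightly smaller annulus. The plan is to use the hodograph/partial Legendre transform of \cite{Kinderlehrer-Nirenberg}. Under flatness, this turns the free boundary problem into a nonlinear elliptic equation with oblique boundary condition on a flat domain, and Schauder iteration then gives all derivatives with constants depending only on $n$ and $k$.

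A second point to check is the nonemptiness assumption in \eqref{setup123}. It guarantees that $B_2\setminus B_{1/2}$ actually meets $\{u_r>0\}$ in a region containing both sides of the graph. It also ensures that points near the free boundary can be reached by segments of length $\rho\lesssim 1$ staying inside $\{u_r>0\}\cap(B_3\setminus B_{1/3})$, which is what the interpolation in Steps 2 and 3 requires.
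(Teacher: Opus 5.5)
Your proposal is correct and, once the superseded Step~2 is dropped, it is essentially the paper's argument: uniform $C^k$ bounds for $u_r$ up to the flat (hence smooth) free boundary, smoothness of $\hat v\in\cA(1/8)$ away from the origin, and the interpolation $\|\nabla w\|_{L^\infty}\le C\|w\|_{L^\infty}^{1-1/k}\|w\|_{C^{k-1,1}}^{1/k}$ with $k=k(\theta)$ large. The only difference is that the paper simply quotes the higher-order regularity (\cite[Proposition 3.7]{CFFS25}) rather than rederiving it through the Kinderlehrer--Nirenberg hodograph transform, so your ``main obstacle'' is already handled by the cited literature.
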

\begin{proof}
 We assume $r = 4$ and write $u$ instead of $u_4$ (as a small abuse of notation), otherwise we argue by scaling. By higher order regularity estimates for the Bernoulli problem (see, e.g., \cite[Proposition 3.7]{CFFS25})  we have that assuming small flatness, $|D^k u|\le C$ in $\{u >0\}\setminus B_{1/4}$, and $\{u > 0\}$ is a $C^k$ domain in $\R^n \setminus B_{1/4}$, for any $k \in \N$.  From the definition of ${\hat v}$, we also have $|D^k {\hat v}|\le C$ in $\R^n\setminus B_{1/4}$. The constant $C$ might depend on $k$. Hence, $u-{\hat v}$ is a small harmonic function in a smooth domain with bounded higher order estimates. By standard interpolation estimates (see, e.g., \cite[Section A.3]{FernandezRealRosOton2024}) we get
    \[
    \begin{split}
    [u-{\hat v}]_{C^{0, 1}(\{u > 0\}\cap B_2\setminus B_{1/2})} & \le C \|u-{\hat v}\|^{1-1/k}_{L^\infty(\{u > 0\}\cap B_3\setminus B_{1/3})}\|u-{\hat v}\|^{1/k}_{C^{k-1,1}(\{u > 0\}\cap B_3\setminus B_{1/3})} \\
    & \le C {\hat h}^{1-1/k} C_k^{1/k}. 
    \end{split}
    \]
    Choosing $k$ large depending on $\theta$ we get the desired result.
\end{proof}

 The following result controls the flux of the difference $u_r - {\hat v}$ along the free boundary. 

\begin{lemma}[Flux control]\label{fluxcontrol}
 Let $u$ be as in Theorem~\ref{thm:highordexp}. 
 Let $\theta\in (0,1)$, $\kappa\in (0, 1/8)$, and ${\hat v}\in \cA(\kappa)$.
Let $r \ge 4$ and assume that \eqref{setup123} holds for some $0<\hat h\le 1/8$.

Then,  recalling \eqref{eq:rescalingb0}, we have 
\[ 
|\nabla (u_r-{\hat v})\cdot \nabla u_r| \le C \big( {\hat h}^{2\theta} + \kappa^2\big) \qquad\text{on}\quad \partial \{u_r>0\}\cap (B_{2}\setminus B_{1/2})\,,
\]
for some $C$ depending only on $n$ and $\theta$.
\end{lemma}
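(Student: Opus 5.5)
On $\partial\{u_r>0\}$ we have $|\nabla u_r|=1$, so $\nabla u_r$ is a unit vector there. The plan is to rewrite the flux as
\[
\nabla(u_r-\hat v)\cdot\nabla u_r = 1-\nabla \hat v\cdot\nabla u_r .
\]
Then we use the algebraic identity for a unit vector $a$ and an arbitrary vector $w$:
\[
1-w\cdot a=\tfrac12|a-w|^2+\tfrac12\big(1-|w|^2\big),
\]
applied with $a=\nabla u_r$ and $w=\nabla\hat v$. This is the same device as in \eqref{flux1}, except that now $\nabla\hat v$ is only approximately a unit vector. It gives
\[
|\nabla(u_r-\hat v)\cdot\nabla u_r|\le \tfrac12|\nabla(u_r-\hat v)|^2+\tfrac12\big||\nabla\hat v|^2-1\big| .
\]
The two terms are then estimated separately.

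\emph{First term.} By Lemma~\ref{lem:theta}, valid since $\hat v\in\cA(\kappa)\subset\cA(1/8)$ and \eqref{setup123} holds, we get $|\nabla(u_r-\hat v)|\le C\hat h^\theta$ in $\{u_r>0\}\cap(B_2\setminus B_{1/2})$. By continuity up to the smooth free boundary, the bound persists on $\partial\{u_r>0\}\cap(B_2\setminus B_{1/2})$. Squaring gives $C\hat h^{2\theta}$.

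\emph{Second term.} We want to invoke Lemma~\ref{lem:nablav}, which bounds $||\nabla\hat v|^2-1|\le C\kappa^2$ on $\{|\hat v|<\kappa\}\cap(B_4\setminus B_{1/4})$. At a free boundary point $x$ we have $u_r(x)=0$, so \eqref{setup123} gives $|\hat v(x)|\le\hat h$. This lies in $\{|\hat v|<\kappa\}$ only if $\hat h<\kappa$, which we cannot assume; this is the main obstacle. To handle it, rerun the proof of Lemma~\ref{lem:nablav} with a general threshold $t$ in \eqref{asymp1}. On $\{|\hat v|\le t\}$ one has $x_n=-\hat b+O(\kappa+t)$. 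Expanding $|\nabla\hat v|^2$, the linear terms
\[
2\hat c(2-n)\frac{x_n}{|x|^n}-2n\frac{(\hat d'\cdot x')x_n}{|x|^{n+2}}
\]
are bounded by $C\kappa|x_n|\le C\kappa(\kappa+t)$, and the remaining terms are $O(\kappa^2)$. Hence
\[
||\nabla\hat v|^2-1|\le C(\kappa^2+\kappa t).
\]
With $t=\hat h$, Young's inequality gives $\kappa\hat h\le\kappa^2+\hat h^2\le\kappa^2+\hat h^{2\theta}$, because $\hat h\le 1$ and $\theta<1$.

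Combining the two estimates yields the claimed bound $C(\hat h^{2\theta}+\kappa^2)$, with $C$ depending only on $n$ and $\theta$. The only delicate step is this refinement of Lemma~\ref{lem:nablav}, which removes the need for $\hat h<\kappa$. Everything else is the unit-vector identity together with Lemma~\ref{lem:theta}.
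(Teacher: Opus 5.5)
Your proof is correct and follows the paper's argument: on the free boundary you use the same unit-vector identity $2-2\nabla u_r\cdot\nabla\hat v=|\nabla u_r-\nabla\hat v|^2+1-|\nabla\hat v|^2$, then Lemma~\ref{lem:theta} for the first term and (a refinement of) Lemma~\ref{lem:nablav} for the second. Your threshold refinement correctly handles a point the paper passes over, namely that free boundary points lie in $\{|\hat v|\le\hat h\}$ and not necessarily in $\{|\hat v|<\kappa\}$; an equivalent shortcut is to apply Lemma~\ref{lem:nablav} directly with $2\max\{\kappa,\hat h\}$ in place of $\kappa$, using $\cA(\kappa)\subset\cA(2\max\{\kappa,\hat h\})$ and $\hat h^2\le\hat h^{2\theta}$.
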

\begin{proof}
 As in Lemma~\ref{lem:theta}, the result  is scaling invariant, so we can prove it for $r=4$. We write $u$ instead of $u_4$.

Notice first that \eqref{setup123} implies
\[
\partial \{u>0 \} \subset \{|{\hat v}|<\hat h\} \quad \mbox{inside }B_2\setminus B_{1/2}.
\]
Since $|\nabla u|^2=1$ on $\partial \{u>0\} \cap B_4\setminus B_{1/4}$  we have
\[
  2(\nabla u \cdot \nabla (u- {\hat v})) =   2  - 2\nabla u\cdot \nabla  {\hat v} = |\nabla u-\nabla  {\hat v}|^2   +  1- |\nabla  {\hat v}|^2
\]
on $\partial \{u>0\}$. Then we apply Lemmas~\ref{lem:nablav} and \ref{lem:theta}.
\end{proof}

\begin{lemma}[Compactness]\label{lem:perturbativecompactness}
Let $n\ge 3$ and $\alpha \in  (0,1)$. For any $\eps > 0$ there exists $\delta>0$ depending only on $n$, $\alpha$, and $\eps$, such that the following holds. 

Let  $g : B'_{1/\delta}\to \R$ with $\|g\|_{C^{3}(B'_{1/\delta})}\le \delta $ and put 
\[
\Omega := \{x_n>g(x')\} \cap  (B_{1/\delta} \setminus B_{\delta}).
\]
Let $w:\Omega\to \R$ be such that
\[
\left\{
\begin{array}{rcll}
\Delta w & = & 0 & \quad\text{in}\quad \Omega,\\
|\partial_\nu w| & \le & \delta&\quad\text{on}\quad \{x_n = g(x')\}\cap \Omega.
\end{array}
\right.
\]
Assume, moreover, that $w: \Omega \to \R$ satisfies 
\[
|w|\le |x|^\alpha + |x|^{1-n-\alpha}\qquad\text{in}\quad \Omega. 
\]

Then, there exist $b_*, c_*\in \R$ and $d_*'\in \R^{n-1}$   with $|b_*|+ |c_*| + |d'_*| \le 50$ such that 
\[
\Big|w - b_* - c_* |x|^{2-n} - (d_*'\cdot  x') |x|^{-n} \Big|\le \eps \quad \mbox{in } (B_4\setminus B_{1/4}) \cap \{x_n >g(x')\}\,.   
\]
\end{lemma}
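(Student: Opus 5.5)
The plan is to argue by compactness and contradiction, in the same way as Lemma~\ref{lem:compactness}. Suppose the statement fails for some $\eps_\circ>0$. Then there are sequences $\delta_k\to 0$, $g_k$ with $\|g_k\|_{C^3(B'_{1/\delta_k})}\le \delta_k$, domains $\Omega_k$, and harmonic $w_k$ satisfying the hypotheses with $\delta=\delta_k$. For each $k$, no triple $(b_*,c_*,d_*')$ with $|b_*|+|c_*|+|d_*'|\le 50$ approximates $w_k$ within $\eps_\circ$ on $(B_4\setminus B_{1/4})\cap\{x_n>g_k(x')\}$.

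The first step is to flatten the boundary. Set $\tilde w_k(y',y_n):=w_k(y',y_n+g_k(y'))$. This function solves a divergence-form equation $\mathrm{div}(A_k\nabla \tilde w_k)=0$ in $\{y_n>0\}$, with $\|A_k-\mathrm{Id}\|_{C^{2}}\le C\delta_k$. Its conormal derivative on $\{y_n=0\}$ is bounded by $C\delta_k$. The growth bound $|\tilde w_k|\le C(|y|^\alpha+|y|^{1-n-\alpha})$ gives local uniform $L^\infty$ bounds on compact subsets of $\overline{\{y_n>0\}}\setminus\{0\}$. Boundary regularity for the oblique/conormal problem (e.g.\ \cite{lieberman1983conormal}) then gives uniform $C^{1,\beta}$ bounds up to $\{y_n=0\}$ on such compacts. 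By Arzel\`a--Ascoli and a diagonal argument, $\tilde w_k\to w_\infty$ locally uniformly on $\overline{\{y_n>0\}}\setminus\{0\}$. The limit satisfies $\Delta w_\infty=0$ in $\{y_n>0\}$, $\partial_n w_\infty=0$ on $\{y_n=0\}\setminus\{0\}$, and $|w_\infty|\le |x|^\alpha+|x|^{1-n-\alpha}$. Since $g_k\to 0$ in $C^3$, convergence of $w_k$ to $w_\infty$ also holds uniformly on $(B_4\setminus B_{1/4})\cap\{x_n>g_k\}$, after extending $w_\infty$ evenly.

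The second step is a Liouville classification of the limit. Let $\bar w$ be the even reflection of $w_\infty$; it is harmonic in $\R^n\setminus\{0\}$. Expand $\bar w$ in spherical harmonics, $\bar w=\sum_\ell (a_\ell r^\ell+b_\ell r^{2-n-\ell})Y_\ell$. The growth $|\bar w|\le |x|^\alpha$ at infinity with $\alpha<1$ kills every $r^\ell$ term with $\ell\ge 1$. The bound $|x|^{1-n-\alpha}$ at the origin kills every $r^{2-n-\ell}$ term with $2-n-\ell<1-n-\alpha$, that is, with $\ell\ge 2$. Hence
\[
\bar w=b_*+c_*|x|^{2-n}+d_*\cdot x\,|x|^{-n}.
\]
Evenness in $x_n$ forces $d_{*,n}=0$. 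The coefficients are bounded via orthogonality: testing on the unit sphere and on spheres of radius $2$ with the bound $|\bar w|\le 2$ there gives $|b_*|+|c_*|+|d_*'|\le C(n)$, and the constant $50$ should follow from a crude explicit estimate. This contradicts the choice of $\eps_\circ$ for $k$ large.

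I expect the main obstacle to be the uniform boundary estimates near the curved boundary $\{x_n=g_k\}$, with only an approximate Neumann condition, on a domain whose interior and exterior scales degenerate. I would handle this by the flattening above combined with standard conormal $C^{1,\beta}$ estimates on fixed annular compacts. The constant bound $50$ may need a careful computation of the spherical-harmonic projections, or may only be needed up to a dimensional constant in the later application.
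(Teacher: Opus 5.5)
Your proof is correct and follows the same scheme as the paper: contradiction, compactness, even reflection across $\{x_n=0\}$, and classification of the limit in $\R^n\setminus\{0\}$. The difference is in the tools.

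\textbf{Classification step.} The paper argues with distributions.
\begin{itemize}
\item Since $\bar w_\infty\in L^1_{\rm loc}(\R^n)$, $\Delta\bar w_\infty$ is a distribution supported at the origin.
\item The growth at $0$ limits its order to at most $1$, so $\Delta\bar w_\infty=a\delta_0+\sum_i b_i\partial_i\delta_0$.
\item Subtracting multiples of the fundamental solution and its first derivatives leaves an entire harmonic function of sublinear growth, which is constant.
\end{itemize}
Your mode-by-mode spherical harmonic analysis replaces both the structure theorem for point-supported distributions and Liouville's theorem with an elementary ODE argument. It also makes transparent how $\alpha\in(0,1)$ is used at each end. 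The paper's version is shorter, given standard distribution theory.

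\textbf{The constant $50$.} This is immediate either way.
\begin{itemize}
\item Since $d_{*,n}=0$, we have $\sup_{\partial B_1}|b_*+c_*+d_*'\cdot x'|=|b_*+c_*|+|d_*'|\le 2$.
\item Comparing spherical means on $\partial B_1$ and $\partial B_2$ gives $|c_*|(1-2^{2-n})\le 2+2^\alpha+2^{1-n-\alpha}$.
\item Hence $|c_*|\le 8.5$ and $|b_*|\le 10.5$, so the total is at most $21$.
\end{itemize}

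\textbf{A claim in your compactness step needs correcting.} You only know $|\partial_\nu w_k|\le\delta_k$, with no regularity of the Neumann datum. Under that hypothesis, uniform $C^{1,\beta}$ bounds up to the boundary are not available. Already in the half-space, the tangential gradient of the Neumann solution is a Riesz-transform-type image of the datum and can be unbounded.

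What does hold uniformly is one of the following:
\begin{itemize}
\item $C^{0,\beta}$ bounds, from De Giorgi--Nash--Moser for conormal problems;
\item or, as in the paper, $W^{1,p}$ bounds for large $p$ \cite{geng2012w1} combined with Sobolev embedding.
\end{itemize}
Either is enough for Arzel\`a--Ascoli.

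The homogeneous Neumann condition for $w_\infty$ then cannot be read off pointwise from $C^1$ convergence. Instead, pass to the limit in the weak formulation $\int A_k\nabla\tilde w_k\cdot\nabla\varphi=\int_{\{y_n=0\}}\psi_k\varphi$. Here $\|\psi_k\|_{L^\infty}\le C\delta_k$, and $\nabla\tilde w_k\rightharpoonup\nabla w_\infty$ weakly in $L^2_{\rm loc}$ by Caccioppoli. With this adjustment your argument is complete.
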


\begin{proof}
    The proof follows by compactness. Assume by contradiction that the statement does not hold for some sequence $w_k$ and $\delta = 1/k$. 

    By standard $W^{1,p}$ regularity estimates for harmonic functions with $L^p$ Neumann boundary data in smooth domains (see, e.g., \cite{geng2012w1}), $w_k$ converges locally uniformly (by the Sobolev embedding theorem) in $\{x_n \ge 0\} \setminus \{0\}$ to some $w_\infty$ in $\{x_n\ge 0\}$. By weak $W^{1,2}$ convergence, $w_\infty$ is harmonic in $\{x_n > 0\}$ with zero Neumann condition on $\{x_n = 0\}\setminus\{0\}$. If we denote by $\bar w_\infty$ the even extension of $w_\infty$ about $\{x_n = 0\}$ (i.e., $\bar w_\infty(x', x_n) = w_\infty (x', |x_n|)$) we have 
    \[
    \left\{
    \begin{array}{rcll}
        \Delta \bar w_\infty & = & 0& \quad \text{in}\quad \R^n \setminus\{0\}\\
        |\bar w_\infty| & \le  & |x|^\alpha + |x|^{1-n-\alpha} & \quad\text{in}\quad \R^n,\\
        &&\hspace{-1.8cm} \text{$\bar w_\infty$ is even in $x_n$}.
    \end{array}
    \right.
    \]
    In particular, one can take $\bar w_\infty\in L^1_{\rm loc}(\R^n)$ and $\Delta \bar w_\infty$ is a distribution in $\R^n$ supported at $\{0\}$. From the growth condition at zero, it has at most order 1, so we must have 
    \[
    \Delta \bar w_\infty = a  \delta_0 + \sum_{i = 1}^n b_i \partial_i \delta_0
    \]
    in the distributional sense. Together with Liouville's theorem for harmonic functions with sublinear growth (see, e.g., \cite[Proposition 1.19]{FernandezRealRosOton2022}) we deduce 
    \[
    \bar w_\infty = \bar c + \bar a |x|^{2-n} + \sum_{i = 1}^n \bar b_i x_i |x|^{-n}.
    \]
    This is a contradiction for $k$ large enough. The bounds on $\bar a, \bar b$, and $\bar c$, follow from the growth assumption, and $\bar b_n = 0$ by evenness. 
\end{proof}

Before we can give the proof of Theorem \ref{thm:highordexp}, we need to introduce a  fine-grained version of $H(u,r)$ using approximating Ans\"atze as in \eqref{Ansatz1}-\eqref{Ansatz2} (with bounded coefficients).

\begin{definition}\label{defi:new_height} Let $n\ge 3$. Let  $u$ be a classical solution, \eqref{eq:classical_Bernoulli}, in  a domain $\R^n\setminus B_1$    satisfying \eqref{initial_aprox}. Let $r>4$. We define the \emph{fine annular height} at scale $r$  as
\[
     \hat H(u,r) := \min_{{\hat v}\in \cA(1/8)}  \|u_r - {\hat v}\|_{L^\infty( \{u_r>0\}\cap ( B_{2}\setminus \overline{B_{1/2}}))},
\]
where we recall that $\cA(\kappa)$ is given by Definition~\ref{defi:Ansatz}, and $u_r$ is given by \eqref{eq:rescalingb0}.
\end{definition}

\begin{proof}[Proof of Theorem~\ref{thm:highordexp}]
We take as starting point \eqref{initial_aprox} rescaled: for all $r\ge 4$
\begin{equation}\label{startingpt}
    \big|u_r-x_n\big|\le \frac{\overline C}{r^{n-2+\alpha}}
    \qquad\text{in }\{u_r>0\}\cap (B_2\setminus B_{1/2}).
\end{equation}

Set
\[
    \hat h_r:= \hat H(u,r),\qquad r\ge 4.
\]
By \eqref{startingpt} (recall that from Corollary~\ref{cor1} we may assume $|b_0|\le 1/16$), we have
\begin{equation}\label{eq:coarsebound-h}
      r \hat h_r\le \overline C\, r^{3-n-\alpha}\qquad\text{for all }r\ge 4.
\end{equation}

\medskip
We claim that there exists \(C_\star\ge 1\) (large) such that
\begin{equation}\label{eq:fineboundgoal}
   r\hat h_r\le C_\star r^{1-n-\alpha}\qquad\text{for all }r\ge 4.
\end{equation}
To prove this, define (for $C_\star$ to be fixed)
\[
    \eps_\star:=\inf\Bigl\{\eps>0:\  r\hat h_r\le C_\star r^{1-n-\alpha}+\eps
    \quad\text{for all }r\ge 4\Bigr\}.
\]
We argue by contradiction and assume that \(\eps_\star>0\). Then, there exists \(r_\star\ge 4\) such that
\begin{equation}\label{eq:badscale}
    r_\star \hat  h_{r_\star}\ge C_\star r_\star^{1-n-\alpha}+\frac{\eps_\star}{2}.
\end{equation}
Combining \eqref{eq:badscale} with \eqref{eq:coarsebound-h}, we get
$
    C_\star r_\star^{1-n-\alpha}\le \overline C\, r_\star^{3-n-\alpha}
$
and hence
\begin{equation}\label{eq:rstarlarge}
    r_\star\ge \Bigl(\frac{C_\star}{\overline C}\Bigr)^{1/2}.
\end{equation}
That is, by taking $C_\star$ large, we can make $r_\star$ large. 

Let $\delta > 0$ be a small constant to be fixed, that will depend only on $n$ and $\alpha$. By taking \(C_\star\) large enough (depending on $\delta>0$), we may assume
\[
    r_\star\ge 4/\delta.
\]

For any $r >4$, let us  choose coefficients
\[
    (\hat b_r,\hat c_r,\hat d_r')\in [-1/8, 1/8]\times [-1/8, 1/8]\times \overline{ B'_{1/8}}
\]
such that (recall Definition~\ref{defi:Ansatz})
\begin{equation}\label{eq:hr-def}
  \hat h_r= \big\|u_r - {\hat v}[\hat b_r, \hat c_r, \hat d_r']  \big\|_{L^\infty(\{u_r>0\}\cap ( B_{2}\setminus \overline{B_{1/2}}))
  }.
\end{equation}

We now divide the proof into multiple steps. 

\medskip
\noindent{\bf Step 1. 
Control of approximating Ansatz at scale $r_\star$.}
We claim that
\begin{equation}\label{eq:coeff-interior}
|\hat b_{r_\star}|+ |\hat c_{r_\star}|+|\hat d_{r_\star}'|
 \le  C (\overline C)\, r_\star^{-\beta} (\hat h_{r_\star})^{1/2},
 \qquad\text{where}\qquad
 \beta:=\tfrac{n-2-\alpha}{2} >0.
\end{equation}

Indeed, on the one hand by \eqref{eq:hr-def}, for any $r\ge 4$, 
\[
  \bigg|u_{r} - x_n -\hat b_{r} - \frac{\hat c_{r}}{|x|^{n-2}} -\frac{\hat d'_{r}\cdot x'}{|x|^{n}} \bigg|
  \le \hat h_{r}
  \qquad \mbox{in }\Omega_r,
\]
where
\[
\Omega_{r}:= \{u_{r} >0\}\cap (B_2\setminus B_{1/2})
\supset \{ x_n \ge \tfrac 1 8\}\cap (B_{2}\setminus B_{1/2}).
\]

Using that $u_{r/2}(x) =2u_r(x/2)$, and that
\[
\Omega_{r/2}\cap 2 \Omega_{r}\supset \{x_n\ge \tfrac 1 4\}\cap (B_{2}\setminus B_{1}) =: \widetilde\Omega,
\]
the triangle inequality gives 
\begin{equation}
\label{eq:finitelymany}
  \bigg| \hat b_{r/2}+  \frac{\hat c_{r/2}}{|x|^{n-2}} +  \frac{\hat d'_{r/2}\cdot x'}{|x|^{n}} -
  2\hat b_{r}- 2^{n-1}\frac{\hat c_{r}}{|x|^{n-2}} - 2^{n}\frac{\hat d'_{r}\cdot x'}{|x|^{n}}\bigg|
  \le \hat h_{r/2}+ 2\hat h_r
  \qquad \mbox{in }\widetilde\Omega.
\end{equation}

Recall that, by definition of $\eps_{\star}$ and \eqref{eq:coarsebound-h},
\begin{equation}
    \label{eq:geommean}
\hat h_{r}\le C_\star r^{-n-\alpha} + \eps_\star/r
\qquad\text{and}\qquad 
\hat h_{r}\le \overline C r^{2-n-\alpha}.
\end{equation}

In what follows we absorb (harmless) powers of $\overline C$ into $C(\overline C)$.
Setting
\begin{equation}
    \label{eq:hateq}
b_r := r\hat b_r,\qquad c_r := r^{n-1}\hat c_r,\qquad d'_r := r^{n}\hat d'_r,
\end{equation}
and using \eqref{eq:finitelymany} at finitely many points, and \eqref{eq:geommean}, we obtain
\[
\begin{split}
|c_{r/2} - c_r|
&\le C(\overline C)\, r^{n-1}
\Big((C_\star  r^{-n-\alpha})^{1/2}+ (\eps_\star /r)^{1/2}\Big) r^{(2-n-\alpha)/2} \\
&\le C(\overline C) \Big((C_\star)^{1/2} r^{-\alpha}
      + (\eps_\star /r)^{1/2} r^{(n-\alpha)/2}\Big),\\
|d'_{r/2} - d'_r| & 
\le C(\overline C) r^{n}\Big((C_\star  r^{-n-\alpha})+ (\eps_\star /r)\Big)\\
 & \le C(\overline C)\Big(C_\star r^{-\alpha} + (\eps_\star /r)\, r^{n}\Big).
\end{split}
\]

Let now $k_\star\in \mathbb N$ be such that $2^{-k_\star}r_\star \in (8,16)$ and put
\[
r_0 = 2^{-k_\star}r_\star.
\]
Summing the corresponding geometric series (using that $n\ge 3$) we have
\[
\sum_{j=0}^{k_\star-1}(2^{-j}r_\star)^{-\alpha}\le C,
\qquad
\sum_{j=0}^{k_\star-1}(2^{-j}r_\star)^{(n-1-\alpha)/2}\le C\, r_\star^{(n-1-\alpha)/2},\qquad 
\sum_{j=0}^{k_\star-1}(2^{-j}r_\star)^{n-1}\le C\, r_\star^{n-1},
\]
and we thus obtain
\[
\begin{split}
|c_{r_0} - c_{r_\star}| & 
\le C\overline C\Big( (C_\star)^{1/2} + (\eps_\star/r_\star)^{1/2} r_\star^{(n-\alpha)/2} \Big),\\
|d'_{r_0} - d'_{r_\star}| & 
\le C\Big( C_\star + (\eps_\star/r_\star)\, r_\star^{n}\Big).
\end{split}
\]

On the other hand, by \eqref{startingpt},
\[
|c_{r_0}|+ |d'_{r_0}|
\le C  \big(
|\hat c_{r_0}|+ |\hat d'_{r_0}|\big)
\le C (\overline C),
\]
because $r_0\in(8,16)$. Hence, by the triangle inequality (and since $C_\star\ge 1$),
\begin{align}
|c_{r_\star}| & \le  C (\overline C)\Big( (C_\star)^{1/2} + (\eps_\star/r_\star)^{1/2} r_\star^{(n-\alpha)/2}\Big), \notag\\
|d'_{r_\star}| & \le  C (\overline C)  \Big( C_\star + (\eps_\star/r_\star) r_\star^n\Big). \notag
\end{align}

Recalling now \eqref{eq:badscale},
\[
 C_\star\le\hat h_{r_\star} r_\star^{n+\alpha}
 \qquad \mbox{and}\qquad
 \eps_\star/r_\star \le 2\hat h_{r_\star},
\]
we infer (recall also \eqref{eq:hateq})
\[
\begin{split}
|\hat c_{r_\star}|
&= r_\star^{1-n}|c_{r_\star}| \\
&\le C (\overline C)\, r_\star^{1-n}
\Big((C_\star)^{1/2} + (\eps_\star/r_\star)^{1/2} r_\star^{(n-\alpha)/2}\Big) \\
&\le C (\overline C)\Big(
(\hat h_{r_\star})^{1/2} r_\star^{1-n+(n+\alpha)/2}
+ (\hat h_{r_\star})^{1/2} r_\star^{1-n+(n-\alpha)/2}
\Big).
\end{split}
\]
Since
\[
1-n+\frac{n+\alpha}{2} =  -\beta,
\qquad
1-n+\frac{n-\alpha}{2}=\frac{2-n-\alpha}{2}\le -\beta,
\]
it follows that
\begin{equation}
\label{eq:hatcbound}
|\hat c_{r_\star}| \le C (\overline C)\, (\hat h_{r_\star})^{1/2} r_\star^{-\beta}.
\end{equation}

Similarly,
\[
\begin{split}
|\hat d'_{r_\star}|
&= r_\star^{-n}|d'_{r_\star}|  \le C (\overline C)\Big(C_\star r_\star^{-n} + \eps_\star/r_\star\Big)  \le C (\overline C)\Big(\hat h_{r_\star} r_\star^\alpha + \hat h_{r_\star}\Big)
\le C (\overline C)\, \hat h_{r_\star} r_\star^\alpha,
\end{split}
\]
since $r_\star\ge 8$. Finally, using \eqref{eq:coarsebound-h} we obtain
\[
\hat h_{r_\star} r_\star^\alpha
= (\hat h_{r_\star})^{1/2}\Big((\hat h_{r_\star})^{1/2} r_\star^\alpha\Big)
\le C (\hat h_{r_\star})^{1/2} r_\star^{(2-n-\alpha)/2+\alpha}
= C (\hat h_{r_\star})^{1/2} r_\star^{-\beta}.
\]
Therefore,
\[
|\hat d'_{r_\star}| \le C (\overline C)\, (\hat h_{r_\star})^{1/2} r_\star^{-\beta}.
\]

Combining the last two bounds proves the bound for $|\hat c_{r_\star}|$ and $|\hat d'_{r_\star}|$ in \eqref{eq:coeff-interior}. It remains only to prove the bound for $|\hat b_{r_\star}|$. 

We proceed as follows: by \eqref{startingpt} we have, arguing as before,
\begin{equation}\label{decayb}
    \lim_{r\to \infty}   b_r  =0 \quad \mbox{and} \quad |b_r|\le C\overline C r^{3-n-\alpha}.
\end{equation}

On the other hand, \eqref{eq:geommean} and \eqref{eq:badscale} imply: 
\[
r\hat h_r \le 2r_\star \hat h_{r_\star} \quad \mbox{for all}\quad   r \ge r_\star
\]
But then, with a triangle inequality argument similar to that above
\[
|b_{2r}- b_r| \le Cr(\hat h_{2r} + \hat h_{r})\le C r_\star \hat h_{r_\star}  \quad \mbox{ for all $r\ge r_\star$}.
\]

Hence,
\[
|b_{r_\star} - b_R| \le C r_\star \hat h_{r_\star}  \log (R/r_\star).
\]

Now, let us take $R = r_\star^p$ with $p>1$ large. By \eqref{decayb} and triangle inequality we find: 
\[
|b_{r_\star} | \le  |b_{r_\star} - b_R|  + |b_R| \le Cr_\star\big( \hat h_{r_\star}  \log (r_\star) + r_\star^{(3-n-\alpha)p-1} \big)
\]

Let us fix  $p\ge \frac{n+\alpha+1}{n-3+\alpha}$ so that $r_\star^{(3-n-\alpha)p-1}\le C \hat h_{r_\star}$ (recall again \eqref{eq:badscale}). Then, we obtain 
\[
|b_{r_\star} | \le   Cr_\star\, \hat h_{r_\star}  \log (r_\star)  \quad \mbox{and thus}\quad |\hat b_{r_\star}| \le C \hat h_{r_\star}  \log (r_\star) .
\]

But by \eqref{eq:coarsebound-h}   we have $\hat h_{r_\star}\le \overline C r_\star^{2-n-\alpha}$ and thus  
\[
|\hat b_{r_\star} | \le  C\hat h_{r_\star}^{1/2}  r_\star^{-(n-2+\alpha)/2} \log (r_\star) \le C r_\star^{-\beta} \hat h_{r_\star}^{1/2}, 
\]
as we wanted.

\medskip
\noindent{\bf Step 2. Normalization at the bad scale.} 
Let us write
\begin{equation}
    \label{eq:vsupscript}
{\hat v}^r:= {\hat v}[\hat b_r, \hat c_r, \hat d_r']  \qquad \mbox{and}\qquad {\hat v}_\star:= {\hat v}^{r_\star}.
\end{equation}
Since \(r_\star\) is large and \eqref{initial_aprox} holds, the free boundary in the annulus
\[
    B_{r_\star/\delta}\setminus B_{\delta r_\star}
\]
is a \(C^3\) graph in the \(x_n\)-direction with arbitrarily small \(C^3\)-norm after scaling. More precisely, after rescaling by \(r_\star\), we may write
\[\begin{split}
    \Omega_\star
    & := 
    r_\star^{-1}\Bigl(\{u(\, \cdot\,  + b_0 e_n)>0\}\cap (B_{r_\star/\delta}\setminus B_{\delta r_\star})\Bigr)
    \\ & =
     \{u_{r_\star}> 0\}\cap (B_{1/\delta}\setminus B_\delta) \\ &  = \{x_n>g_\star(x')\}\cap (B_{1/\delta}\setminus B_\delta),
    \end{split}
\]
with
\begin{equation}\label{eq:gstar-small}
    \|g_\star\|_{C^3(B'_{1/\delta})}\le \delta,
\end{equation}
provided \(C_\star\) is chosen large enough (again, depending on $\delta$).

Define
\[
    w(x):=\frac{u_{r_\star}-{\hat v}_\star}{ \hat h_{r_\star}}
    \qquad\text{in}\quad \Omega_\star.
\]
Since both \(u_{r_\star}\) and \(\hat v_\star\) are harmonic in \(\{u_{r_\star}>0\}\setminus \{0\}\), we have
\[
    \Delta w=0\qquad\text{in}\quad \Omega_\star.
\]
Also, by \eqref{eq:hr-def},
\begin{equation}\label{eq:w-unit-annulus}
    |w|\le 1\qquad\text{in}\quad \Omega_\star\cap (B_2\setminus B_{1/2}).
\end{equation}

\medskip
\noindent{\bf Step 3. Small Neumann datum.} We apply the flux-control lemma, Lemma~\ref{fluxcontrol},  to \(u_{r_\star}\) and \(v_\star\), 
using \(\theta=\frac34\). After scaling, we obtain, denoting $\rho = r/r_\star$, and recalling \eqref{eq:coeff-interior},
\[
    |\partial_\nu w|
    \le
    \frac{C}{\hat h_{r_\star}}
    \Bigl[\bigl(\hat h_r\bigr)^{2\theta}+C (\overline C)\, r_\star^{-2\beta} \hat h_{r_\star}\Bigr]
     \qquad\text{on}\quad \partial\Omega_\star\cap (B_{2\rho}\setminus B_{\rho/2}).
\]
We remark that the constants above depend on $\delta$ as well. 

Since $\hat h_r \le C_\delta \hat h_{r_\star}$ for all $r\in (\delta r_\star, r_\star/\delta)$ (by definition of $r_\star$ and $\hat h_{r_\star}$; see \eqref{eq:ratio-est}  for a precise estimate in terms of $\rho$) we have, for some $C$ that might depend on $\delta$ (and $\overline C$),
\begin{equation}
\label{eq:firstsecondterm}
    |\partial_\nu w|
    \le
    C\bigl(\hat h_{r_\star}\bigr)^{2\theta-1}
    +Cr_\star^{-2\beta} \qquad\text{on}\quad \partial\Omega_\star\cap (B_{1/\delta}\setminus B_\delta).
\end{equation}
 Since \(\hat h_{r_\star}\le \overline C r_\star^{2-n-\alpha}\) and  we have (for $\theta = \tfrac34$)
\[
\hat h_{r_\star}^{2\theta-1}\to 0\qquad\text{as}\quad r_\star\to\infty,
\] by taking \(C_\star\) large enough we can ensure
\begin{equation}\label{eq:small-neumann}
    |\partial_\nu w|\le \delta
    \qquad\text{on}\quad\partial\Omega_\star\cap (B_{1/\delta}\setminus B_\delta).
\end{equation}

\medskip
\noindent{\bf Step 4. Growth control of \(w\) on all intermediate scales.} We claim that
\begin{equation}\label{eq:w-growth}
    |w(x)|\le C\Bigl(|x|^\alpha+|x|^{1-n-\alpha}\Bigr)
    \qquad\text{in}\quad \Omega_\star,
\end{equation}
for some \(C=C(n,\alpha)\).  Indeed, by the definition of \(\eps_\star\),
\begin{equation}\label{eq:hr-upper}
    r\hat h_r\le C_\star r^{1-n-\alpha}+\eps_\star
    \qquad\text{for all }r\ge 4,
\end{equation}
while \eqref{eq:badscale} also holds. Thus, for \(r=t r_\star\),
\begin{equation}\label{eq:ratio-est}
    \frac{r\hat h_r}{r_\star \hat h_{r_\star}}
    \le
    2\Bigl(t^{1-n-\alpha}+1\Bigr)
    \qquad\text{for}\quad t\in (\delta,1/\delta).
\end{equation}

Let now $s\in (\delta, 1/\delta)$ be fixed, and denote $\rho := s r_\star$. By triangle inequality, we have (recall the notation \eqref{eq:vsupscript})
\begin{equation}\label{eq:alltogether}
\|u_{r_\star} - {\hat v}_\star \|_{L^\infty(\partial B_s \cap \{u_{r_\star} > 0\})} \le s \|u_\rho - {\hat v}^\rho\|_{L^\infty(\partial B_1\cap \{u_\rho > 0\})} + s \|{\hat v}^\rho - {\hat v}_\star(s\,\cdot\,) / s\|_{L^\infty(\partial B_1\cap \{u_\rho > 0\})}.
\end{equation}
On the one hand, we have 
\[
s \|u_\rho - {\hat v}^\rho\|_{L^\infty(\partial B_1\cap \{u_\rho > 0\})}\le s \hat h_\rho \le  2\hat h_{r_\star} (s^{1-n-\alpha}+1),
\]
thanks to \eqref{eq:ratio-est}. On the other hand, recalling \eqref{eq:hateq},
\begin{equation}
    \label{eq:ontheotherhand}
s \|{\hat v}^\rho - {\hat v}_\star(s\,\cdot\,) / s\|_{L^\infty(\partial B_1\cap \{u_\rho > 0\})} \le C_n\left( r_\star^{-1}|b_\rho - b_{r_\star}| + r_\star^{-1}\rho^{2-n}|c_\rho - c_{r_\star}| + r_\star^{-1}\rho^{1-n}|d'_\rho-d'_{r_\star}|\right).
\end{equation}
As in Step 1, whenever \(r,2r\in [\delta r_\star,\delta^{-1}r_\star]\), we have
\begin{equation}\label{eq:coeffcompare}
    |b_{2r}-b_r|+r^{2-n}|c_{2r}-c_r|+r^{1-n}|d_{2r}'-d_r'|
    \le C_{n,\alpha}r(\hat h_{2r}+\hat h_r).
\end{equation}
Summing dyadically, and using \eqref{eq:ratio-est}, we deduce
\[
\begin{split}
|b_\rho - b_{r_\star}|& \le C_{n,\alpha} r_\star \hat h_{r_\star} (s^{1-n-\alpha}+\log s),\\
|c_\rho - c_{r_\star}|& \le C_{n,\alpha} r_\star^{n-1} \hat h_{r_\star} (s^{-1-\alpha}+s^{n-2}),\\
|d'_\rho - d'_{r_\star}|& \le C_{n,\alpha} r_\star^{n} \hat h_{r_\star} (s^{-\alpha}+s^{n-1}).
\end{split}
\]
In \eqref{eq:ontheotherhand} we thus get
\[
s \|{\hat v}^\rho - {\hat v}_\star(s\,\cdot\,) / s\|_{L^\infty(\partial B_1\cap \{u_\rho > 0\})} \le C_{n,\alpha}\hat h_{r_\star} (s^{1-n-\alpha} + |\log s|). 
\]

Since $|\log s|\le C (s^{1-n-\alpha} + s^\alpha)$, putting it all together in \eqref{eq:alltogether} we get \eqref{eq:w-growth}. 

\medskip
\noindent{\bf Step 5. Compactness improvement.} By \eqref{eq:gstar-small}, \eqref{eq:small-neumann}, and \eqref{eq:w-growth}, we may apply the perturbative compactness lemma, Lemma~\ref{lem:perturbativecompactness}. Up to dividing by a constant depending only on $n$ and $\alpha$ for the application of Lemma~\ref{lem:perturbativecompactness}, we may assume that $C$ in \eqref{eq:w-growth} is 1. We obtain coefficients
\[
    b^\sharp,c^\sharp\in\R,
    \qquad
    d^{\sharp\prime}\in \R^{n-1},
\]
bounded by a universal constant (depending on $n$ and $\alpha$, due to the constant in \eqref{eq:w-growth}), such that
\begin{equation}\label{eq:w-compact}
    \Bigl|w(x)-b^\sharp-c^\sharp |x|^{2-n}
    -(d^{\sharp\prime}\!\cdot x')|x|^{-n}\Bigr|
    \le \varepsilon
    \qquad\text{in}\quad\Omega_\star\cap (B_2\setminus B_{1/2}),
\end{equation}
where \(\varepsilon>0\) will be fixed small.

We thus obtain a new Ansatz at scale $r_\star$ given by:
\[
    \widetilde v(x)
    =
    {\hat v}_\star(x)
    +
    \hat h_{r_\star} b^\sharp
    +
    \hat h_{r_\star} c^\sharp |x|^{2-n}
    +
    \hat h_{r_\star} (d^{\sharp\prime}\!\cdot x')|x|^{-n}.
\]
From \eqref{eq:w-compact}, we get 
\[
    |u_{r_\star}-\widetilde v|\le \hat h_{r_\star} \varepsilon  \qquad\text{in }\{u_{r_\star} > 0\}\cap (B_{2}\setminus B_{1/2}).
\]
We can now fix $\eps = \tfrac12$ (which fixes $\delta$ from Lemma~\ref{lem:perturbativecompactness}, depending only on $n$ and $\alpha$) and $r_\star$ large enough so that (also using that $\widetilde v\in \cA(1/8)$ because the coefficients are \emph{small})
\[
    \hat h_{r_\star} = \hat H(u,r_\star)\le \|u_{r_\star}-\widetilde v\|_{L^\infty(\{u_{r_\star} > 0\}\cap (B_2\setminus B_{1/2}))} \le \tfrac12 \hat h_{r_\star};
\]
a contradiction. This proves \(\eps_\star=0\), and therefore
\begin{equation}
    \label{eq:boundhr_fin}
    \hat h_r\le C_\star r^{-n-\alpha}
    \qquad\text{for all }r\ge 4.
\end{equation}

\medskip
\noindent{\bf Step 6. Passage to the asymptotic coefficients.} From \eqref{eq:boundhr_fin} and using \eqref{eq:hr-def}-\eqref{eq:hateq} we know, for all $r\ge 4$ (up to a constant)
\[
\left|u(x+b_0 e_n) - w^r(x)\right| \le r\hat h_r \le C_\star r^{1-n-\alpha} \quad\text{in}\quad \{u(\, \cdot \, + b_0 e_n) > 0\}\cap B_{2r}\setminus B_{r/2}
\]
where 
\[
w^r(x) :=  x_n + b_r +\frac{c_r}{|x|^{n-2}} + \frac{d_r'\cdot x'}{|x|^n}. 
\]
Notice, also, that thanks to Lemma~\ref{lem:theta}, we  know 
\begin{equation}
\label{eq:nabla_control}
|\nabla u(x+b_0 e_n)  - \nabla w^r(x)|\le \hat h_r^\theta\le C r^{\theta(1-n-\alpha)} \quad\text{in}\quad \{u(\,\cdot\,  + b_0 e_n) > 0\}\cap B_{2r}\setminus B_{r/2}.
\end{equation}
Denoting $r_k$ the dyadic scale $r_k := 2^k$, and 
\[
(b_k, c_k, d_k') := (b_{r_k}, c_{r_k}, d'_{r_k}),
\]
by \eqref{decayb}-\eqref{eq:coeffcompare}-\eqref{eq:boundhr_fin} and summing a geometric series, we know $(b_k, c_k, d'_k)\to (0, c_\infty, d'_\infty)$ with 
\[
|b_k|\le Cr_k^{3-n-\alpha},\qquad  |c_k - c_\infty|\le Cr_k^{-1-\alpha},\qquad |d'_k - d'_\infty|\le C r_k^{-\alpha}.
\]
Thus, denoting $w^{r_\infty}$ the corresponding function with coefficients $(0, c_\infty, d'_\infty)$, we have 
\[
\begin{split}
|u(x+b_0 e_n) - w^{r_\infty}(x) |& \le |u(x+b_0 e_n) - w^{r_k}(x) |+|w^{r_k}(x) - w^{r_\infty}(x) |\\ & \le Cr_k^{1-n-\alpha}. 
\end{split}
\]
in $\{u(\,\cdot\, + b_0e_n) > 0\}\setminus B_{r_k/2}$. This yields the following bound, using that $|b_0|$ is bounded,
\begin{equation}
\label{eq:laststepproof}
\left|u(x) - x_n + b_0   - \frac{c_\infty}{|x-b_0e_n|^{n-2}} - \frac{d'_\infty\cdot x'}{|x-b_0e_n|^n}\right|\le \frac{C}{|x|^{n-1+\alpha}}\quad\text{in}\quad \{u > 0\}\setminus B_1. 
\end{equation}
Taking a Taylor expansion we obtain \eqref{eq:voftheform}. Notice, in particular, the appearance of a new term $d_nx_n|x|^{-n}$, with $d_n=(n-2) b_0 c_\infty$. The bound \eqref{expansion222_grad} for the gradients follows in the same way, using \eqref{eq:nabla_control}, up to choosing a larger $\alpha\in (0, 1)$ and $\theta$ very close to 1. 
\end{proof}
In particular, we obtain the: 
\begin{proof}[Proof of Theorem \ref{thm:highordexp_intro}]
It follows from Theorem \ref{thm:highordexp}.
\end{proof}



\section{Asymptotic structure of entire solutions with finite index}\label{sec:asymptind}
We focus now on entire solutions $u:\R^n\to[0,\infty)$ with finite Morse index.
\subsection{Preliminaries}
Our goal is to show:
\begin{proposition}\label{prop:FBdichotomy}
    Let $3\leq n < n_*$, and let $u:\R^n\to[0,\infty)$ be a classical solution to the Bernoulli problem with finite Morse index and which is not one-dimensional.
    
    Then, either $\FB(u)$ is bounded, or there exists some $R_0=R_0(u)>0$ such that we can decompose $u=u_+ +u_-$ in $D:=\R^n\setminus B_{R_0}$, where ${\rm supp}\, u_+\cap {\rm supp}\, u_-=\emptyset$, and each of the $u_\pm$ is a classical solution to the Bernoulli problem in $D$ with
\begin{equation}\label{flatnesshypothesisthm2}
    H_0(u_\pm,r) \le o(r)\quad\mbox{as}\quad r\to\infty.
\end{equation}
\end{proposition}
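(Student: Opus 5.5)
We argue by a blow-down analysis based on stability outside a compact set. By \cite[Proposition 3.4]{BK24}, finite index gives some $R_1>0$ such that $u$ is stable in $\R^n\setminus \overline{B_{R_1}}$. Consider the blow-downs $u_R(x):=u(Rx)/R$ for $R\to\infty$. They are uniformly Lipschitz and nondegenerate: gradient bounds and nondegeneracy hold for stable solutions away from $B_{R_1/R}$, and the Lipschitz bound at scale $R$ is inherited from the local regularity theory for stable classical solutions in dimensions $n<n_*$. Hence, up to subsequences, $u_R\to u_\infty$ locally uniformly in $\R^n\setminus\{0\}$, with Hausdorff convergence of free boundaries. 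The limit $u_\infty$ is a stable solution in $\R^n\setminus\{0\}$. Since $n\ge3$, a point has zero capacity, so stability and the equation extend across the origin, and $u_\infty$ is a stable entire solution in the viscosity/variational sense.

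The main step is classifying $u_\infty$. We first show that blow-downs are homogeneous. Try a Weiss-type monotonicity formula for $u$ centered at $0$. For a general critical point it is not monotone, but in the stable regime one can instead use the dimension restriction. Since $n<n_*^1$, every blow-down is a stable entire solution, hence one-dimensional, provided it is classical. Classicality follows from the $\eps$-regularity theory (improvement of flatness \cite{desilva2011rhs}) combined with the classification of stable cones for $n<n_*^2$ applied to tangent cones of $u_\infty$. So $u_\infty$ is a stable entire solution whose possible singular tangent cones are stable one-homogeneous solutions, hence flat for $n<n_*^2$; thus $u_\infty$ is classical. Since $n<n_*^1$, it is one-dimensional. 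A one-dimensional nonnegative solution is either a half-plane solution $(e\cdot x-b)_+$ or a two-plane solution $(e\cdot x-b_1)_+ + (-e\cdot x-b_2)_+$ with $b_1+b_2\ge 0$ (by scaling, $b$ might be nonzero only at finite scale). Passing through the origin-scaling, the limits are $(e\cdot x)_+$ or $|e\cdot x|$, or $u_\infty\equiv0$ near infinity. The case $u_\infty\equiv 0$ on all subsequences will be the bounded free boundary case. Indeed, if the free boundary reaches arbitrarily large radii, nondegeneracy forces a nonzero blow-down, so $\FB(u)$ bounded is equivalent to $u_\infty$ being a multiple of $|x|^{2-n}$-type decaying data, i.e.\ the first alternative.

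We then upgrade subsequential to full convergence of flatness. For every large $R$, $u_R$ is $o(1)$-close in $B_2\setminus B_{1/2}$ to either $(e_R\cdot x)_+$ or $|e_R\cdot x|$. By connectedness in $R$ (continuity of $u_R$ in $R$, and the two models being at positive distance), the type is eventually constant. In the one-plane case the conclusion is $H_0(u,r)=o(r)$ for a single function. This contradicts the non-one-dimensionality only after Section~\ref{sec:improvflat}, and otherwise we keep $u_-=0$, or rather we show it is excluded later. In the two-plane case, the positivity set in the annuli $B_{2R}\setminus B_{R/2}$ splits into two components near $\{\pm e_R\cdot x>0\}$. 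Since consecutive dyadic annuli overlap and $e_R$ varies slowly, these components glue into two disjoint open sets $U_\pm$ of $\{u>0\}\setminus B_{R_0}$. Set $u_\pm:=u\mathbf 1_{U_\pm}$. Each is a classical solution in $\R^n\setminus B_{R_0}$, and $H_0(u_\pm,r)=o(r)$ with directions $\pm e_r$.

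The expected obstacle is the compactness and classification of blow-downs without minimality: securing the uniform Lipschitz bound and nondegeneracy for stable (not minimizing) solutions, and ruling out multiplicity or collapsing of several sheets in the limit. Multiplicity is ruled out because two positivity components cannot collapse onto the same half-space while keeping $|\nabla u|=1$. I would first try to get these from the curvature estimates for stable solutions in low dimensions (as in \cite{CFFS25}), which give interior $C^{1,1}$ control of the free boundary at unit scale after rescaling.
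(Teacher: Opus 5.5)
There is a genuine gap, and it sits where you chose to avoid Weiss's formula. Your claim that Weiss monotonicity fails for general critical points is incorrect. Lemma~\ref{lem:weissformula} holds for every inner stationary solution, in particular for every classical solution, and the paper's proof rests on it. Since $|\nabla u|\le 1$ gives $W(u,R)\le |B_1|$, monotonicity yields $W(u,2r)-W(u,r)\le\delta$ for \emph{all} $r\ge R_0$. Hence the compactness Lemma~\ref{lem:blowdown} applies at every large scale, not just along subsequences, and every limit is one-homogeneous. Homogeneity is what allows one to invoke $n<n_*^2$ (stable classical cones in $\R^n\setminus\{0\}$). It also handles the case where two free-boundary sheets collapse tangentially at some $y\neq 0$: a second application of Weiss at $y$ identifies the limit as $|e\cdot x|$.

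Your substitute does not close these gaps, for four reasons.
\begin{enumerate}
\item The tangent-cone argument at the origin again needs Weiss monotonicity for $u_\infty$, so it is not independent of the tool you discarded.
\item A blow-down may contain collapsed sheets, and then it is not classical; the model $|e\cdot x|$ itself is not a classical solution. So $n<n_*^1$ cannot be applied to it. Your remark that multiplicity is excluded by $|\nabla u|=1$ misreads the situation: the collapse of the two sheets bounding a thin zero phase is exactly the configuration that produces $|e\cdot x|$. It cannot be excluded; it must be identified.
\item Without homogeneity, nothing rules out subsequential limits such as $(e\cdot x-b)_+$, or two-plane solutions with nonzero shifts. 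Such limits can vanish on $B_2\setminus B_{1/2}$ at one scale but not at the next. Your ``connectedness in $R$'' argument needs the possible limits on the annulus to form a discrete set of well-separated model types, so it breaks down. For the same reason you do not obtain that $\FB(u)$ meets every large sphere, which the paper gets by iterating the last conclusion of Lemma~\ref{lem:blowdown}.
\item The one-plane alternative is left unresolved. Taking $u_-\equiv 0$ does not satisfy \eqref{flatnesshypothesisthm2}, so this case must genuinely be excluded. The paper does this directly: closeness to $(e_r\cdot x)_+$ in the full ball $B_{16r}$ at arbitrarily large scales, combined with De Silva's improvement of flatness, forces $u$ to be one-dimensional. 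Your annulus-only closeness would not suffice for that argument.
\end{enumerate}
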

We first collect some important properties for local and global solutions.

The following gradient bound is well-known (see, e.g., \cite[Proposition A.5]{kamburov2022nondegeneracy}):
\begin{lemma}\label{lem:grad1bound}
    Let $u:\R^n\to[0,\infty)$ be an entire solution to the Bernoulli problem. Then, $|\nabla u|\leq 1$.
\end{lemma}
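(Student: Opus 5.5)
The statement to prove is Lemma~\ref{lem:grad1bound}: an entire classical solution satisfies $|\nabla u|\le 1$. I will use the Bochner-type subharmonicity of $|\nabla u|^2$ combined with a maximum principle argument. The boundary condition $|\nabla u|=1$ on $\FB(u)$ already gives the bound on the free boundary. The difficulty is that $\{u>0\}$ is unbounded, so the maximum principle cannot be applied directly.

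\textbf{Step 1: subharmonicity.} Since $u$ is harmonic in $\{u>0\}$, each $\partial_i u$ is harmonic there. Hence
\[
\Delta |\nabla u|^2 = 2|D^2u|^2\ge 0 \quad \text{in } \{u>0\},
\]
so $|\nabla u|^2$ is subharmonic, and it equals $1$ on $\FB(u)$.

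\textbf{Step 2: a priori gradient bound.} Set $M:=\sup_{\{u>0\}}|\nabla u|$. I first show $M<\infty$. Fix $x_0\in\{u>0\}$ and let $d=\mathrm{dist}(x_0,\FB(u))$.
\begin{itemize}
\item If $\FB(u)=\emptyset$, then $u$ is a nonnegative entire harmonic function, hence constant by Liouville, and $\nabla u\equiv 0$.
\item Otherwise, the nondegeneracy/Lipschitz theory for classical solutions bounds $u$ near the free boundary. I would instead use a compactness argument: if $|\nabla u(x_k)|\to\infty$, rescale around $x_k$ by the natural scale and pass to a limit.
\end{itemize}
A cleaner route is to avoid needing finiteness first: run the argument of Step 3 with $M$ possibly infinite, working with $\min(|\nabla u|,K)$ and local arguments. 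This is the delicate step.

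\textbf{Step 3: the sup is attained on the boundary.} Suppose $M>1$ and take a maximizing sequence $x_k$ with $|\nabla u(x_k)|\to M$. The strong maximum principle forbids an interior maximum unless $\nabla u$ is constant on a component of $\{u>0\}$. In that case $u$ is affine and the component is a half-space; the free boundary condition then forces $|\nabla u|=1$, a contradiction.

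So consider the translates $u_k:=u(\cdot+x_k)$ and compare with $d_k=\mathrm{dist}(x_k,\FB(u))$.
\begin{itemize}
\item \textbf{$d_k$ bounded along a subsequence.} By uniform $C^{1,\beta}$ estimates for classical solutions with bounded gradient (which hold up to the free boundary), $u_k$ converges to a classical solution $u_\infty$. Its gradient modulus attains $M$ either at an interior point or at a boundary point. An interior point contradicts the strong maximum principle as above. A boundary point is impossible, since the gradient modulus equals $1<M$ there.
\item \textbf{$d_k\to\infty$.} The limit $u_\infty$ is a positive entire harmonic function with bounded gradient. Then $u_\infty$ is affine. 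Going back to $u$, in large balls $B_{d_k}(x_k)$ the function $u$ is close in $C^1$ to an affine function with slope $\approx M$. The gradient estimate for harmonic functions, $|\nabla u(x_k)|\le C u(x_k)/d_k$, together with Harnack then gives $u(x_k)\gtrsim M d_k$.
\end{itemize}

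\textbf{Expected obstacle.} The second case is the main obstacle. The blow-down along $x_k$ should produce a global profile $\lim u(x_k+d_k x)/d_k$. This profile solves the Bernoulli problem with gradient $\le M$, and its gradient modulus equals $M$ at an interior point at distance $1$ from its free boundary. By the strong maximum principle it would be affine, which is incompatible with $|\nabla|=1$ on its free boundary.

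The point is that the free boundary condition passes to this limit in the viscosity sense. This follows from the uniform Lipschitz bound and flatness/regularity theory; the inequality $|\nabla u|\le 1$ at regular points of the limit should persist. Combining both cases gives $M\le 1$.
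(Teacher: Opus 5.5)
The paper does not prove this lemma; it quotes it as a known fact (Kamburov--Wang, Proposition~A.5). The mechanism in your last paragraph is the right one: a maximizing sequence rescaled by its distance to the free boundary, the strong maximum principle for $|\nabla u|^2$ in the limit, and the free boundary condition surviving the limit. As written, however, the proposal has two genuine gaps.

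\textbf{First gap: the Lipschitz bound.} Finiteness of $M=\sup_{\{u>0\}}|\nabla u|$ is never established, and every compactness step in Step~3 depends on it. Your proposed way around it fails. The function $\min(|\nabla u|,K)$ is not subharmonic (maxima of subharmonic functions are, minima are not), and ``rescale by the natural scale'' is not an argument. The missing ingredient is the standard Hopf barrier estimate. For $x_0\in\{u>0\}$, set $d=\mathrm{dist}(x_0,\FB(u))$ and pick $y\in\FB(u)\cap\partial B_d(x_0)$. Harnack gives $u\ge c_n u(x_0)$ on $B_{d/2}(x_0)$. Compare $u$ in $B_d(x_0)\setminus\overline{B_{d/2}(x_0)}$ with the radial harmonic function equal to $c_nu(x_0)$ on the inner sphere and $0$ on the outer one; this gives $1=|\nabla u(y)|\ge c_n'u(x_0)/d$. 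Thus $u(x_0)\le C_nd$. The gradient estimate $|\nabla u(x_0)|\le n\,u(x_0)/d$ for positive harmonic functions in $B_d(x_0)$ then yields $M\le C(n)$.

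\textbf{Second gap: the case analysis in Step~3.} The case ``$d_k$ bounded'' invokes uniform $C^{1,\beta}$ estimates up to the free boundary for classical solutions with bounded gradient. No such estimates exist: nothing controls the curvature of $\FB(u)$ along $x_k$, and limits can have collapsing sheets or singular points. So the subcase $d_k\to0$ is unjustified. Your first treatment of $d_k\to\infty$ produces no contradiction at all, since $u(x_k)\ge cMd_k$ is perfectly consistent.

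The fix is to use the rescaling $\tilde u_k(x):=u(x_k+d_kx)/d_k$ in \emph{every} case. The $\tilde u_k$ are $M$-Lipschitz classical solutions, positive and harmonic in $B_1$, vanishing somewhere on $\partial B_1$, with $|\nabla\tilde u_k(0)|\to M$. A limit $\tilde u_\infty$ is harmonic on $\{\tilde u_\infty>0\}\supset B_1$, with $|\nabla\tilde u_\infty|\le M$ and equality at $0$. So on the component $U\ni0$ it equals an affine $\ell$ with $|\nabla\ell|=M$, and $U=\{\ell>0\}$ is a half-space.

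The contradiction at $\partial U$ does not come from ``flatness/regularity theory'': $\tilde u_k$ is close to a slope-$M$ half-plane on one side only, and nothing is known on the other side. It is a direct sliding argument.
\begin{itemize}
\item Put $z\in\partial U$ at $0$ with $\nabla\ell=Me_n$, fix $1<M'<M$ and $A>0$, and let $\varphi=M'x_n-A|x'|^2+nAx_n^2$, so $\Delta\varphi=2A>0$.
\item For $r$ small, $|\nabla\varphi|>1$ in $B_r$ and $\varphi\le\tilde u_\infty-c$ on $\partial B_r$ for some $c>0$.
\item Raise $\varphi+t$ until it first touches $\tilde u_k$ from below in $\overline{B_r}$. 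The contact level satisfies $t\le\tilde u_k(0)\to0$.
\item For large $k$, the contact is therefore not on $\partial B_r$. It is also not where $\tilde u_k>0$, by strict subharmonicity against a harmonic function.
\item So it occurs at some $p\in\FB(\tilde u_k)$ with $\{\varphi+t>0\}\subset\{\tilde u_k>0\}$ near $p$. Hence $1=|\nabla\tilde u_k(p)|\ge|\nabla\varphi(p)|>1$, a contradiction.
\end{itemize}
With these two additions your argument becomes a complete proof.
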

Let us recall Weiss's monotonicity formula.
\begin{lemma}[{\cite[Theorem 3.1]{Weiss1998}}]\label{lem:weissformula}
    Let $u:B_1\to[0,\infty)$ be an inner stationary solution to the Bernoulli problem, and let 
    \begin{equation}\label{eq:weissdef}
        W( r):=\frac{1}{r^n}\int_{B_r}(|\nabla u|^2+\, \mathbf{1}_{\{u>0\}})-\frac{1}{r^{n+1}}\int_{\partial B_r}u^2\,d\mathcal H^{n-1}
    \end{equation}
    for $r\in(0,1)$. Then $\frac{d}{dr} W(r)\geq 0$; moreover, $\frac{d}{dr} W(r)=0$ precisely if $u$ is one-homogeneous in $B_r$. 
\end{lemma}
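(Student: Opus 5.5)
The last statement is Weiss's monotonicity formula (Lemma~\ref{lem:weissformula}), which we would prove by a scaling and domain-variation computation. Set $u_r(x):=u(rx)/r$. Then
\[
W(r)=\int_{B_1}\bigl(|\nabla u_r|^2+\mathbf 1_{\{u_r>0\}}\bigr)\,dx-\int_{\partial B_1}u_r^2\,d\mathcal H^{n-1}.
\]
We differentiate in $r$, using
\[
\partial_r u_r=\tfrac1r\bigl(x\cdot\nabla u_r-u_r\bigr)=:\tfrac1r Z_r .
\]
A direct differentiation is not legitimate, because the characteristic function term moves with the free boundary. So we first compute $\frac{d}{dr}$ of the unscaled energy $r^{-n}\int_{B_r}(\cdots)$ via the coarea formula, which gives boundary terms on $\partial B_r$.

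Next we use inner stationarity. Test the domain variation with $\xi(x)=x\,\phi_\epsilon(|x|)$, where $\phi_\epsilon$ approximates $\mathbf 1_{[0,r]}$, and let $\epsilon\to0$. This yields the Pohozaev-type identity
\[
(2-n)\int_{B_r}|\nabla u|^2-n\int_{B_r}\mathbf 1_{\{u>0\}}+r\int_{\partial B_r}\bigl(|\nabla u|^2+\mathbf 1_{\{u>0\}}\bigr)-2r\int_{\partial B_r}(\partial_\nu u)^2=0 .
\]
This identity eliminates the derivative of the volume-plus-Dirichlet part. Separately, since $u$ is harmonic in $\{u>0\}$ and $u\,|\nabla u|=0$ on the free boundary, we have
\[
\int_{B_r}|\nabla u|^2=\int_{\partial B_r}u\,\partial_\nu u ,
\]
which trades the bulk Dirichlet term for boundary terms.

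Substituting both identities and completing the square, we expect
\[
W'(r)=\frac{2}{r^{n+2}}\int_{\partial B_r}\bigl(x\cdot\nabla u-u\bigr)^2\,d\mathcal H^{n-1}\ \ge 0 .
\]

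The rigidity statement then follows from this formula. If $W'\equiv0$ on an interval, then $x\cdot\nabla u=u$ on spheres of a full-measure set of radii. By continuity this holds everywhere, so $u$ is one-homogeneous. Conversely, one-homogeneity makes the integrand vanish.

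The main obstacle is the justification of the domain-variation identity with the nonsmooth cutoff and the free-boundary term. For classical solutions, as used in this paper, one can instead integrate by parts directly on the smooth set $\{u>0\}\cap B_r$. The free-boundary contributions then combine as $|\nabla u|^2-1=0$ times $x\cdot\nu$, which vanish, and the same formula follows.
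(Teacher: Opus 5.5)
Your proposal is correct and is essentially Weiss's original proof, which the paper cites without reproducing. Like Weiss, you use the domain variation with $\xi=x\,\phi_\epsilon(|x|)$ to get the Pohozaev identity, the identity $\int_{B_r}|\nabla u|^2=\int_{\partial B_r}u\,\partial_\nu u$ coming from harmonicity in $\{u>0\}$, and completion of the square to reach $W'(r)=2r^{-n-2}\int_{\partial B_r}(x\cdot\nabla u-u)^2\,d\mathcal H^{n-1}$. The rigidity part should be read as $W'\equiv0$ on $(0,r)$, with homogeneity obtained by integrating $\frac{d}{dt}\bigl(u(t\theta)/t\bigr)=0$ along rays.
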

We have outer stability for solutions with finite index.
\begin{lemma}[{\cite[Proposition 3.4]{BK24}}]\label{basultokamburov}
    Let $u$ be a classical solution to the Bernoulli problem in $\R^n$ with finite Morse index. Then, there is some $R_0=R_0(u)>0$ such that $u$ is stable in $\R^n\setminus \overline{B_{R_0}}$.
\end{lemma}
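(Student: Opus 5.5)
The plan is the standard argument by contradiction: if $u$ failed to be stable outside every ball, we would build infinitely many negative directions of $\mathcal Q$ with pairwise disjoint supports. Let $m<\infty$ be the Morse index of $u$ in $\R^n$, and suppose that for every $R>0$ the solution $u$ is not stable in $\R^n\setminus\overline{B_R}$.

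\textbf{Construction.} By definition of instability in $\R^n\setminus\overline{B_R}$, for every $R$ there is $\varphi\in C_c^\infty\bigl((\R^n\setminus\overline{B_R})\cap\overline{\{u>0\}}\bigr)$ with $\mathcal Q(\varphi,\varphi;\R^n)<0$; in particular $\varphi\not\equiv 0$. We build $m+1$ such functions inductively.
\begin{itemize}
\item Start with $R_0=0$ (or any radius) and pick $\varphi_1$ with negative energy. Its support is compact, so it lies in $B_{R_1}$ for some $R_1>R_0$.
\item Given $\varphi_1,\dots,\varphi_k$ supported in $B_{R_k}$, use instability in $\R^n\setminus\overline{B_{R_k}}$ to pick $\varphi_{k+1}$ supported in $\R^n\setminus\overline{B_{R_k}}$ with $\mathcal Q(\varphi_{k+1},\varphi_{k+1})<0$.
\item Choose $R_{k+1}>R_k$ with $\operatorname{supp}\varphi_{k+1}\subset B_{R_{k+1}}$.
\end{itemize}
After $m+1$ steps the functions $\varphi_1,\dots,\varphi_{m+1}$ have pairwise disjoint supports, contained in the successive shells $B_{R_k}\setminus\overline{B_{R_{k-1}}}$. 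Since they are nonzero with disjoint supports, they are linearly independent.

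\textbf{Locality of $\mathcal Q$.} The bilinear form associated with \eqref{eq:Q_nonnegative} is
\[
\mathcal Q(\varphi,\psi)=\int_{\{u>0\}}\nabla\varphi\cdot\nabla\psi\,dx-\int_{\FB(u)}H\varphi\psi\,d\mathcal H^{n-1}.
\]
Both integrands vanish identically when $\varphi$ and $\psi$ have disjoint supports, so $\mathcal Q(\varphi_i,\varphi_j)=0$ for $i\ne j$. Hence for any $(a_1,\dots,a_{m+1})\neq 0$,
\[
\mathcal Q\Bigl(\sum_i a_i\varphi_i,\sum_i a_i\varphi_i\Bigr)=\sum_i a_i^2\,\mathcal Q(\varphi_i,\varphi_i)<0.
\]
So $E=\operatorname{span}\{\varphi_1,\dots,\varphi_{m+1}\}$ is an $(m+1)$-dimensional subspace of $C_c^\infty(\overline{\{u>0\}})$ on which $\mathcal Q$ is negative definite. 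This contradicts the index being $m$, and therefore some $R_0$ works.

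\textbf{Points to check.}
\begin{itemize}
\item The admissible class: functions in $C_c^\infty\bigl((\R^n\setminus\overline{B_R})\cap\overline{\{u>0\}}\bigr)$ are restrictions of smooth compactly supported functions. Their supports stay disjoint from $\overline{B_R}$, so they are admissible test functions for the index in $\R^n$.
\item The boundary integral is over a classical (smooth) free boundary, so disjointness of supports kills it pointwise.
\end{itemize}
The only mildly delicate point is this bookkeeping of supports relative to $\overline{\{u>0\}}$; there is no genuine obstacle.
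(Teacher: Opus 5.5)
Your proof is correct: it is the standard disjoint-supports argument, and the paper gives no proof of its own but only cites \cite[Proposition 3.4]{BK24}, which rests on this same observation. A one-step version of your induction also works: fix an $m$-dimensional subspace on which $\mathcal Q$ is negative definite, take $R_0$ with all its supports inside $B_{R_0}$, and note that any negative direction supported in $\R^n\setminus\overline{B_{R_0}}$ is $\mathcal Q$-orthogonal to that subspace, so adding it would produce an $(m+1)$-dimensional negative subspace.
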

We also record a nondegeneracy property.
\begin{lemma}\label{lem:nondegcurv}
    Let $u$ be a classical solution to the Bernoulli problem in $B_1\subset \R^n$ with $0\in \FB(u)$ and $|D^2 u|\leq C_\circ$ in $\{u > 0\}\cap B_1$. Then, there is $C = C(n, C_\circ)$ such that, up to a rotation, $u \ge x_n -Cr^2$ in $B_r$ for all $r \in (0, 1/2)$. 
\end{lemma}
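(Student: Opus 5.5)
The plan is to exploit that $0\in\FB(u)$ and that $\{u>0\}$ is a smooth domain near $0$ with $|\nabla u|=1$ on the free boundary. Rotate so that $\nabla u(0)=e_n$, i.e., the inner normal to $\{u>0\}$ at $0$ is $e_n$. The desired inequality $u\ge x_n-Cr^2$ in $B_r$ is then a second-order Taylor bound, with two parts: points of $\{u>0\}$ and points of $\{u=0\}$.

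\emph{Positivity set.} Since $|D^2u|\le C_\circ$ there, $\nabla u$ is $C_\circ$-Lipschitz along segments contained in $\{u>0\}$. For a general point, connect it to $0$ through $\overline{\{u>0\}}$ and pass to the limit from interior points, using that $u$ is $C^{1,1}$ up to the free boundary (by the Hessian bound and smoothness of the domain). This gives $|u(x)-x_n|\le C|x|^2$ for $x\in\{u>0\}\cap B_{1/2}$, possibly after a preliminary step showing that $\{u>0\}\cap B_{1/2}$ is comparable to a Lipschitz/convex-like region near $0$.

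\emph{Zero set.} Here $u=0$, so we must show $x_n\le C|x|^2$ on $\{u=0\}\cap B_r$, i.e., that the free boundary lies below a paraboloid $\{x_n\le C|x'|^2\}$. I would argue as follows. On $\FB(u)$ we have $\nabla u=\nu_{\rm in}$ with $|\nabla u|=1$, and the Hessian bound makes the normal $C_\circ$-Lipschitz along the free boundary: its second fundamental form is controlled by $|D^2u|$ via $\mathrm{II}=-D^2u|_{T\FB}$. This yields a uniform curvature bound, so $\FB(u)\cap B_{c}$ is a graph $x_n=g(x')$ with $g(0)=0$, $\nabla g(0)=0$, $|D^2g|\le C$, and $c=c(n,C_\circ)$. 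Consequently $\{u=0\}\cap B_c\subset\{x_n\le C|x'|^2\}$. For $r\in(c,1/2)$ the claim is trivial after enlarging $C$, since $x_n-Cr^2\le r-Cc\,r<0$ once $C\ge 1/c$.

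\emph{Main obstacle.} The hard part is the uniform graph radius $c$, which must depend only on $(n,C_\circ)$ and not on the particular domain. A priori the free boundary could fold back, with another sheet entering $B_c$. To rule this out I would use the Hessian bound together with $|\nabla u|=1$ on $\FB$: on the set $\{|x|<1/(4C_\circ)\}\cap\{u>0\}$, $\nabla u$ stays within $1/4$ of $e_n$, so $\partial_nu>0$ there. Then $u$ is strictly increasing in $x_n$ along vertical segments inside the positive set, and a zero-set component cannot sit above a positive region. This needs the gradient bound on the whole positive set in the small ball, which in turn needs path-connectedness within $\{u>0\}$. I would obtain that from the free boundary being locally a smooth graph, using the implicit function theorem on $u$ with the quantitative $C^{1,1}$ bound.
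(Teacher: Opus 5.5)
Your overall plan (rotate so that $\nabla u(0)=e_n$, Taylor-expand $u-x_n$ using $|D^2u|\le C_\circ$, and use $|\nabla u|=1$ on $\FB(u)$ to turn the Hessian bound into a curvature bound) matches the paper's two-line proof. However, the way you resolve your ``main obstacle'' rests on claims that are false.

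Take $u=(x_n)_+ + (-x_n-\eps)_+$ in $B_1$ with $\eps>0$ tiny. It is a classical solution with $0\in\FB(u)$ and $D^2u\equiv 0$. Yet $\FB(u)\cap B_c$ consists of two sheets for every $c>\eps$. Also $\nabla u=-e_n$ on the lower component of $\{u>0\}\cap B_{1/(4C_\circ)}$. Finally, at $x=(0,-2\eps)$ one has $|u(x)-x_n|=3\eps$, which exceeds $C|x|^2=4C\eps^2$ once $\eps<3/(4C)$. So none of the following can be proved:
\begin{itemize}
\item single-graphicality of $\FB(u)\cap B_c$ with $c=c(n,C_\circ)$;
\item $\partial_n u>0$ on all of $\{u>0\}$ in a small ball;
\item the two-sided bound $|u-x_n|\le C|x|^2$ on $\{u>0\}\cap B_{1/2}$.
\end{itemize}
Components of $\{u>0\}$ that are not joined to $0$ by short paths in $\overline{\{u>0\}}$ carry no information about $\nabla u$, and no implicit-function argument changes this.

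The missing observation is that the claim is one-sided and trivial wherever $x_n\le C|x|^2$, since $u\ge 0$. So you only need to reach points from $0$ along paths you control, and rays from $0$ suffice.

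Given $x\in B_{1/2}$ with $x_n>\tfrac{C_\circ}{2}|x|^2$, put $d=x/|x|$. Then $d_n>0$, so the ray $s\mapsto sd$ enters $\{u>0\}$. As long as $(0,s]d\subset\{u>0\}$, integrating the Hessian bound along the segment gives $|\nabla u(\tau d)-e_n|\le C_\circ\tau$ for $\tau\le s$. Hence $u(sd)\ge s\,d_n-\tfrac{C_\circ}{2}s^2>0$ for all $s\le|x|$. A continuity argument then shows that the whole segment $(0,x]$ lies in $\{u>0\}$ and that $u(x)\ge x_n-\tfrac{C_\circ}{2}|x|^2$.

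This is exactly your ``a zero set cannot sit above a positive region'', applied along rays from $0$ rather than on the whole positive set. It proves the lemma with $C=C_\circ/2$ without any graph radius or curvature bound. Alternatively, you could keep your graph argument but apply it only to the connected sheet of $\FB(u)$ through $0$, using vertical segments above it, and dismiss everything below that sheet via $u\ge 0$.
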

\begin{proof}
    Let us assume $\nabla u(0) = e_n$. It now follows by observing that $v(x) := u(x) - x_n$ satisfies $v(0) = |\nabla v(0)| = 0$ and $|D^2 v|\le C_\circ$ in $\{u > 0\}\cap B_1$. Notice, also, that the Hessian bound implies the same curvature bound on the free boundary by the constant Neumann condition $|\nabla u | = 1$ on $\FB(u)$. 
\end{proof}
We will need the next lemma on solutions that are close to a vee.

\begin{lemma}[{\cite[Lemma 3.15]{CFFS25}}]
\label{cor_closetoV_disc_reg2}
Let $n\ge 2$. Given $C_1\ge 1$ there exists $\eps_1>0$, depending only on $n$ and $C_1$, such that the following holds. 

Let $u$ be a global classical solution to the Bernoulli problem in $\R^n$. Suppose that $|D^2u| \le C_1\varrho^{-1}$ in $B_{2\varrho}\cap \{u>0\}$ and
\begin{equation}\label{sjiowoihw2}
\big|u- V_{0,e_n}\big|\le \eps\varrho \le \eps_1\varrho\quad \mbox{in } B_{2\varrho},
\end{equation}
where $e_n$ is the $n$-th vector in the canonical basis.
Then
\[\varrho^2\|D^2 u\|_{L^\infty(\{u > 0\}\cap B_{\varrho})}\leq C \eps \varrho\]
for some $C$ depending only on $n$. Moreover, 
\[
\{u>0\} = \{x_n > \tilde g^{(+)}(x_1, \dots , x_{n-1})\} \cup \{x_n < \tilde g^{(-)}(x_1, \dots , x_{n-1})\} \qquad\text{in}\quad B_\varrho,
\]
where $\tilde g^{(\pm)} : B_\varrho' \to \mathbb{R}$,  $\tilde g^{(-)}< \tilde g^{(+)}$, and 
\[
\|\tilde g^{(\pm)}\|_{L^\infty(B_\varrho')} + \varrho^2 \|D^2 \tilde g^{(\pm)}\|_{L^\infty(B_\varrho')} \leq C \eps \varrho
\]
for some $C$ depending only on $n$.
\end{lemma}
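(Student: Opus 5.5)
Throughout, $V_{0,e_n}(x)=|x_n|$ is the vee. By scaling ($u\mapsto u(\varrho x)/\varrho$) we may assume $\varrho=1$, so that $|D^2u|\le C_1$ in $B_2\cap\{u>0\}$ and $\big|u-|x_n|\big|\le\eps\le\eps_1$ in $B_2$. The plan has four steps: (1) show the zero set is a thin nonempty slab; (2) use the Hessian bound to show the free boundary is two nearly horizontal sheets with no necks between them; (3) apply one-phase improvement of flatness to each half; (4) feed the graph bounds back into the Hessian estimate. We expect step (2) to be the main obstacle.

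\textbf{Step 1: location of the zero set.} From $\big|u-|x_n|\big|\le\eps$ we get $u>0$ in $\{|x_n|>\eps\}\cap B_2$, so $\{u=0\}\cap B_2\subset\{|x_n|\le\eps\}$. Next we show every vertical line $\{x'\}\times\R$ with $|x'|<3/2$ meets $\{u=0\}$. If it did not, $u$ would be harmonic on a ball $B_{1/4}((x',0))$, or on a tube around it that misses the thin zero set. The mean value property over such a ball would then give $u(x',0)\ge c(n)-C\eps$, which contradicts $u(x',0)\le\eps$ once $\eps_1$ is small.

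\textbf{Step 2: no necks, via the curvature bound.} This is where $C_1$ enters, and it is the step we expect to be hardest. Since $|\nabla u|=1$ on $\FB(u)$ and $|D^2u|\le C_1$, the free boundary has curvature at most $CC_1$. Hence near any $x_0\in\FB(u)\cap B_{3/2}$ it is a graph over its tangent plane at scale $c/C_1$, with the zero set on one side. By Lemma~\ref{lem:nondegcurv}, the zero set contains a paraboloid-type region
\[
\{(x-x_0)\cdot\nu< -CC_1|x-x_0|^2\}\cap B_{c/C_1}(x_0),
\]
where $\nu$ is the inner unit normal $\nabla u(x_0)$. This region lies inside the slab $\{|x_n|\le\eps\}$ only if $|\nu\mp e_n|\le C(C_1)\eps^{1/2}$. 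Choosing $\eps_1=\eps_1(n,C_1)$ small, every free boundary normal is therefore almost vertical. Consequently $\FB(u)\cap B_{3/2}$ is a union of graphs over $B'_{3/2}$ with small slope. Combined with Step~1, each vertical line meets $\{u=0\}$ in a single interval, bounded by an upper sheet (normal $\approx e_n$) and a lower sheet (normal $\approx -e_n$). This yields
\[
\{u>0\}=\{x_n>\tilde g^{(+)}\}\cup\{x_n<\tilde g^{(-)}\}\quad\text{in }B_{3/2},\qquad \tilde g^{(-)}<\tilde g^{(+)},
\]
with $\|\tilde g^{(\pm)}\|_{L^\infty}\le\eps$. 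The delicate points are ruling out thin necks joining the upper and lower positive phases and ruling out thin zero-set islands; both follow because either would force curvature of order $1/\eps\gg C_1$.

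\textbf{Step 3: improvement of flatness on each half.} Let $u^+:=u\mathbf 1_{\{x_n>\tilde g^{(+)}\}}$. It is a classical one-phase solution in $B_{3/2}$ with $|u^+-(x_n)_+|\le C\eps$. De~Silva's improvement of flatness \cite{desilva2011rhs} gives $\|\tilde g^{(+)}\|_{C^{1,\alpha}(B'_{5/4})}\le C\eps$. The linear higher-order estimates for the free boundary (\cite{Kinderlehrer-Nirenberg, Lian-Zhang} or \cite[Lemma 3.7]{CFFS25}) then upgrade this to $\|D^2\tilde g^{(+)}\|_{L^\infty(B'_1)}\le C\eps$. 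The same argument applies to $u^-$ with $-e_n$ in place of $e_n$.

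\textbf{Step 4: the Hessian bound.} The function $u^+-x_n+\tilde g^{(+)}$-type corrections are small harmonic functions with Neumann data of size $C\eps$ on a $C^{2}$-flat boundary. Boundary Schauder estimates therefore give $|D^2u|\le C\eps$ in $\{u>0\}\cap B_1$, which is the claimed bound $\varrho^2\|D^2u\|\le C\eps\varrho$ after undoing the scaling. The constant $C$ depends only on $n$, because $C_1$ was used only qualitatively, to fix $\eps_1$.
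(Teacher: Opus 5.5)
Your architecture is the natural one (the paper itself simply imports this lemma from \cite{CFFS25}). You use $|D^2u|\le C_1$ only to obtain the two-sheet picture, and then run one-phase improvement of flatness and the linear higher-order estimates on each half, so that the final constants depend only on $n$. Steps~3--4 are fine.

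Step~2, however, carries all the weight and is wrong as written. Lemma~\ref{lem:nondegcurv} is a \emph{lower} bound, $u\ge (x-x_0)\cdot\nu-C|x-x_0|^2$. It places a paraboloid inside the \emph{positive} phase on the $+\nu$ side. Nothing places a region of size $c/C_1$ inside the zero set on the $-\nu$ side, and in the very situation the lemma describes this is false: for $u=(|x_n|-\eps)_+$ the zero set is a slab of width $2\eps$, and $B_{c/C_1}(\eps e_n)$ reaches into the lower positive phase. In fact your region contains a ball of radius comparable to $1/C_1\gg\eps$, so it can never lie in $\{|x_n|\le\eps\}$, whatever $\nu$ is. Taken literally, the argument would give $\FB(u)\cap B_{3/2}=\varnothing$.

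The comparison has to be made on the positive side. Set $f(s):=u(x_0+s\nu)$. Then $f(0)=0$, $f'(0)=1$, and $f''\ge -C_1$ as long as the segment stays in $\{u>0\}$, which therefore holds for $0<s\le 1/(2C_1)$. Since $|x_0\cdot e_n|\le\eps$,
\[
s-\tfrac{C_1}{2}s^2\le f(s)\le |x_0\cdot e_n+s\nu_n|+\eps\le s|\nu_n|+2\eps .
\]
Choosing $s=\sqrt{\eps/C_1}$ (admissible once $\eps_1\le 1/(4C_1)$) gives $1-|\nu_n|\le 3\sqrt{C_1\eps}$. Equivalently, the sheet through $x_0$ is a graph over its tangent plane at scale $c/C_1$ and must itself stay inside the slab.

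With all normals almost vertical, a covering argument shows that every component of $\FB(u)$ in $B'_{3/2}\times(-2\eps,2\eps)$ is a full graph over $B'_{3/2}$. Necks and zero-set islands are then excluded automatically, since they require points with horizontal normal. What is \emph{not} excluded, and what your curvature heuristic cannot exclude, is a stack of $2k\ge 4$ parallel sheets, i.e.\ thin positive layers sandwiched between two zero layers. These can be perfectly flat, so the curvature of $\FB(u)$ says nothing. Here you need the Hessian of $u$ itself. Across such a layer, whose height is at most $2\eps$, $\partial_n u$ passes from $\nu_n\approx 1$ on its lower sheet to $\nu_n\approx -1$ on its upper sheet. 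This forces $|\partial_{nn}u|\ge 1/(2\eps)>C_1$ somewhere.

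Similarly, the mean-value argument of Step~1 does not work: a vertical line missing $\{u=0\}$ gives no ball of fixed size on which $u$ is harmonic. Use instead $u(x',\eta)+u(x',-\eta)-2u(x',0)\ge 2\eta-4\eps$ against the upper bound $C_1\eta^2$ with $\eta=1/C_1$. Alternatively, note that $\FB(u)\cap B_1\ne\varnothing$ by the mean value property at $0$, and that the sheets are full graphs. With these repairs your outline goes through.
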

The following important result shows that entire solutions are close to a wedge solution at all large scales where $\FB(u)$ is present (assuming $u$ is not one-dimensional).

\begin{lemma}[Compactness toward a two-plane configuration]\label{lem:blowdown}
For every $\varepsilon\in(0,1)$ there exists $\delta=\delta(n,\varepsilon)>0$ such that the following holds.

Let $n< n_*$, and let $u$ be a classical solution in $\R^n$, stable in
$\R^n\setminus B_\delta$. Assume that
\[
W(u,2)- W(u,1)\le \delta 
\]
and 
\[
\FB(u)\cap \bigl(B_2\setminus \overline{B}_1\bigr)\neq\varnothing.
\]
Then, there exists a unit vector $e\in \mathbb S^{n-1}$ such that either
\begin{equation}\label{eq:8163985rtgawou}
    \|u-(e\cdot x)_+\|_{L^\infty(B_{16})}\le \varepsilon \qquad \mbox{or} \qquad 
\|u-|e\cdot x|\|_{L^\infty(B_{16})}\le \varepsilon .
\end{equation}
Moreover, in the second case, letting $\mathcal R_e:\R^n\to\R^n$ be a rotation sending $e$ to $e_n$, we have:
\[
\mathcal R_e(\{u>0\}) = \{x_n > \tilde g^{(+)}(x')\} \cup \{x_n < \tilde g^{(-)}(x')\} \qquad\text{in}\quad B_8\setminus \overline{B}_{1/2},
\]
where $\tilde g^{(\pm)} : B_8' \to \mathbb{R}$,   $\tilde g^{(-)}< \tilde g^{(+)}$, and there is $C=C(n)>0$ such that
\[
\|\tilde g^{(\pm)}\|_{L^\infty(B_8')} + C^{-1}\|D^2 \tilde g^{(\pm)}\|_{L^\infty(B_8')} \leq \eps\leq 1/4.
\]

In particular,
\[
\FB(u)\cap \partial B_r\neq\varnothing
\qquad\text{for every }r\in(1,4).
\]
\end{lemma}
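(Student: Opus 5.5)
We argue by contradiction and compactness. Fix $\varepsilon$ and suppose there are classical solutions $u_k$ in $\R^n$, stable in $\R^n\setminus B_{1/k}$, with $W(u_k,2)-W(u_k,1)\le 1/k$ and $\FB(u_k)\cap(B_2\setminus\overline B_1)\neq\varnothing$, but for which neither alternative in \eqref{eq:8163985rtgawou} holds for any $e$.

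\textbf{Compactness.} By Lemma~\ref{lem:grad1bound}, $|\nabla u_k|\le 1$. Since $u_k$ vanishes somewhere in $B_2$, the $u_k$ are uniformly Lipschitz and locally bounded, so up to a subsequence $u_k\to u_\infty$ locally uniformly. Standard nondegeneracy and compactness for viscosity/variational solutions of the one-phase problem show that $u_\infty$ is a (stationary) solution, that $\{u_k>0\}\to\{u_\infty>0\}$ in Hausdorff distance and in $L^1_{\rm loc}$, and that $\nabla u_k\to\nabla u_\infty$ in $L^2_{\rm loc}$. Hence $W(u_k,r)\to W(u_\infty,r)$, and Lemma~\ref{lem:weissformula} gives $W(u_\infty,2)=W(u_\infty,1)$. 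So $u_\infty$ is one-homogeneous in $B_2\setminus B_1$; by unique continuation of the harmonic part it is homogeneous in the whole cone it generates, i.e.\ a homogeneous solution. Because $\FB(u_k)$ meets $\overline{B_2}\setminus B_1$, nondegeneracy ensures $\FB(u_\infty)\neq\varnothing$ there.

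\textbf{Classification of the limit.} Stability passes to the limit on compact sets away from the origin. Here one uses the smooth convergence available wherever $u_\infty$ is flat, together with the stability inequality (a Sternberg--Zumbrun type computation) to control curvature. This makes $u_\infty$ a stable homogeneous solution in $\R^n\setminus\{0\}$, possibly with two-plane (wedge) singular points. The main obstacle is this classification. We first dimension-reduce at singular points of the cone, and then use $n<n_*\le n_*^2$ to conclude that every stable one-homogeneous classical cone is one-dimensional. The cones with multiplicity then consist of wedge points, so they are unions of half-planes and, up to rotation, a single half-plane $(e\cdot x)_+$ or the two-plane $|e\cdot x|$. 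Multiplicity-one regular points give the half-plane. Where the positive phases touch, one gets $|e\cdot x|$; $n<n_*^1$ excludes other stable global configurations that could arise as intermediate blow-ups. Either way, $u_k\to u_\infty$ uniformly on $B_{16}$ contradicts the assumption, which proves \eqref{eq:8163985rtgawou}.

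\textbf{Structure in the two-plane case and the last assertion.} Rescale and apply Lemma~\ref{cor_closetoV_disc_reg2} on balls $B_{2\varrho}(y)$ with $y\in B_8\setminus\overline B_{1/2}$, $\varrho\sim|y|/8$, after translating so that $y$ is centred. The Hessian hypothesis $|D^2u|\le C_1\varrho^{-1}$ follows from interior and boundary regularity in the flat region together with the graph structure near the limiting planes. This gives local graphs $\tilde g^{(\pm)}$ with the stated $L^\infty$ and $C^2$ smallness, and these patch together over $B_8'$ by uniqueness of the graph representation. In the half-plane case, the improvement of flatness of \cite{desilva2011rhs} gives a single flat graph instead. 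In both cases the free boundary is a small graph over $\{e\cdot x=0\}$ on $B_8\setminus \overline B_{1/2}$, so it meets every sphere $\partial B_r$ with $r\in(1,4)$.
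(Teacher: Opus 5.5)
There is a genuine gap. You never establish the a priori curvature estimate on which the paper's proof rests, namely
\[
|D^2 u|\le \frac{C}{r}\quad\text{in}\quad \{0<{\rm dist}(x,\{u=0\})\le r/3\}\cap\bigl(B_{3r}\setminus \overline{B_{r/3}}\bigr)\qquad\text{for all } r\ge 6\delta .
\]
This is where stability in $\R^n\setminus B_\delta$ and $n<n_*^1$ actually enter, via the curvature-rescaling blow-up of \cite{CFFS25}: if the scaled curvature were unbounded, rescaling by it would produce a stable, non-flat entire classical solution in $\R^n$. Your compactness and classification steps silently need this estimate in two ways. First, classical solutions are only critical points, so there is no ``standard'' nondegeneracy or compactness giving Hausdorff convergence of $\{u_k>0\}$, strong convergence of $\nabla u_k$, and hence $W(u_k,r)\to W(u_\infty,r)$. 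In the paper these follow from the Hessian bound, through Lemma~\ref{lem:nondegcurv} and a Harnack chain. Second, $n_*^2$ only classifies \emph{classical} cones, so you must know that $u_\infty$ is classical away from the origin except at tangential collisions of two sheets. A Sternberg--Zumbrun computation gives only scale-invariant $L^2$ bounds on $D^2u$, not pointwise control. The dimension reduction you invoke presupposes an $\varepsilon$-regularity and stratification theory for limits of stable critical points, which is not available. With the curvature estimate in hand, the only possible singularities of $u_\infty$ off the origin are such collisions. At a collision point $y$ the blow-up $|x\cdot e|$ has maximal Weiss density, so monotonicity centred at $y$ (as in \cite[Proof of Lemma 4.4]{CFFS25}) makes $u_\infty$ homogeneous about $y$, whence $u_\infty=|(x-y)\cdot e|$. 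Your assertion that cones with multiplicity are ``unions of half-planes'' is not justified as written.

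The same omission breaks the last part. Lemma~\ref{cor_closetoV_disc_reg2} takes $|D^2u|\le C_1\varrho^{-1}$ as a \emph{hypothesis}. This bound does not follow from $L^\infty$-closeness to $|e\cdot x|$ together with a small drop of the Weiss energy. Consider translated rescalings $\lambda u((x-y)/\lambda)$, with $\lambda\to 0$, $1<|y|<2$ and $y_3=0$, of the axially symmetric solutions of Theorem~\ref{thm:axymR3} (which exist by \cite{LWW21}). They converge uniformly to $|x_3|$ on $B_{16}$, while the two positive phases are joined by a neck of vanishing size at $y$. Near such a neck there is no flat region and no graph structure. So deriving the Hessian bound from ``regularity in the flat region together with the graph structure near the limiting planes'' is circular. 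What excludes these necks in $B_8\setminus\overline{B}_{1/2}$ is exactly the stability hypothesis, through the curvature estimate above, and that estimate is the missing first step of your argument.
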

\begin{proof} 
    From the assumption that entire stable solutions are flat, it follows (arguing\footnote{The results for curvature estimates are stated for $n = 3$ in \cite{CFFS25}, but the exact same proof works for $n < n_*$ more generally.} as in \cite{CFFS25}) that
    \begin{equation}\label{eq:89g4iubgilb0}
        |D^2 u|\leq \frac{C}{r}\quad \mbox{in}\quad \{0<{\rm dist}(x, \{u = 0\})\leq r/3\}\cap (B_{3r}\setminus \overline{B_{r/3}}),
    \end{equation}
    for every $r\geq 6\delta$.
        
    By Lemmas~\ref{lem:grad1bound} and \ref{lem:nondegcurv}, together with a Harnack chain argument and the fact that $\FB(u) \cap (B_2\setminus \overline{B_1})\neq\varnothing$, we also have:

    \begin{equation}\label{eq:78t287folvwlv}
        u\leq C_M\quad  \mbox{in}\quad B_M,
    \end{equation}
    and
    \begin{equation}\label{eq:15aiosghf09qt40}
        u\geq c_{M}>0\quad  \mbox{in}\quad \left\{{\rm dist}(x, \{u = 0\})\geq M^{-1} \right\}\cap (B_{M}\setminus \overline{B_{M^{-1}}}).
    \end{equation}
    Together with \eqref{eq:89g4iubgilb0}, this shows that
    \begin{equation}\label{eq:89g4iubgilb02}
        |D^2 u|\leq C_M\quad \mbox{in}\quad  (B_{M}\setminus \overline{B_{M^{-1}}}).
    \end{equation}

    Let us show \eqref{eq:8163985rtgawou} by contradiction. Assume that there exist sequences $\delta_i=\frac{1}{i}$, $u_i$, with $W(u_i,2)-W(u_i,1)\leq \delta_i\to 0$, contradicting the thesis. By our estimates above, up to a subsequence $u_i \to u_\infty$ in $C_{loc}^0(\R^n\setminus\{0\})$, where $u_\infty$ is an inner stable and stationary solution. Moreover, $u_\infty \not\equiv 0$, e.g., by passing the nondegeneracy condition \eqref{eq:15aiosghf09qt40} to the limit. In fact, thanks to \eqref{eq:15aiosghf09qt40} we see that the sets $\{u_k>0\}$, $\FB(u_k)$, and $\{u_k=0\}$ converge---locally in the Hausdorff distance sense---to $\{u_\infty>0\}$, $\FB(u_\infty)$, and $\{u_\infty=0\}$.
    
    These properties show that the Weiss energies of the $u_i$ locally converge to that of $u_\infty$; by Weiss's monotonicity formula, $u_\infty$ is 1-homogeneous. Two cases arise:
    \begin{itemize}
        \item If two parallel sheets of $\FB(u_i)$ collapse together (tangentially, by the curvature estimates) at some $y\in \FB(u_\infty)\setminus\{0\}$, then $\frac{1}{r}u_\infty(rx+y)\xrightarrow{r\to 0} |x\cdot e|$ for some $e\in\Sph^{n-1}$. An argument using Weiss's monotonicity formula again, cf. \cite[Proof of Lemma 4.4]{CFFS25}, shows then that $u_\infty$ is also conical around $y$, thus $u_\infty \equiv |(x-y)\cdot e|$, contradicting the negation of \eqref{eq:8163985rtgawou}.
    
        \item Otherwise, $u_\infty$ is a classical solution outside of the origin. By \eqref{eq:nstardef} then $u_\infty$ is a half-space solution, contradicting the negation of \eqref{eq:8163985rtgawou} again.
    \end{itemize}

    Assume now that we are in the second case in \eqref{eq:8163985rtgawou}. Perform a rotation so that $e=e_n$. Up to making $\delta$ smaller, we can then apply Lemma~\ref{cor_closetoV_disc_reg2} around every $x\in (B_8\setminus \overline{B}_{1/2})\cap \FB(u)$, which gives the graphical description of $\{u>0\}$. Finally, from this and $|\tilde g^{\pm}|\leq 1/4$ we find that $\FB(u)\cap \partial B_r\neq\varnothing$ for every $r\in(1,4)$.
\end{proof}
We can now give:
\begin{proof}[Proof of Proposition~\ref{prop:FBdichotomy}]
Assume $\FB(u)$ to be unbounded.
    After a dilation, we can assume $u$ to be stable outside of $B_1$ by Lemma~\ref{basultokamburov}.
    Moreover, from $|\nabla u|\leq 1$ (see Lemma~\ref{lem:grad1bound}) we have $W(R)\leq |B_1|$, and in particular $W(R)$ is bounded; by monotonicity (Lemma~\ref{lem:weissformula}), given $\delta>0$ there exists then $R_0$ such that $W(R)-W(r)\leq \delta$ for every $R_0\leq r\leq R$.
    
    We can then apply Lemma~\ref{lem:blowdown}---appropriately rescaled---to $u$ for every $r\geq R_0$ such that $\FB(u)\cap \partial B_r\neq \emptyset$, with associated $\eps_r\to 0$ as $r\to\infty$. Up to making $R_0$ larger, if $r\geq R_0$ the first case in the Lemma never holds: if, arguing by contradiction, there were $r_k\to\infty$ with 
    $$\|u-(e_{r_k}\cdot x)_+\|_{L^\infty(B_{16r_k})}\leq \eps_{r_k}\xrightarrow{r_k\to\infty}0,$$
    the improvement of flatness result in \cite{desilva2011rhs} would imply that $u\equiv (x\cdot e)_+$ for some $e\in\Sph^{n-1}$, and $u$ would be one-dimensional.

    Let now $r\geq R_0$ with $\FB(u)\cap \partial B_r\neq \emptyset$, which we can find since we are assuming $\FB(u)$ to be unbounded. By the previous argument, the second case in Lemma~\ref{lem:blowdown} applies, i.e.
    $$\|u-|e_{r}\cdot x|\|_{L^\infty(B_{16r})}\leq\eps_r\xrightarrow{r\to \infty} 0.$$
    Moreover, $\FB(u)\cap \partial B_{s}\neq \emptyset$ for every $s\in (r,4r)$. But then, in particular we can apply Lemma~\ref{lem:blowdown} again with $2r$ in place of $r$; iterating this, we conclude that in fact $\FB(u)\cap \partial B_r\neq \emptyset$ for every $r\geq R_0$. Moreover, putting $A_r:=B_{8r}\setminus B_r$, Lemma~\ref{lem:blowdown} gives a decomposition $\{u>0\}\cap A_r=U_r^+\cup U_r^-$, with associated $e_r\in\Sph^{n-1}$ such that
    \[
\mathcal R_{e_r}(U_r^{+}) = \{ x_n > \tilde g_r^{(+)}(x')\}\cap A_r,\qquad \mathcal R_{e_r}(U_r^{-}) = \{ x_n < \tilde g_r^{(-)}(x')\}\cap A_r,
\]
with
$\tilde g_r^{(\pm)} : B_{8r}' \to \mathbb{R}$,  $\tilde g_r^{(-)}< \tilde g_r^{(+)}$, and
\[
\|\tilde g_r^{(\pm)}\|_{L^\infty(B'_{8r})} + C^{-1}\|D^2 \tilde g_r^{(\pm)}\|_{L^\infty(B'_{8r})} \leq r\eps_r\leq r/4.
\]
By considering the overlap among the $A_r$, we see that---up to changing the sign of some of the $e_r$, and making $\eps_r$ small enough by taking $r$ large enough---we can ensure that $|e_{\lambda r}-e_r|<1/8$ for all $\lambda\in(1/128,128)$, thus also $U_{\lambda r}^+\cap U_r^-=\emptyset$. Defining $U^+=\cup_{r\geq R_0} U^+_r$ and $U^-=\cup_{r\geq R_0} U^-_r$ it follows that $U^+\cap U^-=\emptyset$, and we put $u_+=u\, \mathbf{1}_{U^+}$ and $u_-=u\, \mathbf{1}_{U^-}$. Finally, letting $e_r^\pm = \pm e_r$, since $|u-|e_r\cdot x||\leq o(r)$ in $B_{2r}\setminus B_{r/2}$ it follows that $|u_\pm-(e_r^\pm\cdot x)_+|\leq o(r)$ as well---for instance, by a compactness-plus-contradiction argument. This shows that $H_0(u_{\pm},r)\leq o(r)$ and concludes the proof.
\end{proof}

\subsection{Asymptotic structure in the non-compact free boundary case}\label{sec:asympstruc}
Combining our results up to now, we obtain a detailed expansion for $u$ in the non-compact case:

\begin{corollary}\label{cor:errplus0}
    Let $3\leq n < n_*$ and $\alpha\in(0,1)$. Let $u:\R^n\to[0,\infty)$ be a classical solution to the Bernoulli problem with finite Morse index, non-compact free boundary, and which is not one-dimensional.

    After a rescaling, rotation, and translation, we have the decomposition $u=u_++u_-$ in $D:=\R^n\setminus (B_{1}'\times [-1,1])$, where each of the $u_\pm$ is a classical solution to the Bernoulli problem in $D$. Letting $\Omega_\pm=\{u_\pm>0\}\subset D$, we have $\Omega_+\cap \Omega_-=\emptyset$ and
    \[
\Omega_\pm = \{|x'|> 1,\,   \pm x_n > g^{(\pm)}(x')\} \cup \{|x'|\le 1,\,   \pm x_n > 1\} ,
\]
for some smooth $g^{(\pm)} : \R^{n-1}\setminus B_1' \to [-1,1]$.

Moreover, the $u_\pm$ satisfy
   \[
  |u_\pm -  v_\pm |  + |x| | \nabla (u_\pm-v_\pm)|\le  |x|^{1-n-\alpha} \quad \mbox{in }\Omega_\pm,
  \]
  where 
  \[v_\pm =  \pm x_n-  b  +\frac{c_\pm}{|x|^{n-2}} + \frac{d_\pm\cdot x}{|x|^{n}},\]
 with $b\geq 0$, $c_\pm\in\R$, $d_\pm\in \R^n$,  and satisfying $(d_\pm)_n = \pm (n-2)bc_\pm$  and $|b| +|c_\pm| + |d_\pm | \le 1/2$.
\end{corollary}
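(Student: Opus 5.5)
The plan is to combine Proposition~\ref{prop:FBdichotomy} with Corollary~\ref{cor1} and Theorem~\ref{thm:highordexp}, applied separately to $u_+$ and $u_-$. The only genuinely new points are:
\begin{itemize}
\item the two asymptotic planes are antipodal;
\item after a translation the two shifts coincide and are nonnegative;
\item a final large dilation makes all constants and coefficients small.
\end{itemize}

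\textbf{Step 1: decomposition and flattening.} Since $\FB(u)$ is non-compact, Proposition~\ref{prop:FBdichotomy} gives $R_0$ and a decomposition $u=u_++u_-$ in $\R^n\setminus B_{R_0}$. The supports are disjoint, and each $u_\pm$ is a classical solution with $H_0(u_\pm,r)=o(r)$. Rescale by $u^\lambda(x)=u(\lambda x)/\lambda$ with $\lambda\ge R_0$ large. Then each $u^\lambda_\pm$ is a solution in $\R^n\setminus\overline{B_1}$ with $H_0(u^\lambda_\pm,r)\le \eta_0 r$ for all $r\ge 2$, by the scaling \eqref{eq:scaling} and since $H_0(u_\pm,\lambda r)/(\lambda r)\to0$. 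Corollary~\ref{cor1} then yields, for each sign, a direction $e_\pm\in\Sph^{n-1}$ and a shift $|b_\pm|\le 1/16$ such that $|u^\lambda_\pm-(e_\pm\cdot x-b_\pm)|\le C|x|^{3-n-\alpha}$ on the positivity sets. It also gives graphical free boundaries in the direction $e_\pm$ outside a cylinder.

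\textbf{Step 2: antipodality, common shift, sign of $b$.} By construction in the proof of Proposition~\ref{prop:FBdichotomy}, at every large scale $u$ is $o(r)$-close to $|e_r\cdot x|$, with $u_\pm$ close to $(\pm e_r\cdot x)_+$. Comparing with the conclusions of Corollary~\ref{cor1} at large scales forces $e_\pm=\lim(\pm e_r)$, hence $e_-=-e_+$. A direct argument also works: the half-spaces $\{e_\pm\cdot x>b_\pm\}$ must be asymptotically disjoint because $\{u_+>0\}\cap\{u_->0\}=\emptyset$, and two half-spaces with non-antipodal normals intersect in a cone.

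Rotate so that $e_+=e_n$, and translate in the $x_n$ direction by $(b_--b_+)/2$. This makes $u_+\approx x_n-b$ and $u_-\approx -x_n-b$ with a common $b$. The free boundary graphs satisfy $g^{(+)}(x')\to b$ and $g^{(-)}(x')\to -b$ as $|x'|\to\infty$; here $g^{(-)}$ denotes the lower graph, so that $\Omega_-$ is $\{x_n<g^{(-)}\}$. Disjointness of the positivity sets gives $g^{(-)}\le g^{(+)}$, and letting $|x'|\to\infty$ gives $-b\le b$, i.e.\ $b\ge0$.

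\textbf{Step 3: higher-order expansion and final rescaling.} Now apply Theorem~\ref{thm:highordexp} to each $u_\pm$; for $u_-$, first reflect $x_n\mapsto -x_n$. The translated function is still a solution in an exterior domain of the required form after one more harmless dilation. This gives $v_\pm=\pm x_n-b+c_\pm|x|^{2-n}+d_\pm\cdot x|x|^{-n}$. The relation $(d_\pm)_n=\pm(n-2)bc_\pm$ holds, with the sign coming from the reflection. The bounds \eqref{expansion222}--\eqref{expansion222_grad} hold with a constant $C$.

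Finally, dilate by $\mu\gg1$ via $u\mapsto u(\mu\,\cdot)/\mu$. The coefficients scale as $b\mapsto b/\mu$, $c_\pm\mapsto c_\pm\mu^{1-n}$ and $d_\pm\mapsto d_\pm\mu^{-n}$, which preserves the relation for $d_n$. The error scales as $C\mu^{-n-\alpha}|x|^{1-n-\alpha}$, and its gradient analogue likewise. Choosing $\mu$ large therefore makes the combined error at most $|x|^{1-n-\alpha}$ and the coefficients at most $1/2$. The same dilation shrinks the bad ball $B_{R_0}$ and the $C^1$-small graph region. The graph description from Corollary~\ref{cor1} then becomes the stated description of $\Omega_\pm$ outside $B_1'\times[-1,1]$, with $|g^{(\pm)}|\le 1$, and $\{|x'|\le1,\ \pm x_n>1\}\subset\Omega_\pm$.

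\textbf{Main obstacle.} The delicate point is bookkeeping of domains. One must ensure that after the translation and dilations the decomposition $u=u_++u_-$ is valid on all of $D=\R^n\setminus(B_1'\times[-1,1])$, and not merely outside a ball. This needs the graphs from Corollary~\ref{cor1} to cover the whole region $|x'|>1$ of $D$. I would handle it by choosing $\mu$ so large that the ball where the decomposition may fail lies inside the cylinder. The remaining part of $D$, with $|x'|\le1$ and $|x_n|>1$, is handled by the flatness estimate from Corollary~\ref{cor1}, which puts it inside $\{u_+>0\}$ or $\{u_->0\}$.
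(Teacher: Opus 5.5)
Your proposal is correct and follows essentially the same route as the paper. You decompose via Proposition~\ref{prop:FBdichotomy}, apply Corollary~\ref{cor1} and Theorem~\ref{thm:highordexp} to each $u_\pm$, get $e_-=-e_+$ from the disjoint supports, translate vertically to equalize the shifts (with $b\ge 0$ from the ordering of the asymptotic half-spaces), and dilate to normalize; the only difference is that you translate before invoking Theorem~\ref{thm:highordexp}, whereas the paper expands first, checks that $(d_\pm)_n=\pm(n-2)bc_\pm$ survives the translation, and mentions your re-application as an alternative.
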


\begin{proof}
    Decompose $u=u_++u_-$ via Proposition~\ref{prop:FBdichotomy}. By \eqref{flatnesshypothesisthm2}, we can apply Corollary~\ref{cor1} and Theorem~\ref{thm:highordexp}---appropriately rescaled---to each of $u_+$ and $u_-$, separately. We obtain Ans\"atze $v_\pm=e_\pm\cdot x-  b_\pm  +\frac{c_\pm}{|x|^{n-2}} + \frac{d_\pm\cdot x}{|x|^{n}}$, where $e_\pm\in\Sph^{n-1}$, such that
    $$|u_\pm- v_\pm|+|x||\nabla (u_\pm- v_\pm)|\leq C|x|^{1-n-\alpha}\quad \mbox{in}\quad \Omega_\pm,$$
    and an associated graphicality condition for the $\Omega_\pm$ in the directions $e_\pm$. Now, since ${\rm supp}\, u_+ \cap {\rm supp}\, u_- = \emptyset$, it follows that necessarily $e_-=-e_+$; hence, after a rotation we have $e_\pm=\pm e_n$. From this, we see that $(d_\pm)_n = \pm (n-2)bc_\pm$ (by the relation between the coefficients from Theorem~\ref{thm:highordexp}) and that $\Omega_\pm$ satisfy the graphicality condition in $x_n$ from the statement. Similarly, necessarily $b_+\geq -b_-$, and considering the translated solution $u(x', x_n+\frac{b_+-b_-}{2})$ instead we can impose that $b_+=b_-=:b\geq 0$ as well. Notice that by translating we are also altering the coefficients, e.g., we need to expand $c_\pm |x+\tfrac{b_+-b_-}{2}e_n|^{2-n}$ around $c_\pm |x|^{2-n}$, but the relationship between them remains (one sees this either by direct computation, or by just reapplying Theorem~\ref{thm:highordexp}). Up to rescaling again so that the bounds on the $b,c_\pm,d_\pm$ and the error terms hold, we obtain the result as stated.  
\end{proof}

We find then a natural lower barrier for our solution:
\begin{lemma}\label{lem:barrier}
In the setting of Corollary~\ref{cor:errplus0}, it holds $u\ge |x_n| -b$.
\end{lemma}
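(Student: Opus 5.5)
We plan to prove $u\ge |x_n|-b$ by a sliding (comparison) argument applied separately in the upper and lower half-spaces, using the half-plane solutions $(\pm x_n - b - t)_+$ as barriers. By symmetry of the setting in Corollary~\ref{cor:errplus0} it suffices to show $u\ge x_n-b$ in $\{x_n>b\}$; the lower half follows in the same way with $u_-$ and $-x_n$. Since $u\ge 0$, the inequality is trivial wherever $x_n\le b$.

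Consider the family $w_t(x):=(x_n-b-t)_+$ for $t\ge 0$. For $t$ large we first check that $u\ge w_t$ everywhere. Far from the origin, the expansion of Corollary~\ref{cor:errplus0} gives $u_+ = x_n - b + O(|x|^{2-n})$ in $\Omega_+$, and $\Omega_+$ contains $\{x_n>1\}\setminus B_1'\times[-1,1]$ (indeed $g^{(+)}\le 1$). Hence $u\ge x_n-b-C$ outside a compact set. In a compact region, $u\ge 0$ and $w_t\equiv 0$ for $t$ large. Let $t_*:=\inf\{t\ge 0: u\ge w_t \text{ in } \R^n\}$, and assume for contradiction that $t_*>0$. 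Then $u\ge w_{t_*}$, and there are two scenarios: either there is a touching point at finite distance, or the touching happens "at infinity".

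\emph{Touching at infinity.} This is excluded using the expansion. In $\Omega_+$ we have $u-(x_n-b) = c_+|x|^{2-n}+O(|x|^{1-n})$, so $u-(x_n-b-t_*)\ge t_* - C|x|^{2-n}\ge t_*/2$ for $|x|$ large. At the same time, on $\{x_n\le b+t_*\}$ the barrier $w_{t_*}$ vanishes. A small decrease of $t$ therefore preserves the ordering outside a large ball $B_{R}$, with $R$ depending on $t_*$, which is uniform as $t\downarrow t_*$. Consequently, if the infimum $t_*$ is not attained with strict separation, there must exist a contact point $x_0\in \overline{B_R}$ with $u(x_0)=w_{t_*}(x_0)$ and $x_0\in\overline{\{w_{t_*}>0\}}$.

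\emph{Touching at a finite point.} If $x_0$ lies in $\{x_n>b+t_*\}$, then $u>0$ there, and $u-w_{t_*}\ge0$ is harmonic near $x_0$ and vanishes at $x_0$. The strong maximum principle on the connected component then gives $u\equiv w_{t_*}$ on an open set, hence by unique continuation on the whole component, contradicting the asymptotics (the shift $t_*>0$ does not match $b$). If instead $x_0\in\{x_n=b+t_*\}$, then $u(x_0)=0$ and $x_0\in\FB(u)$, since $u\ge w_{t_*}>0$ just above $x_0$. Both free boundaries pass through $x_0$ with $\{w_{t_*}>0\}\subset\{u>0\}$, and both gradients have modulus $1$. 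Hopf's lemma applied to $u-w_{t_*}\ge 0$ in $\{w_{t_*}>0\}$ near $x_0$ gives $\partial_{e_n}(u-w_{t_*})(x_0)>0$ unless $u\equiv w_{t_*}$. But the tangency of the domains forces $\nabla u(x_0)=e_n=\nabla w_{t_*}(x_0)$, so this derivative vanishes, a contradiction. This is the standard comparison principle for classical Bernoulli solutions, where a one-sided touching of free boundaries with equal gradient moduli forces the functions to coincide. We conclude $t_*=0$, which gives the claim.

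The main obstacle is the compactness step: we must make sure the contact cannot escape to infinity along the free boundary near $\{x_n\approx b\}$. The region where $u_+$ is small is $\{x_n\approx g^{(+)}(x')\}$, with $g^{(+)}\to b$, so it is asymptotically at height $b<b+t_*$. There the barrier $w_{t_*}$ vanishes identically, and away from it the gap $t_*/2$ obtained from the expansion handles the rest. Making this uniform is where the precise decay $O(|x|^{2-n})$ in Corollary~\ref{cor:errplus0} is used.
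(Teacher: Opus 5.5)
Your proposal is correct and follows essentially the paper's route. You slide the half-plane barrier down from large height (your $(x_n-b-t)_+$ is the paper's $x_n-h$, equivalent since $u\ge 0$). You use the expansion of Corollary~\ref{cor:errplus0} to keep a uniform gap outside a large ball. You exclude a contact point at finite distance by the strong maximum principle in the interior, and by Hopf's lemma against $|\nabla u|=1$ on $\{x_n=b+t_*\}$.

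The only differences are cosmetic: for an interior contact the paper contradicts the gap on $\partial B_R$ rather than the asymptotics, and where you write $t\downarrow t_*$ you mean $t\uparrow t_*$.
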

\begin{proof}
It is enough to show that $u \ge x_n-b$ in $\R^n$; indeed, arguing symmetrically, $u \ge -x_n-b$ as well. Let
\[
h_\ast := \inf\{h > 0: u \ge x_n - h \quad\text{in}\quad \R^n\}.
\]
By Corollary~\ref{cor:errplus0} we have
\begin{equation}
    \label{eq:expansion_u_zeroth}
    u \geq x_n - b- |x|^{2-n}\quad\text{in}\quad \R^n\setminus B_1, 
\end{equation}
thus $h_\ast<\infty$. Moreover, it is a minimum by continuity, thus it suffices to show that $h_\ast\leq b$. Suppose that $h_\ast > b$ instead; then
\[
 u \ge x_n - b- |x|^{2-n} > x_n - h_\ast+\eps\quad\text{in}\quad \R^n\setminus B_R
\]
for $\eps = (h_\ast-b)/2$ and some $R > h_\ast$ large enough (so that $h_\ast > b + 2R^{2-n}$). That is, we have two ordered harmonic functions $u\ge v:= x_n - h_\ast$ in $\{x_n > h_\ast\}\cap B_R$, with $u > v+\eps$ on $\partial B_R$. By definition of $h_\ast$ (and the fact that $u$ is 1-Lipschitz in $\R^n$) and harmonicity, we must then have $u = v = 0$ at some point on $\{x_n = h_\ast\}\cap B_R$. But this is a contradiction with Hopf's lemma, by the Bernoulli condition on $u$ and the fact that $\partial_n v = 1$.
\end{proof}

The coefficients further satisfy:
\begin{lemma}\label{lem:precisectt}
     In the setting of Corollary~\ref{cor:errplus0}, we have $b, c_\pm,  \pm (d_\pm)_n>0$.

     In particular, up to a rescaling we can additionally assume that $  g^{(\pm)}>0$ in Corollary~\ref{cor:errplus0}, thus $\Omega_\pm\subset\{\pm x_n>0\}$.
\end{lemma}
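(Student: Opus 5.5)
Since Corollary~\ref{cor:errplus0} already gives $(d_\pm)_n=\pm(n-2)bc_\pm$, it suffices to prove $c_\pm>0$ and $b>0$. The plan is to get $c_\pm>0$ from positivity of $u-(|x_n|-b)$ (Lemma~\ref{lem:barrier}), and then $b>0$ from the disjointness $\Omega_+\cap\Omega_-=\emptyset$. I argue for $u_+$; $u_-$ is symmetric.

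\textbf{Step 1 (positivity of $w$).} Set $w:=u_+-(x_n-b)$ in $\Omega_+$. Then $w$ is harmonic and, by Lemma~\ref{lem:barrier}, $w\ge 0$. On $\FB(u_+)$ we have $w=b-x_n$. By Corollary~\ref{cor:errplus0},
\[
w=\frac{c_+}{|x|^{n-2}}+\frac{d_+\cdot x}{|x|^n}+O(|x|^{1-n-\alpha}).
\]
If $w$ vanished at an interior point, the strong maximum principle would give $w\equiv0$ on the (connected, for large $|x|$) component. Then $u_+=x_n-b$ there, so the free boundary would be $\{x_n=b\}$, forcing $c_+=0=d_+$. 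I would rule this out by the same argument as in Step 2 below, or by the non-one-dimensionality combined with unique continuation. So $w>0$ in $\Omega_+$.

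\textbf{Step 2 ($c_+>0$).} On the free boundary, $x_n=g^{(+)}(x')$ and $u_+=0$, so the expansion gives
\[
b-g^{(+)}(x')=w=c_+|x|^{2-n}+O(|x|^{1-n}).
\]
Since $w\ge0$ there, this forces $c_+\ge0$. Suppose $c_+=0$. Then $(d_+)_n=0$, and on large spheres $w=d_+'\cdot x'|x|^{-n}+O(|x|^{1-n-\alpha})$. Positivity of $w$ near $\{x_n\approx 0\}$, where this leading term changes sign, forces $d_+'=0$. Hence $w=O(|x|^{1-n-\alpha})$.

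On the other hand, $w>0$ is harmonic in $\{x_n>1\}\subset\Omega_+$ and bounded below by a positive constant on a fixed patch of $\{x_n=2\}$. Comparing with the Poisson integral of that patch in the half space $\{x_n>1\}$ gives
\[
w\ge \varepsilon\,(x_n-1)\,|x|^{-n}\quad\text{for large }|x|,
\]
which is of order $|x|^{1-n}$ along the ray $x=te_n$. This contradicts $w=O(|x|^{1-n-\alpha})$. Therefore $c_+>0$, and likewise $c_->0$.

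\textbf{Step 3 ($b>0$, then the signs of $d$).} The expansions (together with the gradient bounds) locate the two free boundaries:
\[
g^{(+)}(x')=b-c_+|x'|^{2-n}+o(|x'|^{2-n}),\qquad -g^{(-)}(x')=-b+c_-|x'|^{2-n}+o(|x'|^{2-n}).
\]
Disjointness of $\Omega_+$ and $\Omega_-$ requires $g^{(+)}>-g^{(-)}$. If $b=0$ this would read
\[
-c_+|x'|^{2-n}\ge c_-|x'|^{2-n}+o(|x'|^{2-n}),
\]
which is impossible since $c_\pm>0$. Combined with $b\ge0$ from Corollary~\ref{cor:errplus0}, this gives $b>0$, and then $\pm(d_\pm)_n=(n-2)bc_\pm>0$.

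\textbf{Step 4 (the final claim).} Since $g^{(+)}\to b>0$ and $-g^{(-)}\to-b<0$ at infinity, after enlarging the excluded cylinder and rescaling we get $g^{(\pm)}>0$, i.e.\ $\Omega_\pm\subset\{\pm x_n>0\}$.

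The main obstacle is Step 2, specifically ruling out $c_+=0$. It needs a careful Poisson-kernel lower barrier for a positive harmonic function in a perturbed half space, and the excluded cylinder near the origin must be handled. If the barrier argument turns out delicate, I would fall back on a flux identity: integrate $\partial_\nu w$ over $\partial B_R\cap\Omega_+$ and over the free boundary, using the Hopf sign $\partial_\nu w<0$ there.
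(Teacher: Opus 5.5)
Your argument for $c_\pm>0$ follows the paper's. Lemma~\ref{lem:barrier} gives $w=u-x_n+b\ge 0$, hence $c_+\ge 0$. If $c_+=0$, then $(d_+)_n=0$, and testing at $x'=-\lambda d_+'$ at bounded height kills $d_+'$. Finally, $w=O(|x|^{1-n-\alpha})$ contradicts the $|x|^{1-n}$ lower bound coming from the half-space Poisson kernel. Your comparison with the Poisson integral of a patch is a valid way to justify that last step, but carry it out in $\{x_n>2\}$, whose boundary contains the patch.

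The genuine difference is how you get $b>0$. The paper does it first and in one line: if $b=0$, Lemma~\ref{lem:barrier} gives $u\ge |x_n|$. Then $\{u=0\}\subset\{x_n=0\}$ has empty interior, which is impossible for a classical solution with nonempty free boundary. You instead combine disjointness of $\Omega_\pm$ with the free boundary expansions $g^{(+)}=b-c_+|x'|^{2-n}+O(|x'|^{1-n})$ and $-g^{(-)}=-b+c_-|x'|^{2-n}+O(|x'|^{1-n})$. That is correct, since it only needs $c_++c_->0$, and it shows nicely that the two sheets approach $\{x_n=\pm b\}$ from inside the strip. But it makes $b>0$ depend on the whole $c_\pm$ argument, whereas the paper's route is independent of it and shorter.

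The soft spot is the degenerate case $w\equiv 0$ in your Step 1. It is a subcase of $c_+=0$, so it must be excluded. Your first suggestion, repeating Step 2, cannot work: the Poisson comparison needs a positive lower bound for $w$ on a patch, which is exactly what fails here.

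Your second suggestion is the right one, but unique continuation alone only shows that the component of $\{u>0\}$ containing $\Omega_+$ is exactly the half-space $\{x_n>b\}$. To contradict non-one-dimensionality you must also show the remaining part is flat. That part is $u\mathbf{1}_{\{x_n<b\}}$, which is an entire classical solution, because $\{x_n=b\}$ is a free boundary component with $u=0$ just below it. It coincides with $u_-$ outside a compact set, and you need it to be a half-plane solution. One way is Weiss monotonicity centered at one of its free boundary points. Both its blow-up there and its blow-down are half-planes with energy $|B_1|/2$, so $W$ is constant and the solution is homogeneous, hence a half-plane; then $u$ is one-dimensional. The paper closes this case in the same spirit, with a flatness argument for the remaining entire solution.
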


\begin{proof}
The fact that $b > 0$ follows from Lemma~\ref{lem:barrier}: otherwise $u \ge |x_n|$, which would contradict the assumption that $u$ be a classical solution with unbounded free boundary. Together with the asymptotics for $g^{(\pm)}$ in Corollary~\ref{cor:errplus0}, it follows that $  g^{(\pm)}(x')>0$ in $\R^{n-1}\setminus B_M$ for some $M>0$, thus in $\R^{n-1}\setminus B_1$ up to a rescaling.

It suffices then to show that $c_+,(d_+)_n>0$, since the corresponding properties for $c_-,(d_-)_n$ follow then by symmetry.

First, from Corollary~\ref{cor:errplus0} and $u \ge x_n - b$ we have 
\[
-c_+ |x|^{2-n}\le (u - x_n + b) -c_+ |x|^{2-n}\le C |x|^{1-n}\quad\text{in}\quad \{x_n> b\}.
\]
Letting $|x|\to\infty$, this shows that $-c_+\leq 0$, i.e. $c_+ \geq 0$.

If $c_+>0$, since $(n-2) b c_+  = (d_+)_n$ we see that $(d_+)_n>0$ as well, concluding the proof. Otherwise, we would have $c_+=(d_+)_n=0$; let us show that this leads to a contradiction. First, by Corollary~\ref{cor:errplus0} we find 
\[
 0< w \le |x|^{1-n}\left( C|x|^{-\alpha}+\frac{d'_+\cdot x'}{|x|}\right)\quad\text{in}\quad \{x_n  > b\},
\]
where $w := u - x_n + b  > 0$ is harmonic in $\{x_n > b\}$. Considering $x=(-\lambda d_+',2b)$ and sending $\lambda\to\infty$ shows that $d_+' = 0$, thus
\[
0 \leq w \le C|x|^{1-n-\alpha}\quad\text{in}\quad \{x_n > b\}.
\]
If $w>0$, we obtain a contradiction with the maximum decay of order $|x|^{1-n}$---given by the Poisson kernel of the half-space---for positive harmonic functions. Otherwise $w(x)=0$ at some point, thus $u\equiv x_n-b$ in $\{x_n\geq b\}$  by the (strong) maximum principle. 
Then, $\bar u = u \, \mathbf{1}_{\{x_n > -b\}}$ is a classical entire solution to the Bernoulli problem. By improvement of flatness, the component of $\{\bar u > 0\}$ containing $\{u_+ > 0\}$ must be flat, and there cannot be  other components, as they would be bounded. 
\end{proof}

\section{Symmetry for entire solutions with finite index}\label{sec:symindex}

We now use the asymptotic information obtained above to prove the symmetry of finite-index entire solutions. 
The proof separates the compact and non-compact free boundary cases.

\subsection{Compact free boundary case} The compact case reduces to the classical exterior overdetermined problem:
\begin{proposition}\label{prop:compactclass}
    Let $u$ be a classical solution to the Bernoulli problem in $\R^n$, $n\geq 3$, with compact free boundary. Then, either $u$ is constant, or after a translation and rescaling we have $u=\frac{1}{n-2}(1-|x|^{2-n})_+$.
\end{proposition}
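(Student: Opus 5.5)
Since $\FB(u)$ is compact, fix $R$ with $\FB(u)\subset B_R$. The connected set $\R^n\setminus \overline{B_R}$ does not meet $\FB(u)$, so it lies entirely in $\{u>0\}$ or entirely in the interior of $\{u=0\}$. In the second case $\{u>0\}$ is bounded; $u$ is harmonic there and vanishes on the boundary, so $u\equiv 0$ by the maximum principle, and $u$ is constant. In the first case $u$ is a positive harmonic function in an exterior domain. By Lemma~\ref{lem:grad1bound}, $|\nabla u|\le 1$. Split into two subcases. If $\FB(u)=\varnothing$, then $u>0$ is entire harmonic and nonnegative, hence constant by Liouville. Otherwise $\Omega:=\{u>0\}$ has nonempty compact boundary, and $K:=\R^n\setminus\Omega$ is a compact set with smooth boundary. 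We want to show $K$ is a ball and $u$ is the radial profile.

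The next step is the asymptotics at infinity. The function $u$ is harmonic in $\R^n\setminus \overline{B_R}$ with at most linear growth. Expand it via Kelvin transform / spherical harmonics as $u=\ell(x)+a+c|x|^{2-n}+O(|x|^{1-n})$, where $\ell$ is linear. We rule out $\ell\neq 0$. Since $u\ge0$, a nonzero linear part would force $u<0$ somewhere far away in the direction $-\nabla\ell$. Hence $u\to a>0$ at infinity. Then $\{u>0\}$ is an exterior domain with $u=0$, $|\nabla u|=1$ on $\partial\Omega$, and $u\to a$ at infinity.

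To reach the classical exterior overdetermined problem, set $w:=a-u$. This $w$ is harmonic in $\Omega$, equals $a$ on $\partial K$, tends to $0$ at infinity, and has $\partial_\nu w$ constant on $\partial K$. A Serrin-type theorem for exterior domains, by Reichel or via moving planes à la Serrin \cite{Serrin71}, then gives that $K$ is a ball and $u$ is radial.

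If we want to avoid quoting that theorem, we run the moving planes directly. For a direction $e$ and $\lambda$ large, reflect across $\{x\cdot e=\lambda\}$. Compare $u$ with its reflection $u_\lambda$ in the reflected cap of $\Omega$. Start at large $\lambda$ using the expansion $u=a+c|x|^{2-n}+O(|x|^{1-n})$, where $c<0$ because $w>0$ decays like a positive multiple of $|x|^{2-n}$ by the maximum principle. Then decrease $\lambda$ until the first touching. Hopf's lemma and Serrin's corner lemma, applied at the free boundary with the Bernoulli condition, show that $K$ is symmetric with respect to some hyperplane orthogonal to every $e$. Hence $K$ is a ball $B_\rho(x_0)$, and $u$ is radial.

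Finally, the radial harmonic function in $|x-x_0|>\rho$ vanishing at $\rho$ is $u=A(1-(\rho/|x-x_0|)^{n-2})$. The condition $|\nabla u|=1$ on $\partial B_\rho$ gives $A(n-2)/\rho=1$. Translating and rescaling by $u_\rho$ yields $\frac1{n-2}(1-|x|^{2-n})_+$. The main obstacle is the moving plane start at infinity. The decay must be sharp enough that the reflection comparison holds for large $\lambda$. This is exactly where $c\ne0$ and the next-order term in the expansion are used, following Reichel's argument.
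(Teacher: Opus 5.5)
Your proposal is correct and follows essentially the same route as the paper: bound $u$ linearly via $|\nabla u|\le 1$, expand it at infinity, rule out the linear part by nonnegativity, apply Reichel's exterior Serrin-type theorem to $a-u$, and integrate the radial ODE. Your moving-plane sketch is just the content of that theorem's proof. Two small points, $a>0$ (otherwise $u\equiv 0$ in $\Omega$ by the maximum principle) and $\nabla w\to 0$ at infinity (which Reichel's hypotheses require), follow immediately from your expansion and should be stated.
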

\begin{proof}
    We can assume that $\FB(u)$ is nonempty: otherwise $u$ is a nonnegative harmonic function in $\R^n$ and thus constant. Since $\FB(u)$ is compact, then $\{u>0\}$ consists of a (single) unbounded component. Let $R_0$ be such that $\FB(u)\subset B_{R_0}$, so that $u$ is a positive harmonic function in $\R^n\setminus B_{R_0}$, satisfying $|u|\leq C(1+|x|)$ since $|\nabla u|\leq 1$. Separation of variables\footnote{Alternatively one can use a Green's function representation, or a (quite simpler) version of the arguments in Section~\ref{sec:improvflat}.} gives then $a,d\in\R^n$ and $b,c\in\R$ such that
    $$u=a\cdot x+b+ \frac{c}{|x|^{n-2}}  + \frac{d\cdot x}{|x|^{n}} + O(|x|^{-n})\qquad\mbox{in}\quad \R^n\setminus B_{R_0}\,.$$
    Since $u\geq 0$, necessarily $a=0$. Combined with standard harmonic estimates, $v:=b-u$ satisfies $|v|+|\nabla v|\to 0$ as $|x|\to\infty$, and $v\geq 0$ by the maximum principle.
    
    Then, the main result in \cite{reichel1997radial}---whose proof is based on the moving planes method---shows that $v$ (hence, $u$) is radially symmetric, and $\{v=b\}$ (hence, $\{u = 0\}$) is a ball.
    
    Let $R$ be the radius of this ball; after a rescaling and translation, we can assume that $R=1$ and it is centered at the origin. Integrating radially the equation $\Delta u(r)=0$---which is now an ODE, since $u=u(r)$ is radial---for $r\geq 1$ and imposing the initial conditions $u(1)=0$, $u'(1)=1$ yields the desired form for $u$.
\end{proof} 

\subsection{Non-compact free boundary case: symmetry in $x_n$}\label{sec:symxn}
Let us now assume that $3\leq n < n_*$, and that $u:\R^n\to[0,\infty)$ is a classical solution to the Bernoulli problem with finite Morse index, non-compact free boundary, and which is not one-dimensional.

We can (and do) assume then that we are in the setting given by Proposition~\ref{prop:FBdichotomy} and Corollary~\ref{cor:errplus0} (recall also Lemmas~\ref{lem:barrier} and \ref{lem:precisectt}). In particular
\begin{equation}\label{eq:56290tyq}
    \Omega_\pm:=\{u_\pm>0\}\subset D:=\R^n\setminus (B_1'\times[-1,1]),
\end{equation}
and the $\Omega_\pm$ are given by 
\begin{equation}\label{eq:758915gsA}
\Omega_+ = \{|x'|> 1,\,     x_n > g^{(+)}(x')\} \cup \{|x'|\le 1,\,     x_n > 1\} ,
\end{equation}
\begin{equation}\label{eq:758915gsB}
\Omega_- = \{|x'|> 1,\,    x_n < -g^{(-)}(x')\} \cup \{|x'|\le 1,\,    x_n <- 1\} ,
\end{equation}
with (by Lemma~\ref{lem:precisectt})
\begin{equation}\label{eq:758915gsC}
    g^{(+)}>0 \quad \mbox{and} \quad g^{(-)}>0 .
\end{equation}
 
\begin{remark}
    In fact, the simpler asymptotics in Corollary~\ref{cor1} would suffice in this section.
\end{remark}

This section will show:
\begin{proposition}\label{prop:symxn}
    In the setting of Corollary~\ref{cor:errplus0}, $u$ is even in $x_n$. Moreover, $\partial_{x_n} u> 0$ in $\{x_n> 0\}\cap \overline{\{u>0\}}$.
\end{proposition}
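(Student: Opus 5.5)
We run the moving planes method in the direction $e_n$, à la Serrin/Schoen, comparing $u$ with its reflection. For $\lambda\in\R$ set $T_\lambda=\{x_n=\lambda\}$, $x^\lambda=(x',2\lambda-x_n)$, $u^\lambda(x)=u(x^\lambda)$, and consider
\[
w_\lambda:=u-u^\lambda\quad\text{in}\quad \Sigma_\lambda:=\{x_n>\lambda\}.
\]
Let $\Lambda:=\{\lambda>0: w_\mu\ge 0 \text{ in } \Sigma_\mu \text{ for all }\mu\ge\lambda\}$. The goal is to show $\Lambda\neq\varnothing$, and then that $\lambda_\ast:=\inf\Lambda=0$. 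Running the same argument for $u(x',-x_n)$ gives $u\le u^0$ as well, hence evenness. Strict monotonicity comes out of the argument: for every $\lambda>0$ we will have $w_\lambda>0$ in $\Sigma_\lambda\cap\overline{\{u>0\}}$ off $T_\lambda$, so Hopf's lemma on $T_\lambda$ gives $\partial_n u=\tfrac12\partial_n w_\lambda>0$ there.

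\textbf{Step 1: start for $\lambda$ large.} For $\lambda\gg1$, every $x\in\Sigma_\lambda$ lies in $\Omega_+$ with $x_n>1$. The reflected point $x^\lambda$ either lies in $\Omega_+$, or in $\{u=0\}$, or in $\Omega_-$. In the last two cases Lemma~\ref{lem:barrier} gives $u(x)\ge x_n-b$, while $u(x^\lambda)$ is either $0$ or at most $-(2\lambda-x_n)-b+O(|x|^{2-n})$, which is much smaller. When both points lie in $\Omega_+$ and are far away, we use the expansion of Corollary~\ref{cor:errplus0}:
\[
w_\lambda = 2(x_n-\lambda)+c_+\big(|x|^{2-n}-|x^\lambda|^{2-n}\big)+\dots
\]
Here $|x|>|x^\lambda|$, so the $c_+$-term is negative but of size $O(|x|^{1-n})(x_n-\lambda)$, and it is dominated by $2(x_n-\lambda)$. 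The gradient bound is what makes this comparison work near $T_\lambda$. On bounded regions near $T_\lambda$ we use instead $\partial_n u\ge 1/2$ for $x_n$ large, which also follows from the gradient expansion. Together these give $w_\lambda\ge0$.

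\textbf{Step 2: continuation.} Suppose $\lambda_\ast>0$. Then $w_{\lambda_\ast}\ge0$, and since $u$ is not symmetric about $T_{\lambda_\ast}$ (its zero set is nonempty below), the strong maximum principle gives $w_{\lambda_\ast}>0$ in $\Sigma_{\lambda_\ast}\cap\{u>0\}$. Moreover $\{u>0\}^{\lambda}\cap\Sigma_\lambda\subset\{u>0\}$. The free boundary reflected into $\Sigma_{\lambda_\ast}$ cannot touch $\FB(u)$ at an interior point: at such a point we would have $|\nabla u|=|\nabla u^{\lambda_\ast}|=1$ with the sets ordered, which contradicts Hopf/Serrin. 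The same holds at a corner point on $T_{\lambda_\ast}$, where the free boundary is orthogonal to $T_{\lambda_\ast}$; there we use Serrin's corner lemma.

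We then perturb to $\lambda_\ast-\eps$. On compact sets this works by the strict inequalities above. At infinity we need a \emph{uniform} positivity. Far out, $\{u>0\}\cap\Sigma_\lambda$ consists of $\Omega_+$ above the graph $g^{(+)}$, whose reflection lies in $\{x_n<2\lambda-g^{(+)}\}$. Since $g^{(+)}\to b$ and $g^{(+)}<b$ with $\lambda_\ast>0$, the reflected graph portion stays at definite distance from $\FB(u)$ when $2\lambda-b<b$, but not otherwise. This is exactly where the expansion enters. In the region where $x$ and $x^\lambda$ both lie in $\Omega_+$, the same computation as in Step 1 gives
\[
w_\lambda\ge 2(x_n-\lambda)\big(1-C|x|^{1-n}\big)>0.
\]
When $x^\lambda\in\{u=0\}\cup\Omega_-$, positivity follows from $u(x)>0$ and, near the graph, from the $C^1$-closeness of $g^{(+)}$ to the constant $b$: its slope is $O(|x'|^{1-n})$ while the reflection gap is of order $\lambda_\ast$.

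\textbf{Main obstacle.} The hardest part is the uniformity at infinity in Step 2. There, both $w_\lambda$ and the geometric separation degenerate as $|x'|\to\infty$ (all sheets become horizontal), so compactness is lost. I would handle it by a contradiction sequence $x_k\to\infty$ with $w_{\lambda_k}(x_k)<0$, normalized by the dominant term $x_n-\lambda$ and the decay $|x|^{2-n}$, and then use the refined expansion with the positive coefficients $c_\pm$ of Lemma~\ref{lem:precisectt} to reach a sign contradiction.
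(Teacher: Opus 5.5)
\textbf{Gap: uniform nonnegativity of $w_\lambda$ at infinity.} Your architecture is the paper's: moving planes in $e_n$, initialization for large $\lambda$ from the two-sheet asymptotics and Lemma~\ref{lem:barrier}, then Hopf and Serrin's corner lemma (Lemma~\ref{lem:serrin}) on $T_{\lambda_\ast}$, and compactness on bounded sets. The gap is the step you flag yourself as the main obstacle, and as written it is not proved.
\begin{itemize}
\item For $x^\lambda\in\Omega_-$ you say positivity ``follows from $u(x)>0$''. But $u(x^\lambda)>0$ there too, so you need a quantitative comparison of the two sheets. Closeness of $g^{(+)}$ to $b$ says nothing about the values of $u$.
\item Your separation claim is inaccurate. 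Far out, the reflected free boundaries lie at distance about $2\lambda$ (reflection of $\FB(u_-)$) and about $2|\lambda-b|$ (reflection of $\FB(u_+)$) from $\FB(u)$. So separation degenerates only at $\lambda=b$, and there it is harmless.
\item The blow-down contradiction using $c_\pm>0$ is only announced, not carried out.
\end{itemize}

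\textbf{The fix.} In fact there is no obstacle. The paper proves a direct, non-perturbative statement (Lemma~\ref{lem:refln}(b)): for each $\delta>0$ there is $R_\delta$ with $w_\lambda\ge 0$ in $\Sigma_\lambda\setminus B_{R_\delta}$ for \emph{every} $\lambda\ge\delta$. Your own Step 1 case analysis gives this if you run it with $\delta\le\lambda\le t_0$ instead of $\lambda\gg1$ (larger $\lambda$ are covered by Step 1). For $|x|\ge R$ you have $|x^\lambda|=|x-2\lambda e_n|\ge |x|-2t_0$, which is large, so $x^\lambda$ is not in the core $B_1'\times[-1,1]$. There are three cases.
\begin{itemize}
\item If $x^\lambda\in\{u=0\}$, there is nothing to prove.
\item If $x^\lambda\in\Omega_+$, the vertical segment from $x^\lambda$ up to $x$ lies in $\Omega_+\setminus B_M$. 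This uses that $\Omega_+$ is a supergraph and $x^\lambda_n>0$ by Lemma~\ref{lem:precisectt}. Lemma~\ref{lem:gradn} then gives $w_\lambda(x)\ge \tfrac32(x_n-\lambda)$.
\item If $x^\lambda\in\Omega_-$, Lemma~\ref{lem:barrier} and the expansion of $u_-$ in Corollary~\ref{cor:errplus0} give $w_\lambda(x)\ge (x_n-b)-\bigl(x_n-2\lambda-b+C|x^\lambda|^{2-n}\bigr)\ge 2\delta-C|x^\lambda|^{2-n}>0$.
\end{itemize}
The last bound also rules out a far-away zero of $u$ in $\Sigma_\lambda$ reflecting into $\Omega_-$, which the paper handles by a separate sliding argument.

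Since these bounds are uniform in $\lambda\ge\delta$, taking $\delta=\lambda_\ast/2$ reduces the continuation to a compact set. There your Hopf/corner-lemma argument works. The sign of $c_\pm$ plays no role in this proposition; the paper notes that the cruder asymptotics of Corollary~\ref{cor1} suffice here. $c>0$ is needed only for the reflections in $x_1$ in Section~\ref{sec:symx'}, where the leading term $x_n$ gives no monotonicity.
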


We start with some asymptotic properties.
\begin{lemma}[Gradient asymptotics]\label{lem:gradn}
There is $M>0$ such that
$$\partial_{x_n} u_+>3/4\quad\mbox{in}\quad \Omega_+\cap \{|x|\geq M \}\qquad\mbox{and}\qquad \partial_{x_n} u_-<-3/4\quad\mbox{in}\quad \Omega_-\cap \{|x|\geq M \}.$$
\end{lemma}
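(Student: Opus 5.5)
The plan is to read this off directly from the gradient expansion in Corollary~\ref{cor:errplus0}. By symmetry it suffices to treat $u_+$. That corollary gives
\[
|\nabla (u_+ - v_+)| \le |x|^{-n-\alpha}\qquad\text{in } \Omega_+.
\]
Here $v_+ = x_n - b + c_+|x|^{2-n} + d_+\cdot x\,|x|^{-n}$, and $|c_+|+|d_+|\le 1/2$.

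First I compute $\partial_{x_n} v_+ = 1 + (2-n)c_+ x_n|x|^{-n} + (d_+)_n|x|^{-n} - n(d_+\cdot x)x_n|x|^{-n-2}$. Each correction term is bounded by $C(n)|x|^{1-n}$, so
\[
\partial_{x_n} v_+ \ge 1 - C(n)|x|^{1-n}.
\]
Combining this with the error bound gives
\[
\partial_{x_n} u_+ \ge 1 - C(n)|x|^{1-n} - |x|^{-n-\alpha}\qquad\text{in } \Omega_+\cap\{|x|\ge 1\}.
\]
Since $n\ge 3$, both error terms tend to zero. Choosing $M$ large enough, depending only on $n$, makes the right-hand side exceed $3/4$ for $|x|\ge M$.

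For $u_-$ the same computation applies with $v_- = -x_n - b + \dots$, whose $x_n$-derivative is $-1 + O(|x|^{1-n})$. This yields $\partial_{x_n} u_- < -3/4$ on $\Omega_-\cap\{|x|\ge M\}$. Taking the larger of the two values of $M$ finishes the argument.

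The only point needing care is that the gradient estimate in Corollary~\ref{cor:errplus0} holds on all of $\Omega_\pm$ up to the free boundary, and not only in the interior. This is guaranteed by the statement of the corollary, which in turn comes from \eqref{expansion222_grad} in Theorem~\ref{thm:highordexp}. So I do not expect any real obstacle here.
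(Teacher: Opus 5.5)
Your proposal is correct and is essentially the paper's argument. The paper's one-line proof likewise reads the bound off the gradient comparison in Corollary~\ref{cor:errplus0}, citing also the description \eqref{eq:758915gsA}--\eqref{eq:758915gsC} of $\Omega_\pm$. Your explicit computation $\partial_{x_n} v_\pm = \pm 1 + O(|x|^{1-n})$ supplies exactly the details that proof leaves out.
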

\begin{proof}
The result follows from \eqref{eq:758915gsA}--\eqref{eq:758915gsC} and the gradient comparison in Corollary~\ref{cor:errplus0}.
\end{proof}

In the following, we define for $x= (x', x_n)$ the symmetry:
\[
\sigma_t (x) := (x' , 2t-x_n)\quad\mbox{and}\quad  x^*_t := \sigma_t (x),
\]
\[
u^*_t = u \circ \sigma_t \qquad \mbox{i.e.,}\quad  u_t^*(x', x_n) := u(x', 2t-x_n). 
\]

\begin{lemma}[Vertical initialization and reflection asymptotics] \label{lem:refln}
In the setting of Corollary~\ref{cor:errplus0}, we have:

\noindent (a) There exists $t_0>8$ (large) such that for all $t>t_0$ we have:
$$
u\geq u_t^*\qquad\mbox{in}\quad\{x_n\geq t\}
$$

\noindent (b) Given $\delta >0$, there is $R_\delta \ge 0$  such that if $t\geq \delta$, 
$$
u\geq u_t^*\qquad\mbox{in}\quad\{x_n\geq t\}\setminus B_{R_\delta}.
$$
\end{lemma}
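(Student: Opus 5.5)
The proof compares $u(x)$ with $u(x^*_t)$ directly, splitting according to where the reflected point $x^*:=x^*_t$ lies. Throughout, $x\in\{x_n\ge t\}$, so $x^*_n=2t-x_n\le t$.

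I will use three inputs.
\begin{itemize}
\item The lower barrier $u\ge |x_n|-b$ from Lemma~\ref{lem:barrier}.
\item A crude upper bound $u\le |x_n|+C_0$ in $\R^n$. Outside a compact set this follows from Corollary~\ref{cor:errplus0}, since $u_\pm\le \pm x_n-b+C|x|^{2-n}$ there. On the compact set it follows from $|\nabla u|\le 1$ (Lemma~\ref{lem:grad1bound}).
\item The gradient asymptotics $\partial_{x_n}u_+>3/4$ in $\Omega_+\setminus B_M$ (Lemma~\ref{lem:gradn}), together with the graphical structure \eqref{eq:758915gsA}--\eqref{eq:758915gsC}.
\end{itemize}
The graphical structure gives that a vertical segment starting at a point of $\Omega_+$ and going upward stays in $\Omega_+$, and that $\Omega_-\subset\{x_n<0\}$.

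For part (a), there are three cases.
\begin{itemize}
\item If $u(x^*)=0$, the inequality is trivial.
\item If $x^*\in\Omega_-$, or $x^*$ lies in the bounded region $B_1'\times[-1,1]$, the crude bounds suffice:
\[
u(x)\ge x_n-b=2t-x^*_n-b\ge |x^*_n|+2t-b-2\ge u(x^*)
\]
as soon as $2t\ge b+C_0+2$.
\item If $x^*\in\Omega_+$, consider the vertical segment $[x^*,x]$, which lies in $\Omega_+$.
  \begin{itemize}
  \item If the segment avoids $B_M$, then $u(x)-u(x^*)=\int\partial_{x_n}u>0$.
  \item If the segment meets $B_M$, then $|x'|\le M$ and $x^*_n\le M$ (recall $x^*_n\le x_n$, and the segment meeting $B_M$ means some height in $[x^*_n,x_n]$ is below $M$). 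Hence $x^*\in B_{2M}$ and $u(x^*)\le C_M$, while $u(x)\ge t-b>C_M$ for $t$ large.
  \end{itemize}
\end{itemize}
This fixes $t_0$ in terms of $b$, $C_0$, $M$ and $C_M$.

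For part (b), fix $\delta>0$ and take $t\ge\delta$ and $|x|\ge R_\delta$. If $t>t_0$, part (a) applies, so assume $t\le t_0$. The bounded-region case and the case $u(x^*)=0$ are handled as in (a), so two cases remain.
\begin{itemize}
\item Case $x^*\in\Omega_+$. If the vertical segment meets $B_M$, then $|x'|\le M$ and $x^*_n\le M$. This forces $x_n\ge R_\delta/2$, hence $t\ge x_n/2\ge R_\delta/4>t_0$ for $R_\delta$ large, a contradiction. Otherwise the gradient argument applies.
\item Case $x^*\in\Omega_-$. Here I need the refined expansion:
\[
u(x)\ge x_n-b=2t+|x^*_n|-b,\qquad u(x^*)\le |x^*_n|-b+C|x^*|^{2-n},
\]
so it suffices that $2\delta\ge C|x^*|^{2-n}$, i.e.\ $|x^*|\ge \rho_\delta$. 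If instead $|x^*|<\rho_\delta$, then $|x'|<\rho_\delta$, so $x_n\ge R_\delta/2$ for $R_\delta\gg\rho_\delta$. Moreover $|x_n-2t|<\rho_\delta$, so $2t>x_n-\rho_\delta$ is large and again $t>t_0$, a contradiction.
\end{itemize}

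The main delicate point is this last case. Part (b) must hold for all $t\ge\delta$ with $R_\delta$ independent of $t$. The zeroth-order bound $u\le|x_n|+C_0$ is too weak when $2t<C_0$, so one needs the decay of the $|x|^{2-n}$ correction in Corollary~\ref{cor:errplus0} to beat the fixed gap $2\delta$. The bad points, where $|x^*|$ is small but $|x|$ is large, have to be routed back to part (a) through the observation that they force $t$ to be large.
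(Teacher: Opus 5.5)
Your proof is correct and follows essentially the same case analysis as the paper: you split according to whether the reflected point lies near the origin, in $\Omega_+$ (vertical monotonicity from Lemma~\ref{lem:gradn}), or in $\Omega_-$ (where the $|x|^{2-n}$ decay from Corollary~\ref{cor:errplus0} beats the fixed gap $2t\ge 2\delta$). The notable differences are these: you take the lower bound $u(x)\ge x_n-b$ directly from Lemma~\ref{lem:barrier}, which spares you the paper's sliding argument ruling out $u(x)=0$ in part (b); and you exclude the ball $B_{R_\delta}$ by showing that bad points force $t>t_0$, instead of the paper's cylinder $B'_{R_\delta}\times(0,3R_\delta)$ with $t<R_\delta$.
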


\begin{proof}
We first show (a).  Let $x\in \{x_n \ge t\}$ with $t\ge t_0>8$; by Corollary~\ref{cor:errplus0} we have $u(x)>0$. If $u_t^*(x) = 0$ there is nothing to prove, so we can also assume $u_t^*(x) > 0$, that is, $\sigma_t(x) \in \{u > 0\}$. We now have three options, where we denote by $M \ge 1$ the constant from Lemma~\ref{lem:gradn}:  

\begin{itemize}
\item If $\sigma_t(x) \in \{u > 0\}\cap {B_{M}} $, since $u$ is 1-Lipschitz (and    $\partial B_2\cap \{u=0\}\neq \varnothing$) we know $u_t^*(x) \le 4 M$. On the other hand, $u(x) \ge |x_n| -b \ge  t_0-b$, so that taking $t_0$ large we get $u(x) \ge u_t^*(x)$.  

\item If $\sigma_t(x) \in  \Omega_+\setminus {B_{M}}$, then $0<(\sigma_t(x))_n<x_n$, $|\sigma_t(x)|\geq M$, and (by \eqref{eq:758915gsA}--\eqref{eq:758915gsC})   $u>0$ on the segment between $\sigma_t(x)$ and $x$. Lemma~\ref{lem:gradn} gives that $u(x) \ge  u_t^*(x)$ as well.

\item Finally, if $\sigma_t(x) \in \Omega_-\setminus {B_{M}}$, since $x\in \Omega_+\setminus B_{M}$ Corollary~\ref{cor:errplus0} gives (up to maybe making $M$ universally larger) 
\[ u(x) = u_+(x) \ge x_n -b -1\qquad\text{and}\qquad 
u^*_t(x) =  u_-(\sigma_t(x)) \le  x_n-2t -b  +1.
\]
From $t_0>1$ (so that $-2t+1\leq -2t_0+1\leq -1$) we conclude that $u(x)\geq u_t^*(x)$ in this case as well.
\end{itemize}

\medskip
Let us now show (b). If $t>t_0$, where $t_0>8$ was found in (a), we already conclude. Hence, it suffices to show that given $\delta >0$, there exists $R_\delta \ge t_0$ such that, for $t\in (\delta, R_\delta)$, we have
$$
u\geq u_t^*\qquad\mbox{in}\quad\{x_n\geq t\}\setminus (B_{R_\delta}'\times(0, 3R_\delta)).
$$
Let $x\in \{x_n \ge t\}\setminus  (B_{R_\delta}'\times(0, 3R_\delta))$. If $u_t^*(x) = 0$ there is nothing to prove, so we can assume that $u_t^*(x) > 0$. From $R_\delta>1$ one sees that $\sigma_t(x) \in \R^n\setminus (B_1'\times [-1,1])$, thus $\sigma_t(x) \in \Omega_+\cup \Omega_-$. 

Let us assume first that $u(x)>0$ as well; we will reduce to this case immediately after. By \eqref{eq:758915gsA}--\eqref{eq:758915gsC} we have $x\in \Omega_+$.

If also $\sigma_t(x) \in \Omega_+$, Lemma \ref{lem:gradn} shows that $u(x) > u(\sigma_t(x)) = u_t^*(x)$, just as in case (a). If $\sigma_t(x) \in \Omega_-$ instead, by the asymptotics in Corollary~\ref{cor:errplus0} we can ensure, up to making $R_\delta$ large enough, that
\[ u(x) = u_+(x) \ge x_n -b -\delta/3\qquad\text{and}\qquad 
u^*_t(x) =  u_-(\sigma_t(x)) \le  x_n-2t -b  +\delta/3.
\]
Since $t\geq \delta$ this shows that $u(x)>u^*_t(x)$ also in this case.

It only remains to argue that $u(x)>0$ indeed above. If $\sigma_t(x)\in\Omega_+$, this is automatic by Lemma~\ref{lem:gradn}. Assume that $\sigma_t(x)\in\Omega_-$ and $u(x)=0$ instead; increasing $x_n$ until we reach $\FB(u)$, and using the case from before, we then find a contradiction. More precisely: let $s:=\max\{\lambda\in[x_n,\infty):u(x',\lambda)=0\}$ and $y=(x',s)\in\R^n$, so that $y\in {\rm graph}\, g^{(+)}\subset \FB(u)$ (recall \eqref{eq:758915gsA}--\eqref{eq:758915gsC}). Then $u(y+he_n)>0\ \forall\ h>0$; moreover, by Lemma~\ref{lem:gradn} applied with $u_-$ we have $u(\sigma_t(y+he_n))\geq u(\sigma_t(x))>0$ as well. We can therefore apply the previous case with $y+he_n$ in place of $x$, finding
$$u(y+he_n)\geq u(\sigma_t(y+he_n))\geq u(\sigma_t(x))>0\quad \forall \ h>0.$$
Sending $h\to 0$ then $u(y)\geq u(\sigma_t(x))>0$, a contradiction with $y\in\FB(u)$. 
\end{proof}

In the proof of Proposition~\ref{prop:symxn}, we will need the following result from \cite{Serrin71}, which we reproduce here for the reader's convenience:
\begin{lemma}[{\cite[Lemma 1]{Serrin71}}]
\label{lem:serrin}
Let $U^{*}$ be a domain with $C^{2}$ boundary and let $H$ be a plane
containing the normal to $\partial U^{*}$ at some point $x\in \partial U^*$.
Let $U$ then denote the portion of $U^{*}$ lying on some particular side of $H$.

Suppose that $v$ is of class $C^{2}$ in the closure of $U$ and satisfies
\[
\Delta v \leq 0 \qquad \text{in}\quad  U,
\]
while also $v \ge 0$ in $U$ and $v = 0$ at $x$.
Let $s$ be any direction at $x$ which enters $U$
non-tangentially. Then either
\[
\frac{\partial v}{\partial s}(x) > 0
\qquad \text{or} \qquad
\frac{\partial^{2} v}{\partial s^{2}}(x) > 0,
\]
unless $v \equiv 0$.
\end{lemma}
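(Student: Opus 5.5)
We would prove the lemma by a Hopf-type barrier built inside the corner region cut out of $U^*$ by $H$. We normalise coordinates so that $x=0$, the inner unit normal to $\partial U^*$ at $0$ is $e_n$, $H=\{y_1=0\}$, and $U$ lies on the side $\{y_1>0\}$. Since $\partial U^*$ is $C^2$, there is an interior tangent ball $B_r(c)\subset U^*$ with $c=re_n$ and $\partial B_r(c)\ni 0$. We assume $v\not\equiv 0$. Since $\Delta v\le 0$ and $v\ge 0$ in the (connected) domain $U$, the strong maximum principle gives $v>0$ in $U$.

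\textbf{Step 1: the barrier.} We work in $D:=B_r(c)\cap\{y_1>0\}\cap B_{r/2}(0)$. On $D$ we set
\[
w(y):=y_1\,\psi(y),\qquad \psi(y):=e^{-\alpha|y-c|^2}-e^{-\alpha r^2}.
\]
A direct computation gives
\[
\Delta w=y_1\Delta\psi+2\partial_1\psi=y_1e^{-\alpha|y-c|^2}\bigl(4\alpha^2|y-c|^2-2n\alpha-4\alpha\bigr).
\]
In $D$ we have $|y-c|\ge r/2$, so choosing $\alpha$ large makes $\Delta w>0$ in $D$. Moreover $w=0$ on $\partial D\cap\{y_1=0\}$ and on $\partial D\cap\partial B_r(c)$.

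\textbf{Step 2: comparison on $\partial D$.} The remaining part of $\partial D$ is $\Gamma:=\partial B_{r/2}(0)\cap\overline{B_r(c)}\cap\{y_1\ge 0\}$. We need $v\ge c_0\,y_1$ on $\Gamma$ for some $c_0>0$, which is the delicate point, since $v$ may vanish on $\Gamma\cap H$.
\begin{itemize}
\item At points of $\Gamma$ with $y_1>0$ that lie in the interior of $U^*$ we have $v>0$.
\item At points of $\Gamma\cap H$ in the interior of $U^*$ where $v=0$, the classical Hopf lemma applied on the flat face $H$ (interior ball condition from the $y_1>0$ side) gives $\partial_1 v>0$.
\item Points of $\Gamma$ touching $\partial B_r(c)$ only matter where $w$ also vanishes, and there we use $w\le Cy_1\operatorname{dist}(y,\partial B_r(c))$.
\end{itemize}
By compactness these give $v\ge \varepsilon w$ on $\Gamma$ for a small $\varepsilon>0$. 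Hence $v-\varepsilon w$ is superharmonic in $D$ and nonnegative on $\partial D$, so $v\ge\varepsilon w$ in $D$. We expect this boundary comparison near the edge $\Gamma\cap H$ to be the main obstacle. We would handle it by possibly shrinking the radius $r/2$ so that $\Gamma$ stays at positive distance from $\partial U^*$ away from $\partial B_r(c)$, and then using a uniform Hopf constant along the compact arc.

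\textbf{Step 3: conclusion.} Near $0$ we have $\psi(y)\ge c_1 y_n$ for $y\in D$, with $c_1>0$: indeed $\psi$ vanishes on $\partial B_r(c)$ with inward gradient $2\alpha r e^{-\alpha r^2}e_n$ at $0$. Let $s$ enter $U$ non-tangentially, so $s_1>0$ and $s_n>0$. For small $t>0$ we have $ts\in D$, and therefore
\[
v(ts)\ge \varepsilon c_1 s_1s_n\,t^2=:c_2t^2 .
\]
Since $v(0)=0$ and $v\ge0$, we get $\partial_s v(0)\ge 0$. If $\partial_s v(0)>0$ we are done. Otherwise Taylor's formula for the $C^2$ function $v$ gives $\tfrac12\partial_{ss}v(0)\,t^2+o(t^2)\ge c_2t^2$, hence $\partial^2_{s}v(0)\ge 2c_2>0$.
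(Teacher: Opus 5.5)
Your argument is correct and is essentially Serrin's original proof, which the paper only quotes without proof. The ingredients match: the barrier $w=y_1\bigl(e^{-\alpha|y-c|^2}-e^{-\alpha r^2}\bigr)$ on $B_r(c)\cap U\cap B_{r/2}(x)$, where $c\in H$ is what makes $\Delta w$ a positive multiple of $y_1$; Hopf's lemma on the flat face $H$ for the boundary comparison; and the maximum principle followed by a second-order Taylor expansion along $s$.

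Two details need tightening. First, choose the tangent ball so that $\overline{B_r(c)}\cap\partial U^*=\{x\}$, which means shrinking $r$ itself and not just the cap radius. Then $\Gamma$ is a compact subset of $U^*$, and your first two bullets plus a compactness and mean-value argument give $v\ge c_0y_1\ge c_0 w$ on $\Gamma$. This makes your third bullet unnecessary, and that bullet would in any case be circular at an edge point of $\partial U^*\cap H$. Second, $\psi\ge c_1y_n$ holds only in a non-tangential region such as $\{|y|^2\le r y_n\}$, not in all of $D$. That region is still all Step 3 uses, since it contains $ts$ for small $t>0$.
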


We can now proceed with:
\begin{proof}[Proof of Proposition~\ref{prop:symxn}]
Let $T\subset [0, \infty)$ be defined as 
\[
T := \{t \in [0, \infty) : u \ge u_t^*\ \  \text{in} \ \  \{x_n\ge t\}\quad \text{and}\quad \partial_{x_n} u > 0\ \  \text{in} \ \  \{x_n> t\}\cap\overline{\{u > 0\}}\, \}. 
\]
 We want to show that $0\in T$: This gives  $u\geq u_0^*$ and $\partial_{x_n} u > 0$ in $\{x_n > 0\}\cap \overline{\{u > 0\}}$, and arguing symmetrically we deduce  $u=u_0^*$, i.e., $u$ is even in $x_n$, as we wanted. 

We start by observing that, if $t\in T$ and $s>t$, then $s\in T$ as well. To show it, it is enough to verify that $u\geq u_s^*$ in $\{x_n\geq s\}$---we will repeatedly use that $u$ is monotone nondecreasing in the $x_n$ direction in $\{x_n > t\}$ (since there it is monotone when positive, and it is nonnegative):
\begin{itemize}
    \item If $s\leq x_n \leq 2s-t$, so that $t\leq 2s-x_n\leq s$, we have 
    $$u_s^*(x', x_n)=u(x',2s-x_n)\leq u(x',s)\leq u(x',x_n).$$
    \item If $x_n\geq 2s-t$, so that $x_n-2(s-t)\geq t$, we have
    $$u_s^*(x', x_n)=u(x',2s-x_n)=u_t^*(x',x_n-2(s-t))\leq u(x',x_n-2(s-t))\leq u(x',x_n),$$
    by monotonicity and the fact that $u\geq u_t^*$ in $\{x_n\geq t\}$.
\end{itemize}
Hence, $T$ is an interval (a half-line). Notice also that, by definition and continuity of $u$, $T$ is closed. Moreover, $T$ is nonempty by (a) in Lemma~\ref{lem:refln} and Lemma~\ref{lem:gradn}. We will show that $T$ is relatively open in $[0, \infty)$, thus concluding that $0\in T$.

Let $t>0$, with $t\in T$. By definition, $\partial_{x_n} u\geq 0$ in $\{x_n= t\}\cap \overline{\{u>0\}}$: indeed, the only nontrivial case to consider is if $x\in \FB(u)\cap \{x_n= t\}$. The fact that $u\geq u_t^*$ in $\{x_n \ge t\}$ applied around $x$ then forces $\partial_{x_n} u (x) = \nu_{\FB(u)}(x)\cdot e_n\geq 0$.

\medskip

\noindent {\bf Step 1.} We show that $\partial_{x_n} u>0$ also on $\{x_n= t\}\cap \overline{\{u>0\}}$.

Assume there exists some $x\in \{x_n= t\}\cap \overline{\{u>0\}}$ with $\partial_{x_n} u (x)=0$ instead. Consider $v:=u-u_t^*$, so that $v\geq 0$ in $\{x_n\geq t\}$, $v=0$ on $\{x_n=t\}$, $\partial_{x_n} v(x)=0$, and $v>0$ in $ \{u_t^*>0\}\cap \{x_n > t\}$ (i.e., where $v$ is harmonic, since $u\geq u_t^*$ in $\{x_n > t\}$). 
 There are two cases:
\begin{itemize}
\item If $u(x)>0$, Hopf's lemma (applied to $v$ in a small half-ball $D:=B_r(x)\cap \{x_n\geq t\}\subset \{u>0\}\cap \{u_t^*>0\}$)  gives  $\partial_{x_n} v(x)>0$, which contradicts the fact that $\partial_{x_n} v(x)=0$.

    \item If $u(x)=0$ instead, we have $x\in \FB(u) \cap \{x_n = t\}$. Observe that $\nabla u(x)=\nabla u_t^*(x)$ (i.e., $\nabla v(x)=0$) since $\partial_{x_n}u(x) = 0$.  
    This shows that $\FB(u)$ and $\FB(u_t^*)$ are tangent at $x$.   Put $U^*=\{u_t^*>0\}$ and $H=\{x_n = t\}$, so that $x\in \partial U^*\cap H$ and $\nu_{\partial U^*}(x)\cdot e_n = 0$. The same computation\footnote{This follows by symmetry and because, denoting $\nu(x) = e_1$, the constant Neumann condition and harmonicity of $u$ impose $\partial^2_{x_1, x_i} u(x) = 0$ for all $i  \neq 1$.} as in \cite{Serrin71}  shows that   $D^2 v$  vanishes at $x$ as well. But then, the second-order Hopf boundary lemma, Lemma~\ref{lem:serrin}---applied to $v$ and the domain $U = U^*\cap \{x_n > t\}$---gives a contradiction.
\end{itemize}
\medskip  

    \noindent {\bf Step 2.} Conclusion.

    We split our analysis into several subdomains: 
    \begin{itemize}
        \item By Lemma~\ref{lem:gradn}, $\partial_{x_n} u>0$ in $\{u > 0\}\cap \{x_n\geq t/3\}\cap \{|x|\geq M\}$. 

        \item By the previous substep, we also know that $\partial_{x_n} u >0$ in $\{x_n\geq t\}\cap \overline{\{u>0\}}$. By continuity,   there is $\eps\in(0,2t/3]$ such that $\partial_{x_n} u>0$ in $\{x_n\geq t-\eps\}\cap \overline{\{u>0\}}\cap \{|x|\leq M\}$ as well. Together with the first bullet, we have $\partial_{x_n} u>0$ in $\{x_n\geq t-\eps\}\cap \overline{\{u>0\}}$. This ensures that $u\geq u_s^*$ in $\{s\leq x_n\leq s+\eps/2\}$ for every $s\in[t-\eps/4,t]$. Note that $s+\eps/2\geq t+\eps/4$ for this range of $s$.

        \item  By (b) in Lemma~\ref{lem:refln} we also have $u\geq u_s^*$ in $\{x_n\geq s\}\cap \{|x|\geq M_t\}$, for some $M_t \ge M$.

        \item Finally, there is $\tilde \eps\in (0, \eps)$ such that  $u\geq u_s^*$ in $\{x_n\geq s+\eps/2\}\cap \{|x|\leq M_t\}$ for all $s\in [t-\tilde\eps/4, t]$:
        
        By assumption, $u\geq u_t^*$ in $\{x_n\geq t\}$. Then, there is $\delta_\eps>0$ such that $u>u_t^*+\delta_\eps$ in $\overline{\{u_t^*>0\}}\cap \{x_n\geq t+\eps/4\}\cap\{|x|\leq M_t\}$: if equality were achieved at some $x$ with $u_t^*(x)>0$, this would contradict the interior maximum principle, and likewise if $x\in \FB(u_t^*)$ (and thus $x\in \FB(u)$ as well) by  Hopf's lemma.
        
        Hence,  $u>u_s^*+\delta_\eps/2$ in $\overline{\{u_s^*>0\}}\cap \{x_n\geq t+\eps/4\}\cap\{|x|\leq M_t\}$ for any $s\in[t-\tilde\eps/4,t]$ as well, for some $0<\tilde\eps \le \eps$ small,  by continuity and compactness. Since $s+\eps/2\geq t+\eps/4$ for this range of $s$, this confirms the claim opening this bullet. 
    \end{itemize}
    Putting all of the above together, we have shown that there is some  $\tilde\eps>0$ such that: For every $s\in [t-\tilde\eps/4,t]$, we have $\partial_{x_n} u > 0$ in $\{x_n\geq s\}\cap \overline{\{u>0\}}$ and $u\geq u_s^*$ in $\{x_n\geq s\}$. Hence $[t-\tilde\eps/4,t]\subset T$ as well. Since $t\in T$ with $t>0$ was arbitrary, this shows that $T$ is relatively open in $[0, \infty)$ and concludes the proof.
\end{proof}

\subsection{Non-compact free boundary case: symmetry in $x_1$}\label{sec:symx'}
 We assume the same setting as in the previous section.

\begin{lemma}\label{lem:expansion'}
    In the setting of Corollary~\ref{cor:errplus0}, $u$ is even in $x_n$ and up to a translation in the $x'=(x_1,...,x_{n-1})$ coordinates and a further rescaling\footnote{That is, considering $u(x'+x_0',x_n)$ for some appropriate $x_0'\in\R^{n-1}$ in place of $u$.} we have  
 \[
    |u -\tilde v| + |x| |\nabla(u -\tilde v)| \le |x|^{1-n-\alpha} \quad \mbox{in } \{u>0\}\cap \{x_n>0\} \setminus B_1,
    \]
    with
    \[
    \tilde v(x) := x_n - b + \frac{c}{|x- be_n|^{n-2}},
    \]
    for some $b, c > 0$, $|b|+|c|\le 1/2$.    
\end{lemma}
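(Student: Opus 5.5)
Evenness in $x_n$ is already given by Proposition~\ref{prop:symxn}. So $u_-(x',x_n)=u_+(x',-x_n)$, and by uniqueness of the asymptotic expansion in Corollary~\ref{cor:errplus0} the coefficients are related by $c_-=c_+=:c$ and $d_-=(d_+',-(d_+)_n)$. We work only with $u_+$ in $\{x_n>0\}$, where
\[
v_+=x_n-b+\frac{c}{|x|^{n-2}}+\frac{d'\cdot x'}{|x|^n}+\frac{(n-2)bc\,x_n}{|x|^n},\qquad b,c>0,
\]
with the signs supplied by Lemma~\ref{lem:precisectt}. The plan is to show that all these terms except $d'$ are exactly the Taylor expansion of $x_n-b+c|x-be_n|^{2-n}$, and then to remove $d'$ by a horizontal translation.

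First, Taylor expand the center shift:
\[
|x-be_n|^{2-n}=|x|^{2-n}+(n-2)b\,x_n|x|^{-n}+O(|x|^{-n}).
\]
Hence $c|x-be_n|^{2-n}$ reproduces the terms $c|x|^{2-n}$ and $(d_+)_n x_n|x|^{-n}$ up to an error $O(|x|^{-n})$, which is better than $|x|^{1-n-\alpha}$. Gradients behave in the same way, with one power better.

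Second, remove $d'$ by a translation $x'\mapsto x'+x_0'$. Expanding,
\[
c|x+(x_0',0)|^{2-n}=c|x|^{2-n}-(n-2)c\,\frac{x_0'\cdot x'}{|x|^n}+O(|x|^{-n}).
\]
Since $c>0$, choose $x_0'=d'/((n-2)c)$; this cancels the $d'$ term. The translation is purely horizontal, so it preserves evenness in $x_n$, the terms $x_n-b$, and the structure of $\Omega_\pm$, and it changes the remaining terms only by $O(|x|^{-n})$. The resulting bound on $\{u>0\}\cap\{x_n>0\}\setminus B_1$ is therefore $C|x|^{1-n-\alpha}$, with the gradient bound obtained the same way.

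Third, normalize the constants. Rescaling by $\lambda$ via $u\mapsto u(\lambda x)/\lambda$ sends $b\mapsto b/\lambda$, $c\mapsto c\lambda^{1-n}$, and the error constant $C\mapsto C\lambda^{-n-\alpha}$. The same scaling is consistent with the form of $\tilde v$, since $\lambda^{-1}c|\lambda x-be_n|^{2-n}=c\lambda^{1-n}|x-(b/\lambda)e_n|^{2-n}$. Taking $\lambda$ large makes the error constant at most $1$ and $b+c\le 1/2$. One caveat is that points of $\{x_n>0\}\cap\{u>0\}$ near the translated compact core must be excluded; the region outside $B_1$ after rescaling is handled by a slightly larger $\lambda$, since the expansion holds in $\Omega_+$ outside a fixed compact set.

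The main obstacle is not analytic but bookkeeping. We must check that after translation and rescaling the region $\{u>0\}\cap\{x_n>0\}\setminus B_1$ lies inside the (rescaled) $\Omega_+$, where the expansion is valid. This uses that $g^{(+)}>0$ and the graphicality from Corollary~\ref{cor:errplus0}, and that the $O(|x|^{-n})$ Taylor remainders are uniform there because $|x|\gtrsim 1$ while $b$ and $x_0'$ are small after rescaling.
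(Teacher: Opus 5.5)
Your proposal is correct and follows the same route as the paper. The paper also sets $x_0'=d'/((n-2)c)$ to cancel the horizontal dipole. It then uses $d_n=(n-2)bc$ to absorb $c|x|^{2-n}+d_nx_n|x|^{-n}$ into $c|x-be_n|^{2-n}$ up to $O(|x|^{-n})$, and finishes with a rescaling to make the error constant $1$ and $|b|+|c|\le 1/2$. The only differences are that you perform the two Taylor steps in the opposite order, and your remark that $c_-=c_+$ is unnecessary, since the estimate is only claimed in $\{x_n>0\}$.
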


\begin{proof}
The evenness in $x_n$ follows from Proposition~\ref{prop:symxn}. Take $b$, $c = c_+$ and $d = d_+$ from Corollary~\ref{cor:errplus0}, with $c>0$ and $d_n=(n-2)bc$ by Lemma~\ref{lem:precisectt} and Corollary~\ref{cor:errplus0}. Set
\[
x_0':=\frac{d'}{(n-2)c}
\]
and replace $u(x',x_n)$ by $u(x'+x_0',x_n)$. Then, using
\[
|x+(x_0',0)|^{2-n}
=
|x|^{2-n}-(n-2)\frac{x_0'\cdot x'}{|x|^n}+O(|x|^{-n}),
\]
together with the analogous differentiated estimate, the resulting horizontal dipole term cancels that of $v_+$ in Corollary~\ref{cor:errplus0}: Outside of a large ball,
\[
 |u-\bigl(x_n-b+c|x|^{2-n}+d_nx_n|x|^{-n}\bigr)| 
 \le C|x|^{1-n-\alpha}.
\]
Notice that, since \(d_n=(n-2)bc\),
\[
c|x-be_n|^{2-n}
=
c|x|^{2-n}+d_n\,x_n|x|^{-n}+O(|x|^{-n}).
\]
 Therefore, also incorporating the differentiated quantities, 
\[
 |u-\tilde v|+|x|\,|\nabla(u-\tilde v)|
 \le C|x|^{1-n-\alpha},
\]
in $\{u > 0\}\cap \{x_n > 0\}$ outside of a large ball. Finally, after a harmless rescaling---already allowed in Corollary~\ref{cor:errplus0}---we may assume that the estimate holds in $\R^n\setminus B_1$ with constant $1$, and that \(|b|+|c|\le 1/2\). 
\end{proof} 

We put ourselves in the setting given by this lemma for the remainder of the section. We will now show:
\begin{proposition}\label{prop:symx'}
    In the setting of Lemma~\ref{lem:expansion'}, $u$ is even in $x_1$. Moreover,  $\partial_{x_1} u< 0$ in $\{x_1> 0\}\cap \overline{\{u>0\}}$.
\end{proposition}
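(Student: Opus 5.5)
We run the moving planes method in the $x_1$ direction, mirroring the proof of Proposition~\ref{prop:symxn}. The difference is that the horizontal asymptotics are now second order: the leading profile $x_n-b$ is invariant under horizontal reflections, so the comparison at infinity must come from the term $c|x-be_n|^{2-n}$ with $c>0$ (Lemma~\ref{lem:precisectt}).

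Set $\tau_\lambda(x):=(2\lambda-x_1,x_2,\dots,x_n)$ and $u_\lambda:=u\circ\tau_\lambda$. Define
\[
T':=\{\lambda\ge 0:\ u\ge u_\lambda\ \text{in}\ \{x_1\le \lambda\}\ \text{and}\ \partial_{x_1}u<0\ \text{in}\ \{x_1>\lambda\}\cap\overline{\{u>0\}}\}.
\]
Applying the argument to $u(-x_1,\dots)$ as well, $0\in T'$ gives evenness in $x_1$ and the sign of $\partial_{x_1}u$. Exactly as before, $T'$ is a closed half-line, so the proof has three parts: nonemptiness, asymptotic control at infinity, and relative openness.

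\textbf{Asymptotic comparison.} By Lemma~\ref{lem:expansion'} and evenness in $x_n$, in $\{x_n>0\}$ we have $u=x_n-b+c|x-be_n|^{2-n}+O(|x|^{1-n-\alpha})$. For $x_1<\lambda$ the reflection satisfies $|\tau_\lambda x-be_n|>|x-be_n|$. Hence
\[
u-u_\lambda\ \ge\ c\bigl(|x-be_n|^{2-n}-|\tau_\lambda x-be_n|^{2-n}\bigr)-C|x|^{1-n-\alpha}.
\]
The bracket is comparable to $\lambda(\lambda-x_1)|x|^{-n}$ when $\lambda-x_1\lesssim |x|$, and to $|x|^{2-n}$ otherwise. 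This beats the error away from a neighbourhood of the hyperplane $\{x_1=\lambda\}$. Near that hyperplane we instead use the gradient expansion, $\partial_{x_1}u=-(n-2)c\,x_1|x-be_n|^{-n}+O(|x|^{-n-\alpha})<0$ in the region $x_1\gtrsim |x|^{1-\alpha}$. Together these should give the analogue of Lemma~\ref{lem:refln}: for each $\delta>0$ there is $R_\delta$ such that $u\ge u_\lambda$ in $\{x_1\le\lambda\}\setminus B_{R_\delta}$ for all $\lambda\ge\delta$, and the inequality holds globally for $\lambda$ large. The latter also uses the bounded free boundary region inside $B_1'\times[-1,1]$ together with Lipschitz bounds. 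Free boundary points require the same care as in Lemma~\ref{lem:refln}(b): we compare along horizontal segments and use that $\{u=0\}$ is described by the graphs $g^{(\pm)}$, whose expansion $g=b-c|x'|^{2-n}+\dots$ is radially increasing at infinity. From this, reflected zero sets lie inside zero sets.

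\textbf{Openness.} This repeats Steps 1–2 of the proof of Proposition~\ref{prop:symxn}. At $\lambda\in T'$ with $\lambda>0$, strict monotonicity on $\{x_1=\lambda\}$ follows from Hopf's lemma at interior points and from Serrin's corner Lemma~\ref{lem:serrin} at tangential free boundary points. Compactness on $B_{R_\delta}$ then gives a strict inequality $u>u_\lambda+\delta_\eps$ away from the hyperplane, so slightly smaller $\lambda$ remain in $T'$, while $|x|\ge R_\delta$ is handled by the asymptotic comparison.

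\textbf{Main obstacle.} The hardest step is the uniform asymptotic comparison near $\{x_1=\lambda\}$ at large $|x|$, especially close to the free boundary, where the gain $\lambda(\lambda-x_1)|x|^{-n}$ is only of order $|x|^{-n}$ and the error is $O(|x|^{1-n-\alpha})$. The first attempt is to split into the regions $\lambda-x_1\ge |x|^{1-\alpha/2}$ and $\lambda-x_1<|x|^{1-\alpha/2}$. In the first region we use the function values. In the second we use the sign of $\partial_{x_1}u$ from the gradient expansion, integrated along the segment joining $x$ and $\tau_\lambda x$, with the segment's distance from the axis making $x_1\gtrsim |x|$ relevant only when $|x'|$ is large. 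If the $x_1$-component is too small compared to $|x|$ (points near $\{x_1=0\}$, which lie far out in the other horizontal directions), we fall back on the function-value estimate with the sharper gain.
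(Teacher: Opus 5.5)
Your overall scheme matches the paper's. $T'$ is equivalent to the paper's set $T$, since $u\ge u_\lambda$ on $\{x_1\le\lambda\}$ if and only if $u\le u_\lambda$ on $\{x_1\ge\lambda\}$, and openness goes exactly as in Proposition~\ref{prop:symxn}. The gap is in the step that carries all the new difficulty: the analogue of Lemma~\ref{lem:refln1}(b) for bounded $\lambda\in[\delta,t_0]$. You leave it open, and the split you sketch cannot close it. Take $|x|\to\infty$ with $\lambda-|x|^{1-\alpha/2}<x_1<\lambda$.
\begin{itemize}
\item Function values fail. The gain $c\bigl(|x-be_n|^{2-n}-|\tau_\lambda x-be_n|^{2-n}\bigr)$ is of order $\lambda(\lambda-x_1)|x|^{-n}$, e.g.\ $\lambda^2|x|^{-n}$ at $x_1=0$. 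This is far below the pointwise errors $O(|x|^{1-n-\alpha})$, so your fallback on function values near $\{x_1=0\}$ fails, and no sharper gain is available there.
\item The sign of $\partial_{x_1}u$ does not help. For $x_1<0$ the segment from $\tau_\lambda x$ to $x$ enters $\{y_1<0\}$, where the expansion gives $\partial_{x_1}u>0$, so monotonicity goes the wrong way. Near $\{y_1=0\}$ the sign is unknown.
\end{itemize}
Incidentally, the gradient error is $|x|^{-n-\alpha}$, so the expansion already gives $\partial_{x_1}u<0$ for $x_1\gtrsim|x|^{-\alpha}$ (this is Lemma~\ref{lem:grad'}), not only for $x_1\gtrsim|x|^{1-\alpha}$.

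The missing idea is to integrate the gradient expansion itself, not its sign. Never estimate $u-\tilde v$ at $x$ and at $\tau_\lambda x$ separately. Instead, bound $(u-\tilde v)(x)-(u-\tilde v)(\tau_\lambda x)$ by integrating $|\nabla(u-\tilde v)|\le|y|^{-n-\alpha}$ along the horizontal segment between them, which gives at most $2(\lambda-x_1)|x''|^{-n-\alpha}$. This error is proportional to the distance to the hyperplane, just like the gain. So where $|x_1|\lesssim|x''|$ their ratio is $\gtrsim\lambda|x''|^{\alpha}$, and the comparison holds once $|x''|\ge R_\delta$, uniformly in $\lambda\in[\delta,t_0]$. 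Where the reflected point has first coordinate $\gtrsim|x''|$, plain function values do suffice, since the gain is then $\gtrsim\lambda|x|^{1-n}$. This is the content of \eqref{ineq1} and \eqref{impli11}.

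Three further points are needed to make this work.
\begin{itemize}
\item The segment must lie in $\{u>0\}$. The paper gets this by a continuity argument starting from the point with $x_1>\lambda$: monotonicity from Lemma~\ref{lem:grad'} while $y_1\ge\delta/2$, then the integrated estimate.
\item When the comparison point lies in $\{u=0\}$, the paper slides in the $e_n$ direction. Going through the expansion of $g^{(\pm)}$ instead, as you propose, runs into the same near-hyperplane problem.
\item The initialization for large $\lambda$ near the origin needs more than Lipschitz bounds. You need a strict margin $u\ge(|x_n|-b+\eta)_+$ on a fixed ball, from Lemma~\ref{lem:barrier} together with the strong maximum principle and Hopf's lemma. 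Combine it with $u(\cdot+2\lambda e_1)\to(|x_n|-b)_+$ locally uniformly.
\end{itemize}
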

As in the previous section, we start with some precise asymptotic properties.
\begin{lemma}[Gradient asymptotics]\label{lem:grad'}
For any $\delta > 0$, there is $M = M(\delta, n ) > 0$ such that: 
$$\partial_{x_1} u(x)<0\qquad\text{in}\quad \{x_1 \ge \delta/2\}\cap \{|x|\ge M\}\cap \{u > 0\}.$$ 
\end{lemma}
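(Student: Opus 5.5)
The plan is to read off the sign of $\partial_{x_1}u$ directly from the gradient expansion in Lemma~\ref{lem:expansion'}, using the evenness in $x_n$ to reduce to the upper half. By Proposition~\ref{prop:symxn}, $u$ is even in $x_n$, so it suffices to prove the claim in $\{x_n\ge 0\}$.

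In $\{u>0\}\cap\{x_n>0\}\setminus B_1$ we have $u=u_+$, and Lemma~\ref{lem:expansion'} gives
\[
\partial_{x_1}u=\partial_{x_1}\tilde v+O(|x|^{-n-\alpha})=-(n-2)c\,\frac{x_1}{|x-be_n|^{n}}+O(|x|^{-n-\alpha}).
\]
Here $c>0$, and for $|x|\ge M\ge 2b$ we have $|x-be_n|\le 2|x|$. On the set $\{x_1\ge \delta/2\}\cap\{|x|\ge M\}$ the leading term is therefore bounded above by
\[
-(n-2)c\,2^{-n}\,\frac{\delta}{2}\,|x|^{-n}.
\]

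The main difficulty is that this leading term is only of size $\delta|x|^{-n}$ when $x_1\sim\delta$, while the error is $C|x|^{-n-\alpha}$. The decay gap $|x|^{-\alpha}$ is exactly what rescues us: the leading term dominates once
\[
(n-2)c\,2^{-n-1}\delta>C|x|^{-\alpha},
\]
that is, for
\[
|x|\ge M(\delta):=\max\Big\{2b,\,\big(2^{n+1}C/((n-2)c\,\delta)\big)^{1/\alpha}\Big\}.
\]
If instead $x_1$ is large, say $x_1\ge\eta|x|$, the leading term is of order $|x|^{1-n}$ and wins even more easily. So the single condition $x_1\ge\delta/2$ together with $|x|\ge M(\delta)$ covers every case in the upper half.

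It then remains to handle points with $x_n\le 0$. For these we apply the reflection $x_n\mapsto -x_n$, which preserves $x_1$ and $|x|$ and maps $\{u>0\}$ to itself by evenness. One also has to check points with $x_n=0$ lying in $\{u>0\}$. Since $g^{(\pm)}>0$ by Lemma~\ref{lem:precisectt}, any such point has $|x'|\le 1$, so it lies inside $B_M$ once $M>1$ and needs no treatment. Finally, we must make sure we are in the region $\{|x|\ge1\}$ where the expansion holds; this is automatic once $M\ge1$.

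The constant $M$ depends on $u$ only through the normalized constants $b,c$ and the expansion constant. These are fixed by the normalization $|b|+|c|\le 1/2$, but we need $c>0$ bounded away from zero, which is a fact about this particular $u$. So, as is implicit in the statement, $M$ depends on the solution as well as on $\delta$ and $n$.
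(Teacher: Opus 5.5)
Your proposal is correct and follows the paper's approach. The paper's proof is the one-line observation that the gradient expansion of Lemma~\ref{lem:expansion'}, combined with $\partial_{x_1}\tilde v=-(n-2)c\,x_1|x-be_n|^{-n}<0$ in $\{x_1>0\}$, gives the sign. You make explicit the comparison between the leading term, of size $\delta|x|^{-n}$, and the error, of size $|x|^{-n-\alpha}$; you handle $\{x_n\le 0\}$ by evenness; and you correctly note that $M$ also depends on $u$ through $c>0$.

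One cosmetic slip: after the $x'$-translation and rescaling in Lemma~\ref{lem:expansion'}, the set $\{u>0\}\cap\{x_n=0\}$ need not lie in $\{|x'|\le 1\}$. It is still bounded, which is all you use.
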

\begin{proof}
It follows from  Lemma~\ref{lem:expansion'} using
\[
\partial_{x_1} \tilde v(x)= 
\partial_{x_1}\left(\frac{c}{|x-b e_n|^{n-2}}\right)
= -(n-2)c\,\frac{x_1}{|x-b e_n|^n} <0
\]
in $\{x_1>0\}$.
\end{proof}

We write \(x=(x_1,x'')\), with \(x''=(x_2,\ldots,x_n)\in\R^{n-1}\). 
In this splitting we denote by
\[
B''_r(y''):=\{z''\in\R^{n-1}: |z''-y''|<r\},\qquad B''_r:=B''_r(0),
\]
the ball in the \(x''\)-variables. We define
\[
\sigma^\circ_t (x) := (2t-x_1, x'')\quad\mbox{and}\quad  x^\circ_t := \sigma^\circ_t (x),
\]
\[
u^\circ_t = u \circ \sigma^\circ_t \qquad \mbox{i.e.,}\quad  u_t^\circ(x_1, x'') := u(2t-x_1, x''). 
\]

\begin{lemma}[Horizontal initialization and reflection asymptotics] \label{lem:refln1}
In the setting of Lemma~\ref{lem:expansion'}, we have:

\noindent (a) There exists $t_0>8$ (large) such that for all $t>t_0$ we have:
$$
u\leq u_t^\circ \qquad\mbox{in}\quad\{x_1\geq t\} 
$$

\noindent (b) Given $\delta >0$, there is $R_\delta \ge 0$  such that, if $t\ge \delta$, 
$$
u\leq u_t^\circ \qquad\mbox{in}\quad\{x_1\geq t\}\setminus B_{R_\delta}.
$$
\end{lemma}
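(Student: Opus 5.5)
The plan is to mirror the proof of Lemma~\ref{lem:refln}, now in the horizontal direction. We use the evenness in $x_n$ from Proposition~\ref{prop:symxn}, the expansion of Lemma~\ref{lem:expansion'}, and the gradient sign from Lemma~\ref{lem:grad'}. By evenness in $x_n$, it suffices to verify $u(x)\le u_t^\circ(x)$ for points with $x_n\ge 0$. Note that $\sigma^\circ_t$ preserves $x''$, and in particular preserves $x_n$. Fix $x$ with $x_1\ge t$. If $u(x)=0$ there is nothing to prove, so we assume $u(x)>0$. The point is then to compare $u$ at $x$ and at $x^\circ_t=(2t-x_1,x'')$, which satisfies $|2t-x_1|\le x_1$ and $x_1-(2t-x_1)=2(x_1-t)\ge 0$.

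For (a), we take $t_0$ large and split into cases as in Lemma~\ref{lem:refln}.

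\emph{Case $x\in B_M$.} This is impossible once $t_0>M$, since $x_1\ge t$.

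\emph{Case $|x|\ge M$ and $x$ far from the free boundary vertically.} Here we use the expansion $u\approx x_n-b+c|x-be_n|^{2-n}$ at both $x$ and $x^\circ_t$. Since $|x^\circ_t-be_n|\le |x-be_n|$, we get $c|x^\circ_t-be_n|^{2-n}\ge c|x-be_n|^{2-n}$. The error of order $|x|^{1-n-\alpha}$ has to be beaten by this gain, which is precisely the delicate part. To handle it, I would instead argue along the horizontal segment from $x^\circ_t$ to $x$. Wherever the segment lies in $\{u>0\}\cap\{|x_1|\ge \delta/2\}\cap\{|y|\ge M\}$, Lemma~\ref{lem:grad'} shows that $u$ decreases in $x_1$ there. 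Combined with the evenness-type comparison near $x_1\approx 0$, this gives $u(x)\le u(|2t-x_1|,x'')$.

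\textbf{Main obstacle.} The key issue is that the segment may pass through $\{u=0\}$ or through $B_M$. Positivity along the segment comes from the graph structure \eqref{eq:758915gsA}: at height $x_n$, the set $\{u>0\}\cap\{|x'|>1\}$ is $\{g^{(+)}(x')<x_n\}$. From the expansion, $g^{(+)}(x')=b-c|x'|^{2-n}+O(|x'|^{1-n-\alpha})$ is radially increasing for large $|x'|$ (compare Remark~\ref{rem:asympintro}). Consequently, shrinking $|x'|$ keeps the point inside $\{u>0\}$, and if $x\in\{u>0\}$ then so is the whole segment outside a large ball.

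I expect the hardest point to be the region where $2t-x_1$ is negative, so that the segment crosses $x_1=0$, possibly near $B_M$. For $2t-x_1<0$ I would instead compare with the point $(x_1-2t,x'')$, using the monotonicity in $x_1$ on $\{x_1>0\}$. However, this requires the symmetric monotonicity on $\{x_1<0\}$, i.e.\ $\partial_{x_1}u>0$ there, which follows from the expansion in the same way as Lemma~\ref{lem:grad'}. The remaining task is then to compare $u$ at $(y_1,x'')$ and at $(-y_1,x'')$ for large $|x|$. The expansion yields equality of the leading terms, with an error $O(|x|^{1-n-\alpha})$, while the gain from the difference $|x_1|-|y_1|\ge \delta$ is of order $\delta\,|x_1||x|^{-n}$. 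This gain dominates only when $x_1$ is comparable to $|x|$. I would therefore handle the regime $|x_1|\ll|x|$ by a direct analysis, and this is the step where I am least sure.

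For (b), the same argument applies with $R_\delta$ large enough that the gradient sign of Lemma~\ref{lem:grad'}, for this $\delta$, holds on $\{|x|\ge R_\delta\}$ away from $B_M$. Part (a) covers $t>t_0$, and the compact range $t\in[\delta,t_0]$ only enlarges $R_\delta$ by a fixed amount.
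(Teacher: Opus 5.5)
There is a genuine gap, and it sits exactly where you flagged it. Take the regime $x_1\lesssim|x''|$, where the segment from $x$ to $x^\circ_t$ may cross the slab $\{|y_1|<\delta/2\}$ on which Lemma~\ref{lem:grad'} gives no sign. There, comparing $u$ with $\tilde v$ pointwise at two points costs an error of order $|x|^{1-n-\alpha}$. The gain of $\tilde v$ between $x$ and $x^\circ_t$ is only of order $t(x_1-t)|x|^{-n}\le t\,x_1|x|^{-n}$. So the argument breaks down as soon as $t\,x_1\ll|x|^{1-\alpha}$, which does happen when $|x''|$ is large.

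Your detour through the mirror point $(-y_1,x'')$ does not fix this. The ``evenness-type comparison near $x_1\approx 0$'' cannot use evenness in $x_1$, since Proposition~\ref{prop:symx'} deduces that evenness \emph{from} this lemma. Comparing $(y_1,x'')$ with $(-y_1,x'')$ through the expansion reintroduces precisely the two pointwise errors you cannot afford.

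The missing idea is to use the gradient part of Lemma~\ref{lem:expansion'} to control the \emph{increment} of $u-\tilde v$ along the horizontal segment $[x-se_1,x]$. Since $x''$ is fixed along it, every point $y$ on the segment has $|y|\ge|x''|$, whence
\[
\bigl|(u-\tilde v)(x-se_1)-(u-\tilde v)(x)\bigr|\le s\,|x''|^{-n-\alpha}.
\]
This error is linear in $s\le 2x_1$, exactly like the mean-value gain $\tilde v(x-se_1)-\tilde v(x)\gtrsim x_1\bigl(x_1-|x_1-s|\bigr)|x|^{-n}$. When $x_1\le|x''|$ one has $|x|\le\sqrt2\,|x''|$, and $x_1-|x_1-s|\gtrsim t$ in (a), resp.\ $\ge\delta/2$ in (b). Hence the gain wins once $t$ (resp.\ $|x''|$) is large.

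Positivity of $u$ along the segment then follows by continuity from $u(x-se_1)>u(x)>0$. Your justification via $g^{(+)}$ being ``radially increasing'' is not enough: an asymptotic for $\partial_r g^{(+)}$ does not compare $g^{(+)}$ at points with different angular positions. The pointwise two-point comparison is only needed when $x_1\ge|x''|$. There the gain is of order $|x^\circ_t|^{1-n}$ (times $\delta$ in (b)), and both errors are at most $|x^\circ_t|^{1-n-\alpha}$ because $|x|\ge|x^\circ_t|$.

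Second, for (a) you do not treat the case where $x^\circ_t$ lies in a fixed ball $B_M$ around the origin. Your case ``$x\in B_M$'' is the wrong one. It is $x^\circ_t$ that can be near the core, namely when $x$ is close to $2te_1$ with $|x''|$ bounded. There neither the expansion nor Lemma~\ref{lem:grad'} says anything about $u(x^\circ_t)$, and any segment from $x$ to $x^\circ_t$ ends in a region with no asymptotic information.

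This case needs a global ingredient. Lemma~\ref{lem:barrier}, together with the strong maximum principle and Hopf's lemma, gives a strict barrier $u\ge(|x_n|-b+\eta)_+$ on $B_M$ for some $\eta>0$. The expansion gives $u(\cdot+2te_1)\to(|x_n|-b)_+$ uniformly on $B_M$ (on the positivity set) as $t\to\infty$. Together these yield $u\le u^\circ_t$ on $B_M(2te_1)$ for $t$ large. In (b) the issue disappears because you only need the inequality outside a large set. The paper excludes the cylinder $(t,3R_\delta)\times B''_{R_\delta}$ with $t<R_\delta$, which forces $|x^\circ_t|\ge R_\delta$.
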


\begin{proof}
Given $x\in \R^n$ and  $s>0$, we  write 
\[
x_s := x-se_1. 
\]

Recall that 
\[
\tilde v(x+ be_n ) := x_n + \frac{c}{|x|^{n-2}}.
\]

On the one hand,  by the mean value theorem, given $x\in \{x_1\ge t\}$, $s\in (0, 2x_1)$, for any given $T_1$ and $T_2$ with 
\[
|x_1-s| \le T_1< T_2 \le x_1
\]  
there exists $\xi\in (T_1, T_2)$ such that 
\[
\begin{split}
\tilde v(x_s+ be_n)- \tilde v (x+ be_n) &= c((x_1-s)^2 +|x''|^2)^{1-n/2} - c((x_1)^2 +|x''|^2) ^{1-n/2}  \\
&\ge c(T_1^2 +|x''|^2)^{1-n/2} - c((T_2)^2 +|x''|^2)^{1-n/2} \\
&= c(n/2-1)2\xi (\xi^2 +|x''|^2)^{-n/2}(T_2-T_1).
\end{split}
\]
Hence, after a vertical translation (using $b < 1$):
\begin{equation}\label{ineq1}
\tilde v(x_s)- \tilde v (x) \ge \bar c \frac{T_1(T_2-T_1)}{(T_2 +|x''|+b)^{n}}\quad\text{in}\quad \{x_n > 0\}\setminus B_4. 
\end{equation}

On the other hand,   denoting $[x-se_1, x]$ the segment joining the points $x-se_1$ and $x$ for  $s>0$, the following implication holds:
\begin{equation}\label{impli11}
   [x_s, x]\subset \{u >0\}\cap\{x_n>0\} \setminus B_1  \Longrightarrow\   \big|(u -\tilde v)(x_s)  -(u -\tilde v)(x)\big| < |x''|^{-n-\alpha} s.  
\end{equation}
Indeed, it follows from the mean value theorem applied to $(u-\tilde v)$, using the gradient bound in Corollary~\ref{cor:errplus0}.

\medskip

\noindent {\bf Step 1.} We begin by showing (a). 

Let $M_1$ be given by Lemma~\ref{lem:grad'}, applied with $\delta=1$. Assume that $t>t_0$, with $t_0>M_1+1$ large to be chosen. We want to show that $u_t^\circ(x) \ge u(x)$ for all $x\in \{x_1 \ge t\}$.  If $x\in \{u=0\}$ there is nothing to show, so we may assume---taking also into account the symmetry in the $x_n$ direction---that 
\[
    x\in \{u>0\} \cap \{x_1 \ge t\}\cap\{x_n\ge 0\}=\Omega_+\cap \{x_1 \ge t\}.
\]

We now distinguish four cases: 
\begin{itemize}
    \item  If $ t<x_1<2t-1 $ and $x^\circ_t\in \R^n \setminus B_{t_0}$ we conclude $u(x_t^\circ)-u(x)>0$ by monotonicity, i.e. Lemma~\ref{lem:grad'}.

    \item  If $2t-1<x_1 \le  |x''|$ (so, in particular, $|x''|> 2t_0-1>M_1$), we claim that  $u(x_s)\ge u(x)>0$ for all $s\in (0, 2(x_1-t))$. First, if $s\in (0, x_1-1)$, then  $x_s\in \{y:y_1>1\}\setminus B_{M_1}$; by monotonicity (i.e. Lemma~\ref{lem:grad'}) we find that $u(x_s)\ge u(x)>0$ again. 
 
But then, from this point on, we can actually continue up to reaching the point $x^\circ_t$ (that is, $s=2(x_1-t)$) while keeping this relation. Indeed, \eqref{ineq1} for $T_1 = \max(x_1/2,|x_1-s|)$ and $T_2=x_1$  (using $x_1-s\le 1$ and hence $x_1- |x_1-s|\ge t$ then $T_2-T_1\ge t/2$) tells us:
\[
\tilde v(x_s)- \tilde v (x) \ge \bar c \frac{\max(x_1/2,|x_1-s|)t}{|x|^{n}}> 0.
\]
Then \eqref{impli11} gives:
\[
\begin{split}
    u(x_s)- u(x) > \tilde v(x_s)-\tilde v(x) -|x''|^{-\alpha-n} s \ge  \frac{\bar c x_1t/4}{|x|^{n}} - 2x_1|x''|^{-\alpha-n} .
\end{split}
\]
For $t\ge t_0$ (large enough), since $|x|$ and $|x''|$ are comparable by the assumption $x_1\leq |x''|$, the right hand-side above is positive, and we can continue until proving $u(x_t^\circ)- u(x) >0$.

\item If $x_1 \ge \max(2t-1, |x''|)$ and $x^\circ_t\notin B_{t_0}$: Assume first that additionally $x^\circ_t\in \{u > 0\}$; we will later reduce to this case. Then, by the triangle inequality  and using \eqref{ineq1} with $s=2(x_1-t)$, $|x_1-s|=|2t-x_1|$,  $T_1= \max(|2t-x_1|,|x''|-1)$, $T_2=T_1 +1 \le x_1$:
    \[
    \begin{split}
           u(x^\circ_t) &\ge  \tilde v(x^\circ_t) - \frac{1}{|x^\circ_t|^{n+\alpha-1}} 
           \\
           &\ge \tilde v(x) +  c_1 \frac{\max(|2t-x_1|,|x''|-1)}{(
        \max(|2t-x_1|+1,|x''|)+|x''|+b)^n}-|x_t^\circ|^{1-n-\alpha}
           \\
                      &\ge \tilde v(x) +  c_1 \frac{|x_t^\circ|}{(
        |2t-x_1|+1+|x''|+b)^n}-|x_t^\circ|^{1-n-\alpha}
           \\
           &\ge  u(x)   + c_2 |x^\circ_t|^{1-n} -|x^\circ_t|^{1-n-\alpha}-|x|^{1-n-\alpha}
           \\
           & \ge  u(x) + |x^\circ_t|^{1-n}(c_2 - 2|x^\circ_t|^{-\alpha})\\
           &> u(x)+ \frac{c_2}{2}|x^\circ_t|^{1-n}
    \end{split}
    \]
whenever $t_0$ is large enough, where we have also used  $|x_t^\circ|\le |x|$. In particular, $u(x^\circ_t)>u(x)$.

It remains to remove the additional assumption that $x^\circ_t\in \{u > 0\}$. Assume, for contradiction, that $x^\circ_t\in \{u = 0\}$ instead; sliding in the $e_n$ direction we find $s>0$ with $\sigma_t^\circ (x + se_n)\in\FB(u)$ and $(x + se_n)\in\{u>0\}$. Then, for every $h>0$ we have $\sigma_t^\circ (x + (s+h)e_n)\in\{u>0\}$ and $(x + (s+h)e_n)\in\{u>0\}$, thus we fall into one of the previous cases. In either case, this shows that $u(\sigma_t^\circ (x + (s+h)e_n))> u(x + (s+h)e_n)$, thus (sending $h\to 0$) we find $u(\sigma_t^\circ (x + se_n))\geq u(x + se_n)$. On the other hand $u(x + se_n)>0$ and $\sigma_t^\circ (x + se_n)\in\FB(u)$, reaching a contradiction.

\item Finally, if $x^\circ_t \in  B_{M_1}$ then $x\in B_{M_1}(2t e_1)$.
In this case, using Lemma~\ref{lem:barrier}, the interior maximum principle, and Hopf's lemma, there is $\eta > 0$ such that
\[
u \ge (|x_n|-b +\eta)_+\qquad\text{in}\quad B_{M_1} \quad\Longleftrightarrow \quad u_t^\circ \ge (|x_n|-b +\eta)_+\qquad\text{in}\quad B_{M_1}(2te_1).
\]

On the other hand, by the expansion in Lemma~\ref{lem:expansion'} we know that
\[
u(x+ 2te_1) \to (|x_n|-b)_+\quad\text{uniformly in } B_{M_1},\quad\text{as}\quad t\to\infty. 
\]
That is, for $t$ large enough, we have $u < |x_n|- b +\eta$ in $B_{M_1}(2te_1)$,
which shows $u_t^\circ \ge u$ in $B_{M_1}(2te_1)$. 
\end{itemize}

\medskip

\noindent {\bf Step 2.} Let us now show (b). We may assume that $t<t_0$ since otherwise we use case (a). 
We need to show that, given $\delta\in (0, t_0)$,  there is $R_\delta >t_0$ such that, for any $t \in (\delta, t_0)$, 
\[
u\leq u_t^\circ \qquad\mbox{in}\quad\{x_1\geq t\}\setminus {\mathcal C}_{R_\delta},\qquad\text{where}\quad {\mathcal C}_{R_\delta} :=  (t, 3R_\delta)\times B''_{R_\delta}
\]
As before, we may assume $x\in \{u>0\} \cap \{x_1 \ge t\} \cap \{x_n > 0\}\setminus {\mathcal C}_{R_\delta}  $. To prove that $u(x^\circ_t)>u(x)$ we argue as follows by separating between two regimes:
\begin{itemize}
\item If $x_1\le|x''|$,  we will show that $u(x_s)>u(x)>0$ for all $s\in (0,2(x_1-t))$. 
If $s\in (0,x_1-\delta/2)$ then $x_s \in  \{x_1 >\delta/2\}$ and we argue by monotonicity (provided we choose $R_\delta> M_{\delta/2}$). For $s\in (x_1-\delta/2, 2(x_1-t))$, instead, we use \eqref{ineq1} and \eqref{impli11}.

Arguing as in the second bullet in part (a), we have 
\[
u(x_s) - u(x) > \tilde v(x_s) - \tilde v(x) - |x''|^{-\alpha-n}s \ge c \frac{x_1(x_1-|x_1-s|)- 2x_1|x''|^{-\alpha}}{|x''|^n}.
\]
 Since $x_1-|x_1-s| > \delta/2$ and $|x''|\ge R_\delta$, for $R_\delta$ large enough we get the positive sign on the right-hand side, for all $s$ up to $2(x_1-t)$.

\item If $x_1>|x''|$,  since $t<R_\delta$ and $x\in \R^n  \setminus {\mathcal C}_{R_\delta}$, we have $x^\circ_t\in \R^n \setminus B_{R_\delta}$. Thus, if $u(x^\circ_t)>0$, using \eqref{ineq1} with $T_1= \max(|2t-x_1|,|x''|-\delta)$, $T_2=T_1 +\delta \le x_1$ as in the third bullet of part (a) we get
 \[
    \begin{split}
           u(x^\circ_t) &\ge \tilde v(x) +  c_1 \delta \frac{\max(|2t-x_1|,|x''|-\delta)}{(
        \max(|2t-x_1|+\delta,|x''|)+|x''|+b)^n}-|x_t^\circ|^{1-n-\alpha}
           \\
                      &\ge \tilde v(x) +  c_1\delta \frac{|x_t^\circ|}{(
        |2t-x_1|+\delta+|x''|+b)^n}-|x_t^\circ|^{1-n-\alpha}
           \\
           & \ge  u(x) + |x^\circ_t|^{1-n}(c_2 - 2|x^\circ_t|^{-\alpha})> u(x), 
    \end{split}
\]
since $|x_t^\circ|> R_\delta$, by taking $R_\delta$ large enough. The case $u(x_t^\circ) = 0$ can be argued as before. 
\end{itemize}

\end{proof}

\begin{proof}[Proof of Proposition~\ref{prop:symx'}] Let $T\subset [0, \infty)$ be defined as 
\[
T := \{t \in [0, \infty) : u \le u_t^\circ\ \  \text{in} \ \  \{x_1\ge t\}\quad \text{and}\quad \partial_{x_1} u < 0\ \  \text{in} \ \  \{x_1> t\}\cap\overline{\{u > 0\}}\, \}. 
\]
 We want to show that $0\in T$: This gives  $u\leq u_0^\circ$ and $\partial_{x_1} u < 0$ in $\{x_1 > 0\}\cap \overline{\{u > 0\}}$, and arguing symmetrically we deduce  $u=u_0^\circ$, i.e., $u$ is even in $x_1$, as we wanted. 

The proof proceeds now exactly as that  of Proposition~\ref{prop:symxn}, using Lemmas~\ref{lem:grad'} and \ref{lem:refln1} in place of Lemmas~\ref{lem:gradn} and \ref{lem:refln}. 
\end{proof}

\subsection{Proof of Theorems~\ref{thm:axymR3} and \ref{thm:axymRn}}
We can now prove the main theorems, by putting together the results in the article.
\begin{proof}[Proof of Theorem \ref{thm:axymRn}]
By Proposition~\ref{prop:FBdichotomy}, either $\FB(u)$ is bounded or there is some $R_0=R_0(u)$ such that $\FB(u)\cap \partial B_R\neq \emptyset$ for every $R\geq R_0$. In the first case, Proposition~\ref{prop:compactclass} gives the result. In the second one, we can put ourselves in the setting of Proposition~\ref{prop:FBdichotomy} and Corollary~\ref{cor:errplus0}. Then, Proposition~\ref{prop:symxn} shows that $u$ is even in $x_n$ and $\partial_{x_n} u>0$ in $\{x_n>0\}\cap \{u>0\}$. By Lemma~\ref{lem:expansion'}, performing a translation in the $x'=(x_1,...,x_{n-1})$ coordinates, and up to a further rescaling, it additionally satisfies the asymptotics indicated there.

To see that $u$ is radially symmetric in $x_1,...,x_{n-1}$, i.e. that $u(x^1)=u(x^2)$ if $x^1_n=x^2_n$, $|(x^1)'|=|(x^2)'|$ and $x^1\neq x^2$, let $e=\frac{x^2-x^1}{|x^2-x^1|}\in{\rm span}(e_1,...,e_{n-1})\cap \Sph^{n-1}$. The reflection with respect to $H_e=\{e\cdot y=0\}$ sends $x^1$ to $x^2$, thus it suffices to see that $u$ is even with respect to $H_e$. Let $\mathcal R:\R^{n-1}\to\R^{n-1}$ denote a rotation in the $x'$ coordinates which sends $e_1$ to $e$. The new function $\widetilde u(x)=u(\mathcal R x',x_n)$ is still a solution to the Bernoulli problem, even in $x_n$, satisfying the asymptotics in Lemma~\ref{lem:expansion'}. Then, Proposition~\ref{prop:symx'} shows that $\widetilde u$ is even with respect to $H_1=\{x_1=0\}$, which means that $u$ is even with respect to $H_e$. It moreover shows that $\partial_{x_1}\widetilde u<0$ in $\{x_1>0\}\cap\{u>0\}$, from which the negativity condition on $\partial_r u$ in Theorem~\ref{thm:axymRn} is seen to follow. Finally, combining the symmetries and monotonicity relations for $u$ with its asymptotic expansion in Lemma~\ref{lem:expansion'}, the graphical description for $\{u=0\}$---including the explicit asymptotics in Remark~\ref{rem:asympintro}---follows.
\end{proof}
The three-dimensional case follows unconditionally:
\begin{proof}[Proof of Theorem \ref{thm:axymR3}]
The result follows from Theorem~\ref{thm:axymRn} since $n=3$ is admissible there.
\end{proof}

\section{Total mean curvature}\label{sec:totHcurv}
\begin{definition}\label{def:totHcurv}
    Let $\Sigma^{n-1}\subset\R^n$ be an embedded hypersurface. We define the \textit{total mean curvature} of $\Sigma$ in an open subset $U\subset \R^n$ as
$$\mathbb H(\Sigma, U):=\int_{\Sigma\cap U} |{\rm H}_\Sigma|^{n-1}\, d\mathcal{H}^{n-1}.$$
We say that $\Sigma$ has \emph{finite total mean curvature} if $\mathbb H(\Sigma,\R^n)<\infty$.
\end{definition}
\begin{remark}
    Mean curvature plays a special role in the theory of the Bernoulli problem: Global solutions have mean-convex free boundaries (see, e.g., \cite[Lemma 3.2]{CFFS25}), and mean curvature is a main component in the stability inequality \eqref{eq:Q_nonnegative}. In fact, finite total (mean) curvature in two dimensions is precisely the condition used in \cite{BK24} to show that free boundaries of finite index solutions in $\R^2$ are finite unions of curves, reducing the classification result there to \cite{Tra14}.
\end{remark}
The goal of this section is to show that finite total mean curvature alone is a sufficient condition to obtain axial symmetry in all dimensions\footnote{In this way, total mean curvature seems to play a similar role to total curvature $\mathbb K(\Sigma,U):=\int_{\Sigma\cap U} |\mathrm{II}_{\Sigma}|^{n-1}\, d\mathcal{H}^{n-1}$ in minimal surface theory. Finite total curvature is known to imply finite index for global minimal hypersurfaces, as well as a strong regularity property for their ends, see \cite{Anderson84, Cho24Index}.}.
\begin{theorem}\label{thm:fincurv}
    Let $n\geq 3$, and let $u:\R^n\to[0,\infty)$ be a classical solution to the Bernoulli problem which is not one-dimensional. If $\FB(u)$ has finite total mean curvature, then the exact same conclusions as in Theorem~\ref{thm:axymRn} follow.
\end{theorem}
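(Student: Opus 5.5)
The plan is to reduce Theorem~\ref{thm:fincurv} to the structural input used in the finite index case. Remark~\ref{rem:flat_away} shows that this input is exactly a decomposition $u=u_++u_-$ outside a large ball, with $u_\pm$ asymptotically flat classical solutions. Once that decomposition is available, everything else goes through verbatim for all $n\ge3$, since nothing downstream uses stability or $n<n_*$: Corollary~\ref{cor:errplus0} (via Corollary~\ref{cor1} and Theorem~\ref{thm:highordexp}), Lemmas~\ref{lem:barrier} and \ref{lem:precisectt}, and the moving plane arguments of Propositions~\ref{prop:symxn} and \ref{prop:symx'}. The compact free boundary case is handled by Proposition~\ref{prop:compactclass} as before. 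So the task is to replace Proposition~\ref{prop:FBdichotomy} with a version whose hypothesis is finite total mean curvature instead of finite index with $n<n_*$.

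\textbf{Step 1: scale-invariant smallness.} The quantity $\mathbb H$ is scale invariant, since $|{\rm H}|^{n-1}\,d\mathcal H^{n-1}$ is invariant under $x\mapsto x/r$. Finiteness therefore gives $\mathbb H(\FB(u_r), B_{16}\setminus B_{1/16})\to0$ as $r\to\infty$ for the rescalings $u_r(x)=u(rx)/r$. Weiss monotonicity together with $|\nabla u|\le1$ (Lemmas~\ref{lem:weissformula}, \ref{lem:grad1bound}) gives $W(u,2r)-W(u,r)\to0$. I will then run the compactness argument of Lemma~\ref{lem:blowdown} along any sequence $r_k\to\infty$ with $\FB(u)\cap\partial B_{r_k}\ne\varnothing$. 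The blow-down limit $u_\infty$ is a one-homogeneous stationary solution, and in the limit its free boundary should have ${\rm H}=0$ away from the origin. A one-homogeneous solution whose free boundary is a cone with zero mean curvature has $\Delta u=0$ inside a cone whose boundary is minimal. Using the equation on the link sphere ($\Delta_{\Sph}\phi+(n-1)\phi=0$ with $|\nabla\phi|=1$ on the boundary), the mean curvature is ${\rm H}=\partial_{\nu\nu}u$ up to sign, and it vanishes only if $u$ is flat. This should force $u_\infty$ to be a half-plane or a wedge $|e\cdot x|$, and then the rest of the proof of Proposition~\ref{prop:FBdichotomy} applies.

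\textbf{Main obstacle: compactness without stability.} In Lemma~\ref{lem:blowdown} the curvature estimates \eqref{eq:89g4iubgilb0} came from stability, and here I have no such estimates a priori, so I expect this to be the hardest step. What I would try first is an $\varepsilon$-regularity statement at a point $x\in\FB(u_r)$ in an annulus: smallness of $\int_{B_\rho(x)}|{\rm H}|^{n-1}$, the critical scale-invariant norm, combined with the Weiss pinching, should yield flatness and hence $C^{2}$ control via \cite{desilva2011rhs}. One could get this either through an Allard-type argument for the free boundary viewed as a varifold with ${\rm H}\in L^{n-1}$, or by noting that ${\rm H}=-\partial_{\nu\nu}u\ge0$ (mean convexity, \cite[Lemma 3.2]{CFFS25}) controls $D^2u$ near $\FB(u)$ through harmonic estimates. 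Without such uniform estimates the limit could have a singular free boundary, and I would then need to argue that a stationary one-homogeneous limit with ${\rm H}=0$ in a weak (varifold) sense is still flat. That is plausible via the same monotonicity and homogeneity reasoning, but it requires care.

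\textbf{Step 2: decomposition and flatness rate.} With the wedge alternative established at every large scale, I follow the proof of Proposition~\ref{prop:FBdichotomy} to decompose $u=u_++u_-$ outside $B_{R_0}$ with $H_0(u_\pm,r)=o(r)$. Remark~\ref{rem:flat_away} and the chain of results above then give the conclusions of Theorem~\ref{thm:axymRn} for every $n\ge3$.
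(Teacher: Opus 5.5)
\textbf{There is a genuine gap.} Your reduction matches the paper's. Everything downstream of Proposition~\ref{prop:FBdichotomy} (Corollary~\ref{cor:errplus0}, Lemmas~\ref{lem:barrier} and \ref{lem:precisectt}, and both moving-plane arguments) uses only the asymptotic decomposition, so the real task is a stability-free replacement of Lemma~\ref{lem:blowdown}. But that replacement is the step you leave open, and it is the whole content of the theorem.

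Without the curvature bound \eqref{eq:89g4iubgilb0} you lack the following:
\begin{itemize}
\item the nondegeneracy \eqref{eq:15aiosghf09qt40};
\item Hausdorff convergence of the free boundaries;
\item a classical limit;
\item any meaning for ``${\rm H}=0$ for the blow-down''.
\end{itemize}
Your Step~1 assumes all of these.

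Neither of your suggested routes closes the gap. First, $\int|{\rm H}|^{n-1}$ on an $(n-1)$-dimensional surface is the critical, scale-invariant exponent. Allard's regularity theorem needs ${\rm H}\in L^p$ with $p$ strictly larger than the dimension, so it gives no $C^{1,\alpha}$ control, let alone $C^2$. Second, smallness of $\partial_{\nu\nu}u$ in $L^{n-1}(\FB(u))$ does not bound $D^2u$ pointwise unless you already know the free boundary is regular. Your claim that ``${\rm H}$ vanishes only if $u$ is flat'' is also only asserted; it needs a rigidity argument.

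The missing idea is an $\varepsilon$-regularity estimate proved by blow-up plus a rigidity specific to Bernoulli (Lemmas~\ref{lem:smallK_imp_reg}--\ref{lem:vancurv}). It says that small $\mathbb H(\FB(u),B_1)$ forces $|D^2u|\le\eta$ in $B_{1/2}$, and hence $|D^2u|\le \eta/|x|$ outside a large ball. The proof has three steps.
\begin{itemize}
\item \emph{Point-picking.} If $|\mathrm{II}_{\FB(u)}|$ is not small, run an Anderson-type argument at a maximizer of ${\rm dist}(y,\partial B_{4/5})\,|\mathrm{II}_{\FB(u)}|(y)$. Rescaling there gives solutions with $|\mathrm{II}|\le 2$, $|\mathrm{II}|(0)=1$, and, by scale invariance, $\mathbb H\to0$.
\item \emph{Compactness.} Bounded second fundamental form yields $C^3$ bounds on $u$ (Lemma~\ref{lem:fbreg_imp_ureg}, where density estimates exclude two nearby sheets). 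So you can pass to a classical limit with $\mathbb H=0$.
\item \emph{Rigidity.} Since $|\nabla u|\le 1$, the function $|\nabla u|^2$ is subharmonic and equals $1$ on $\FB(u)$. Hopf's lemma then gives ${\rm H}>0$ strictly at each free boundary point, unless $|\nabla u|\equiv1$ nearby. In that case $\Delta|\nabla u|^2=2|D^2u|^2$ forces $D^2u\equiv0$ (\cite[Lemma 3.12]{CFFS25} plus unique continuation). This contradicts $|\mathrm{II}|(0)=1$.
\end{itemize}

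With these estimates, the compactness in Lemma~\ref{lem:blowdown} runs verbatim. Along the blow-down sequence $|\mathrm{II}_{\FB(u_i)}|\to0$ and the free boundaries converge in $C^2$, so the limit free boundary is a hyperplane. This gives the half-plane/wedge dichotomy directly, with no need to analyze cones with minimal boundary via the link-sphere equation.
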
    
The properties of $\mathbb H$ that we will need are precisely the following:
\begin{lemma}\label{lem:totHprop} The total mean curvature satisfies the following properties.
\begin{itemize}
    \item \textbf{Scaling invariance:} $\mathbb H(\lambda \Sigma, \lambda U)=\mathbb H(\Sigma, U)$ for any $\lambda>0$ and $U\subset\R^n$.
    \item \textbf{Monotonicity:} $\mathbb H(\Sigma, U)\leq \mathbb H(\Sigma, V)$ if $U\subset V\subset\R^n$.
    \item \textbf{Definiteness for the Bernoulli problem:} Let $U$ be a connected domain, and let $u:U\to[0,\infty)$ be a classical solution to the Bernoulli problem, with $\FB(u)\cap U\neq \emptyset$ and\footnote{The condition $|\nabla u|\leq 1$ is always true if $u$ is a restriction of  a global classical solution; see \cite[Lemma 3.2]{CFFS25}.} $|\nabla u|\leq 1$. If $\mathbb H(\FB(u), U)=0$, then $|D^2 u|\equiv 0$ in $\{u >0\}\cap U$.
\end{itemize}
\end{lemma}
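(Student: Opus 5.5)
The plan treats the three properties separately, since the first two are formal. For \textbf{scaling invariance}, we use that under $x\mapsto \lambda x$ the mean curvature transforms as ${\rm H}_{\lambda\Sigma}(\lambda x) = \lambda^{-1}{\rm H}_\Sigma(x)$, while the area element scales as $d\mathcal H^{n-1}\mapsto \lambda^{n-1}d\mathcal H^{n-1}$. Hence the integrand $|{\rm H}|^{n-1}d\mathcal H^{n-1}$ is invariant, and the change of variables gives $\mathbb H(\lambda\Sigma,\lambda U)=\mathbb H(\Sigma,U)$. \textbf{Monotonicity} is immediate, because the integrand is nonnegative and $\Sigma\cap U\subset\Sigma\cap V$.

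The substance lies in \textbf{definiteness}. If $\mathbb H(\FB(u),U)=0$, then, since ${\rm H}$ is continuous on the smooth free boundary, ${\rm H}\equiv 0$ on $\FB(u)\cap U$. The first step is to translate this into information on $u$. On $\FB(u)$ we have $u=0$ and $|\nabla u|=1$, with $\nu=-\nabla u$. The standard identity for level sets,
\[
\Delta u = u_{\nu\nu} + {\rm H}\,\partial_{-\nu}u\ \text{(up to sign convention)},
\]
together with $\Delta u=0$, gives $\partial_\nu(|\nabla u|^2)=2u_{\nu\nu}= \pm 2{\rm H} = 0$ on $\FB(u)\cap U$. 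Tangential derivatives of $|\nabla u|^2$ also vanish there, since $|\nabla u|\equiv1$ on $\FB(u)$. So $w:=|\nabla u|^2$ satisfies $w=1$ and $\nabla w=0$ on $\FB(u)\cap U$.

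The second step uses $|\nabla u|\le 1$. The function $w$ is subharmonic in $\{u>0\}\cap U$, since $\Delta w = 2|D^2u|^2\ge0$, and $w\le 1$ there. Pick any $x_0\in\FB(u)\cap U$ and a small ball $B$ around it on which $\{u>0\}\cap B$ is a smooth domain; $w$ attains its maximum $1$ at the boundary point $x_0$. If $w\not\equiv 1$ in the component of $\{u>0\}\cap B$ adjacent to $x_0$, Hopf's lemma for subharmonic functions gives $\partial_\nu w(x_0)>0$, contradicting $\nabla w(x_0)=0$. Hence $w\equiv1$ near $x_0$ in $\{u>0\}$, and then $|D^2u|^2=\tfrac12\Delta w=0$ there. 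By analyticity of the harmonic function $u$, $|\nabla u|^2\equiv1$ and $D^2u\equiv0$ on each connected component of $\{u>0\}\cap U$ whose boundary meets $\FB(u)\cap U$.

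The main obstacle is propagating this to all of $\{u>0\}\cap U$. A component $\Omega'$ of $\{u>0\}\cap U$ whose boundary avoids $\FB(u)\cap U$ satisfies $\overline{\Omega'}\cap U\subset\{u>0\}$. Since $U$ is connected, $\Omega'$ would then be both relatively open and relatively closed in $U$, so $\Omega'=U$, contradicting $\FB(u)\cap U\neq\emptyset$. Therefore every component touches $\FB(u)\cap U$, and the previous step applies. On each such component $u$ is affine, and the computation is consistent with the free boundary being locally planar there, since ${\rm H}=0$ and $D^2u=0$. The sign conventions in the first step need care, but only the vanishing of $u_{\nu\nu}$ is used, so the sign of ${\rm H}$ plays no role.
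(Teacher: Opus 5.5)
Your proposal is correct and follows essentially the same route as the paper, which invokes \cite[Lemma 3.12]{CFFS25} for exactly your mechanism: $|\nabla u|^2\le 1$ is subharmonic, its normal derivative on $\FB(u)$ is proportional to the mean curvature, and Hopf's lemma then forces $u$ to be locally a half-plane solution wherever ${\rm H}=0$, after which the paper concludes by unique continuation. You carry out this argument explicitly, and you also supply the connectivity step, which the paper leaves implicit: since $U$ is connected, every component of $\{u>0\}\cap U$ reaches $\FB(u)\cap U$.
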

\begin{proof}
    The first two properties are a standard consequence of the definition. The definiteness follows from \cite[Lemma 3.12]{CFFS25} and its proof, since it shows that ${\rm H}_{\FB(u)}$ can vanish at some $x_0\in \FB(u)\cap U$ only if there are some $e\in\Sph^{n-1}$ and $r>0$ such that $u\equiv ((x-x_0)\cdot e)_+$ in $B_r(x_0)$, and the result follows by unique continuation.            
\end{proof}
We now obtain a local regularity result under smallness of the total mean curvature, in the spirit of \cite{Anderson84}.
\begin{lemma}\label{lem:smallK_imp_reg}
Let $u:B_1\to [0,\infty)$ be a classical solution to the Bernoulli problem with $|\nabla u|\leq 1$ and $0\in \FB(u)$. Given $\eta>0$, there exists $\delta_0>0$ such that, if $\mathbb H(\FB(u),B_1)\leq \delta_0$,
then
$|D^2 u|\leq \eta$ in $\{u > 0\}\cap B_{1/2}$.
\end{lemma}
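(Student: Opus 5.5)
We argue by compactness and contradiction, using the definiteness property of Lemma~\ref{lem:totHprop}. Fix $\eta>0$ and suppose the statement fails. Then there are classical solutions $u_k:B_1\to[0,\infty)$ with $|\nabla u_k|\le 1$, $0\in\FB(u_k)$, $\mathbb H(\FB(u_k),B_1)\le 1/k$, and points $x_k\in\{u_k>0\}\cap B_{1/2}$ with $|D^2u_k(x_k)|>\eta$. Since $|\nabla u_k|\le1$ and $u_k(0)=0$, the $u_k$ are uniformly Lipschitz and bounded in $B_1$. Hence, up to a subsequence, $u_k\to u_\infty$ locally uniformly. We then want to upgrade this convergence enough to pass both the hypothesis $\mathbb H\to0$ and the lower bound on the Hessian to the limit.

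\textbf{Step 1: uniform curvature bounds.} The first step is to obtain a uniform bound $|D^2u_k|\le C$ in $\{u_k>0\}\cap B_{3/4}$. We do this by a point-picking (blow-up) argument. Suppose $\sup_{B_{3/4}}(\tfrac34-|x|)|D^2u_k|\to\infty$, and choose points maximizing $(\tfrac34-|x|)|D^2u_k|$. Rescaling by $\lambda_k=|D^2u_k(y_k)|^{-1}$ around the maximizing point $y_k$, we obtain Bernoulli solutions $\tilde u_k(x)=\lambda_k^{-1}u_k(y_k+\lambda_kx)$ with $|\nabla\tilde u_k|\le1$, $|D^2\tilde u_k(0)|=1$, and $|D^2\tilde u_k|\le 2$ on balls of radius tending to infinity. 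By the scaling invariance and monotonicity in Lemma~\ref{lem:totHprop}, $\mathbb H(\FB(\tilde u_k),B_{R})\le 1/k$ for every fixed $R$.

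Note that Lemma~\ref{lem:nondegcurv} applies near free boundary points. In addition, interior harmonic estimates together with $|\nabla\tilde u_k|\le 1$ give local $C^{2,\gamma}$ bounds up to the free boundary, since the free boundaries have bounded curvature and the Neumann condition is fixed. So $\tilde u_k$ converges in $C^2_{loc}$, with free boundaries converging smoothly as graphs, to a classical global solution $\tilde u_\infty$. Because $H$ is continuous under this convergence, $\mathbb H(\FB(\tilde u_\infty),B_R)=0$ for all $R$.

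We must still rule out two degenerate situations. First, if the points $y_k$ stay far from the free boundary relative to $\lambda_k$, the limit is harmonic with $|\nabla|\le1$, hence affine by Liouville, contradicting $|D^2|=1$. Second, if the free boundary is at bounded distance, definiteness forces $D^2\tilde u_\infty\equiv0$ in the connected component containing $0$; this contradicts $|D^2\tilde u_\infty(0)|=1$, obtained from $C^2$ convergence. Therefore the uniform bound holds.

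\textbf{Step 2: contradiction in the original sequence.} With $|D^2u_k|\le C$ in $\{u_k>0\}\cap B_{3/4}$, the same convergence argument applies directly to the $u_k$. Using Lemma~\ref{lem:nondegcurv} at $0\in\FB(u_k)$, we obtain $u_k\ge x\cdot e_k-Cr^2$, so the limit $u_\infty$ is nontrivial with $0\in\FB(u_\infty)$. We then get $C^2$ convergence on compact subsets of $\overline{\{u_\infty>0\}}\cap B_{3/4}$, and $\mathbb H(\FB(u_\infty),B_{3/4})=0$.

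The points $x_k\to x_\infty\in\overline{B_{1/2}}$ can be handled in two cases. If $\operatorname{dist}(x_k,\FB(u_k))$ stays bounded below, then $|D^2u_\infty(x_\infty)|\ge\eta$. Otherwise $x_\infty\in\FB(u_\infty)$, and $|D^2u_\infty|\ge\eta$ there by continuity up to the boundary. In either case, applying definiteness on the connected component of $\{u_\infty>0\}\cap B_{3/4}$ whose closure contains $x_\infty$ gives the contradiction. One caveat: that component must meet $\FB(u_\infty)$. If it does not, the component's boundary lies on $\partial B_{3/4}$, which contradicts the Lipschitz bound $u_\infty(0)=0$ together with $u_\infty>0$ at $x_\infty$ only if we argue via the positivity set; this case needs to be checked.

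\textbf{Main obstacle.} The main obstacle is the compactness: ensuring smooth ($C^2$ up to the boundary) convergence of free boundaries so that $\mathbb H$ and $D^2u$ pass to the limit. In particular, free boundary sheets could collapse, as in the wedge case of Lemma~\ref{lem:blowdown}. The Hessian bound prevents loss of regularity of each sheet. A limit in which two sheets touch is locally $|x\cdot e|$, which still has vanishing Hessian, so applying definiteness separately on each side is consistent with the contradiction.
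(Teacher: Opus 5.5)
Your argument is correct and is essentially the paper's: you argue by contradiction, use a point-picking blow-up to obtain uniform bounds, pass to the limit, and conclude with the definiteness property in Lemma~\ref{lem:totHprop}. You select points by $|D^2u|$ rather than by $|\mathrm{II}_{\FB(u)}|$, which only adds the harmless Liouville case. The $C^2$ compactness up to the free boundary comes not from interior harmonic estimates but from the Kinderlehrer--Nirenberg-type regularity that the paper isolates as Lemma~\ref{lem:fbreg_imp_ureg}.

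The caveat you left open is immediate: since $B_{3/4}$ is connected and $u_\infty(0)=0$, no component of $\{u_\infty>0\}\cap B_{3/4}$ is relatively closed in $B_{3/4}$, so each one has a free boundary point inside $B_{3/4}$. Near such a point you apply definiteness to the one-sided classical restriction, and then propagate $D^2u_\infty\equiv 0$ through the whole component by analyticity.
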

We first need the following lemma, which says that the regularity of the free boundary alone implies the regularity of $u$.
\begin{lemma}\label{lem:fbreg_imp_ureg}
Let $u:B_1\to [0,\infty)$ be a classical solution. If $|\mathrm{II}_{\FB(u)}|\leq C_0$ in $B_1$, then $|D^2u|+|D^3 u|\leq C$ in $\{u > 0\}\cap B_{1/2}$, for some $C = C(n, C_0)$.
\end{lemma}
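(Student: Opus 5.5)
The plan is to flatten the free boundary near each of its points and then apply boundary regularity for a harmonic function with smooth Neumann data. The key observation is that $\nabla u$ is the inner unit normal along $\FB(u)$. Differentiating $|\nabla u|^2=1$ tangentially, and using $\Delta u=0$, determines $D^2u$ on $\FB(u)$ in terms of the second fundamental form: $D^2u = -\mathrm{II}$ on tangential directions, the tangential–normal components vanish, and $\partial_{\nu\nu}u = \mathrm H$ up to sign. In particular $|D^2 u|\le C|\mathrm{II}|\le CC_0$ on $\FB(u)\cap B_1$. This controls the boundary values of $D^2u$, but we also need interior control and third derivatives. We will obtain both from a Neumann (or oblique) problem on a uniformly $C^{1,1}$ domain.

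\textbf{Step 1.} The bound $|\mathrm{II}|\le C_0$ means $\FB(u)\cap B_{3/4}$ is, at scale $r_0=c(n)/C_0$ around each of its points, a graph $x_n=\phi(x')$ with $\|\phi\|_{C^{1,1}}\le C$. This is standard for embedded hypersurfaces with bounded second fundamental form, once we know the graphical patches do not pile up. We control this pile-up using $|\nabla u|\le 1$ and $|\nabla u|=1$ on $\FB(u)$. Nearby parallel sheets bounding the same positive phase would force a thin region of $\{u>0\}$ in which $u$ is harmonic, vanishes on both sides, and has gradient of size $1$ on the boundary. Harnack and Hopf-type considerations rule this out once the sheet separation is below $c(n,C_0)$. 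Alternatively, the nondegeneracy of Lemma~\ref{lem:nondegcurv} gives $u\ge x_n-Cr^2$ near each free boundary point, and this prevents a second sheet from entering the positive side within distance $c$.

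\textbf{Step 2.} Having a $C^{1,1}$ graph, we bootstrap its regularity. The function $u$ is harmonic, $u=0$ on the graph, and $|\nabla u|=1$ there. Boundary Schauder estimates for the Dirichlet problem on a $C^{1,\beta}$ domain give $u\in C^{1,\beta}$ up to the boundary. Then $\partial_\nu u=1$ is a nonlinear Neumann-type condition, and the hodograph / Kinderlehrer--Nirenberg argument (\cite{Kinderlehrer-Nirenberg}, as in \cite[Lemma 3.7]{CFFS25}) upgrades $\phi$ to $C^{2,\beta}$ and then $C^{3,\beta}$ with quantitative bounds. Quantitativity is ensured because $u$ is bounded by $C r_0$ in the patch, since $|\nabla u|\le1$ and $u=0$ on the graph. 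This gives $|D^2u|+|D^3u|\le C$ in a neighbourhood of width $c r_0$ of $\FB(u)\cap B_{1/2}$ inside $\{u>0\}$.

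\textbf{Step 3.} At points of $\{u>0\}\cap B_{1/2}$ at distance $\ge cr_0$ from $\FB(u)$, interior harmonic estimates together with $|\nabla u|\le 1$ give $|D^2u|+|D^3u|\le C r_0^{-1}+Cr_0^{-2}$, since $\nabla u$ is harmonic and bounded. Combining the two regions gives the claim with $C=C(n,C_0)$.

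The main obstacle is Step 1: ruling out many nearly parallel sheets, so that the graphical radius is uniform and the bounds are genuinely quantitative. I would first try the nondegeneracy barrier from Lemma~\ref{lem:nondegcurv}, which uses only the curvature bound via the computation above. If that fails, I would fall back on a compactness-contradiction argument.
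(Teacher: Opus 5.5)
Your architecture is the paper's. At scale $\sim 1/C_0$ you show that the part of $\FB(u)$ bounding the positive component adjacent to a free boundary point is a single $C^{1,1}$ graph. You then get $C^3$ bounds there from Kinderlehrer--Nirenberg, and use interior harmonic estimates elsewhere. However, the step you single out as the main obstacle is not settled as written: excluding a second sheet that is close to the first and bounds the \emph{same} positive component.

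The route you say you would try first, Lemma~\ref{lem:nondegcurv}, is circular. Its hypothesis is $|D^2u|\le C_\circ$ in $\{u>0\}\cap B_1$, and its proof is a Taylor expansion of $u-x_n$ along segments \emph{inside} the positive phase. Your boundary computation bounds $D^2u$ only on $\FB(u)$. You cannot push this inward by the maximum principle for the harmonic entries of $D^2u$, because nothing is known on $\partial B_r\cap\{u>0\}$. An interior Hessian bound near $\FB(u)$ is precisely what the lemma is supposed to produce.

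The ``Harnack and Hopf'' heuristic does not close the gap by itself either. In a slab of width $\eps$, Hopf together with $|\nabla u|=1$ only says $u\approx\eps$ in the middle of the slab. That is perfectly consistent with $|\nabla u|\le 1$. A contradiction must use that the slab is thin compared to the scale $1/C_0$ over which it persists.

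The paper supplies this through density estimates for the positive component \cite[Lemma 3.5]{CFFS25}. A self-contained substitute is a flux identity. Let $U$ be the component of $\{u>0\}\cap B_s(x_0)$ adjacent to $x_0\in\FB(u)$, lying between two nearly parallel sheets with gap $\eps$ at $x_0$. The curvature bound confines $U$ to a slab of width at most $C(\eps+C_0s^2)$. Integrating $\Delta u=0$ over $U$, with $|\nabla u|=1$ on $\FB(u)$ and $|\nabla u|\le 1$ on $\partial B_s$, gives
\[
c\,s^{n-1}\le \mathcal H^{n-1}\big(\FB(u)\cap\partial U\big)\le \mathcal H^{n-1}(\partial B_s\cap U)\le C(\eps+C_0s^2)\,s^{n-2}.
\]
Taking $s=c'/C_0$ yields $\eps\ge c/C_0$.

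Your compactness fallback also works, provided you blow up at the scale $\eps$ of the separation. The limit is a bounded harmonic function in a flat slab vanishing on both sides, hence zero. This contradicts $|\nabla u|=1$ on the boundary, which passes to the limit by boundary regularity.

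With either fix in place, your Steps 2 and 3 go through. You do use $|\nabla u|\le 1$, which is not in the statement. This is legitimate and in fact necessary: the claim fails for $u=M(2+x_1^2-x_2^2)$ in $B_1$, which has empty free boundary. The bound holds wherever the lemma is applied.
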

\begin{proof}
By a scaling, translation, a finite covering argument, and interior harmonic estimates, it suffices to find some $r=r(C_0,n)$ such that $|D^2 u | + |D^3 u|\leq C$ in $U_r\cap B_{r/2}$, where $U_r$ is the connected component of $\{u>0\}\cap B_r$ with $0\in \overline{U_r}$.

Now, for $r \ll 1$ small enough we can ensure that $\FB(u)\cap \overline{U_r}$ is a union of at most two ``parallel'' graphs in $B_r$. Then, density estimates for $U_r$ (see \cite[Lemma 3.5]{CFFS25}) ensure that such graphs are at a uniform distance, so that up to making $r$ smaller (depending only on $C_0$ and $n$) we have that $\FB(u)\cap B_r$ consists of only one graphical component. Finally, a classical solution in $B_r$ whose free boundary is a single $C^2$ graph enjoys uniform $C^3$ estimates in $\{u > 0\}\cap B_{r/2}$, by e.g. \cite{Kinderlehrer-Nirenberg}.
\end{proof}
\begin{proof}[Proof of Lemma~\ref{lem:smallK_imp_reg}]
\noindent \textbf{Step 1.} We first show that $|\mathrm{II}_{\FB(u)}|\leq \eta$ in $B_{1/2}$ as long as $\delta_0>0$ is small enough.

We argue in the spirit of \cite{Anderson84}. Assume for contradiction that for some $\eta>0$ this were false. Then there would be $u_i$ as in the statement, with $\mathbb H(\FB(u_i),B_1)\xrightarrow{i\to\infty} 0$, but with $|\mathrm{II}_{\FB(u_i)}(x_i)|> \eta$ for some $x_i\in B_{3/4}$. In particular 
    $$\max_{y\in B_{4/5}} {\rm dist}(y, \partial B_{4/5})|\mathrm{II}_{\FB(u_i)}|\geq \frac{1}{20}\eta,$$
    and we can find a sequence $y_i$ with
    $$\max_{y\in B_{4/5}} {\rm dist}(y, \partial B_{4/5})|\mathrm{II}_{\FB(u_i)}|(y)={\rm dist}(y_i, \partial B_{4/5})|\mathrm{II}_{\FB(u_i)}|(y_i)=:r_i\geq \frac{1}{20}\eta.$$
    Put $\lambda_i:=|\mathrm{II}_{\FB(u_i)}|(y_i)$, and consider $\widetilde u_i=\lambda_iu_i(\lambda_i^{-1}x+y_i):B_{r_i/2}\to[0,\infty)$, which satisfies now $|\mathrm{II}_{\FB(\widetilde u_i)}|\leq 2$ and $|\mathrm{II}_{\FB(\widetilde u_i)}|(0)=1$. By Lemma~\ref{lem:fbreg_imp_ureg} and Arzel\`a--Ascoli, there is $r\in(0,\frac{1}{40}\eta)$ such that: A subsequence converges to a limit $u_\infty:B_r\to[0,\infty)$, and there is a connected component $U$ of $\{u_\infty>0\}\cap B_r$ such that $v:=u_\infty\, \mathbf{1}_U$ is a classical solution, $0\in \FB(v)$, and $|\mathrm{II}_{\FB(v)}|(0)=1$.
    
    On the other hand, by the first two bullets in Lemma~\ref{lem:totHprop}, $\mathbb H(\FB(v),B_r)=0$. By the third bullet there, then $|\mathrm{II}_{\FB(v)}|(0)=0$, which yields a contradiction.

\noindent \textbf{Step 2.} Conclusion.

By Step 1, Lemma~\ref{lem:fbreg_imp_ureg} and a simple covering argument, for $\delta_0>0$ small enough we have $|D^2u| + |D^3u| \leq C$ in $\{u > 0\}\cap B_{3/4}$ for some $C=C(n)$. The claim that---up to making $\delta_0>0$ smaller---we have $|D^2u| \leq \eta$ as well follows: Otherwise, the $C^3$ estimates and Arzel\`a--Ascoli allow us to pass to a solution with flat free boundary but nonzero Hessian, and the third bullet in Lemma~\ref{lem:totHprop} yields a contradiction again.
\end{proof}
We then obtain small asymptotic curvatures (with the correct scaling rate) for global solutions with finite total mean curvature:
\begin{lemma}\label{lem:vancurv}
    Let $n\geq 3$, and let $u$ be a classical solution to the Bernoulli problem in $\R^n$ with $\mathbb H(\FB(u),\R^n)<\infty$. Let $\eta>0$. Then, there is $R_0=R_0(u, \eta)$ such that $|D^2 u|\leq \frac{\eta}{|x|}$ in $\{u > 0\}\cap \R^n\setminus B_{R_0}$.
\end{lemma}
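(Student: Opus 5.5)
The plan is to combine the scale invariance of $\mathbb H$ with the tail of a convergent integral, and then apply Lemma~\ref{lem:smallK_imp_reg} at every point far from the origin, at the scale comparable to its distance to the origin. Fix $\eta>0$ and let $\delta_0=\delta_0(\eta')$ be given by Lemma~\ref{lem:smallK_imp_reg} for a parameter $\eta'$ to be chosen. Since $\mathbb H(\FB(u),\R^n)<\infty$, there is $R_1$ such that $\mathbb H(\FB(u),\R^n\setminus B_{R_1})\le\delta_0$. Also, $|\nabla u|\le 1$ globally by Lemma~\ref{lem:grad1bound}.

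First treat points near the free boundary. Let $x\in\{u>0\}$ with $|x|\ge R_0:=4R_1$, and set $\rho:=|x|/4$, so that $B_{2\rho}(x)\subset\R^n\setminus B_{R_1}$. Suppose $d:={\rm dist}(x,\FB(u))<\rho$, and let $y\in\FB(u)$ realize this distance. Then $B_\rho(y)\subset B_{2\rho}(x)$. Consider the rescaling $u_\rho(z):=\rho^{-1}u(y+\rho z)$ on $B_1$. It is a classical solution with $|\nabla u_\rho|\le1$ and $0\in\FB(u_\rho)$. By the scaling invariance and monotonicity in Lemma~\ref{lem:totHprop}, $\mathbb H(\FB(u_\rho),B_1)=\mathbb H(\FB(u),B_\rho(y))\le\delta_0$. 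Lemma~\ref{lem:smallK_imp_reg} then gives $|D^2u_\rho|\le\eta'$ in $B_{1/2}\cap\{u_\rho>0\}$, that is, $|D^2u|\le\eta'/\rho=4\eta'/|x|$ in $B_{\rho/2}(y)$. This covers $x$ whenever $d<\rho/2$.

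Next treat points far from the free boundary, $d\ge\rho/2$. Here $u$ is harmonic in $B_{\rho/2}(x)$ with $|\nabla u|\le1$, so interior estimates applied to $\partial_i u$ only give $|D^2u(x)|\le C/\rho$, with a constant that is not small. This is the main obstacle: we must improve it to $\eta/|x|$. I would argue by compactness on the region $\{u>0\}\cap(B_{2\rho}\setminus B_{\rho/2})$, rescaled to unit size.

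Take a sequence $x_k\to\infty$ with $|x_k||D^2u(x_k)|\ge\eta$. Rescale by $\rho_k=|x_k|/4$ about the origin; the limits are Lipschitz with constant at most $1$.

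If the rescaled free boundaries do not reach within a bounded distance of $x_k/\rho_k$, then $u$ is harmonic on balls of radius $\gg\rho_k$ around $x_k$. Interior estimates on the larger radius then give the Hessian bound $C/{\rm dist}$, and this is $\ll1/|x_k|$ provided the component is harmonic in a region whose size is much larger than $|x_k|$. If instead the harmonic region has size comparable to $|x_k|$, then free boundary points lie within $O(\rho_k)$ of $x_k$. By the near-free-boundary bound and a covering of the annulus, the rescaled Hessians tend to zero on an $O(1)$ neighbourhood of those free boundary points. A Harnack/continuation argument from the boundary, using Cauchy data, then yields convergence to a function with vanishing Hessian near the free boundary. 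By unique continuation of harmonic functions on the connected component, the limit is affine and has zero Hessian at the limit of $x_k/\rho_k$. This contradicts the assumed lower bound, provided the $C^2$ convergence holds there, which follows from interior estimates.

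The remaining subtle case is a component of $\{u>0\}$ that avoids the free boundary entirely near $x_k$ at all comparable scales. Such a component is a positive harmonic function on an exterior-type region that is (sub)linear. Liouville-type separation of variables, as in Proposition~\ref{prop:compactclass}, then gives decay of $D^2u$ like $o(1/|x|)$ unless the component is one where the free boundary is compact. Since $\FB(u)\cap(\R^n\setminus B_{R_1})$ has the small-curvature control, this also fits the unique continuation argument above.

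Finally, choose $\eta'=\eta/4$ and enlarge $R_0$ as needed to conclude.
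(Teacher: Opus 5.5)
Your treatment of points with $d<\rho/2$ is correct, and it is exactly the paper's argument. The paper's proof consists only of this step: choose $R_0$ with $\mathbb H(\FB(u),\R^n\setminus B_{R_0})\le\delta_0$ and apply Lemma~\ref{lem:smallK_imp_reg}, rescaled, on balls of radius comparable to $|x|$. You do this more carefully, centring at the nearest free boundary point as that lemma requires. You are also right that this gives nothing at points with ${\rm dist}(x,\FB(u))\gtrsim|x|$. The paper's one-line proof does not address them. Its later use of the lemma, for \eqref{eq:89g4iubgilb0} and for $|\mathrm{II}_{\FB(u_i)}|\to0$ in Lemma~\ref{lem:blowdownH}, only needs the bound near $\FB(u)$.

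Your argument for the far case, however, is not a proof as written.

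\emph{Second case.} You claim the near-free-boundary bound makes the rescaled Hessians small on an $O(1)$ neighbourhood of the free boundary points within bounded distance of $x_k/\rho_k$. But at $w\in\FB(u)$ the bound only holds in $B_{|w|/4}(w)$. If these points accumulate at the origin after rescaling (as when $\FB(u)$ is bounded), you get no control at unit scale.

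\emph{Third case.} This is meant to cover that situation, but it does not work as stated. It calls the region ``exterior-type'' and invokes the expansion of Proposition~\ref{prop:compactclass}. Absence of free boundary at scales comparable to $|x_k|$ does not make the component exterior, since the free boundary may reappear at larger scales. The clause ``unless the free boundary is compact'' inverts the logic.

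\emph{Continuation step.} The ``Harnack/continuation argument using Cauchy data'' never shows that the component of the limiting positivity region containing $x_\infty$ actually reaches a region where the Hessian vanishes.

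A clean way to close the gap is the following.
\begin{itemize}
\item \emph{Setup.} Let $u_k(z)=\rho_k^{-1}u(\rho_kz)$; these are $1$-Lipschitz with $u_k(0)\to0$, so pass to a limit $u_\infty$. Let $G$ be the open set of points having a neighbourhood on which $u_k>0$ for all large $k$. Let $V$ be the component of $G$ containing $x_\infty=\lim x_k/\rho_k$; note $B_{1/4}(x_\infty)\subset G$. Then $u_\infty$ is harmonic on $G$, with $C^2_{\rm loc}$ convergence there.
\item \emph{Case $V\supseteq\R^n\setminus\{0\}$.} The singularity at $0$ is removable, and Liouville together with $u_\infty\ge0$ forces $u_\infty$ to be constant.
\item \emph{Otherwise.} Pick $z\in\partial V\setminus\{0\}$. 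Since $z\notin G$ while points of $V$ near $z$ lie in $\{u_k>0\}$, along a subsequence there are $w_k\in\FB(u_k)$ with $w_k\to z$. Apply Lemma~\ref{lem:smallK_imp_reg} at $\rho_kw_k$, using $\mathbb H(\FB(u),\R^n\setminus B_{\rho_k|z|/4})\to0$. This gives $D^2u_k\to0$ on compact subsets of $V\cap B_{|z|/8}(z)$. Hence $D^2u_\infty=0$ on a nonempty open subset of $V$, and therefore on all of $V$ by analyticity.
\end{itemize}
In both cases $D^2u_\infty(x_\infty)=0$, contradicting $|D^2u_k(x_k/\rho_k)|\ge\eta/4$.
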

\begin{proof}
     Given $\delta_0>0$, there is $R_0=R_0(u, \delta_0)$ such that $\mathbb H(\FB(u),\R^n\setminus B_{R_0})\leq \delta_0$, just since $\mathbb H(\FB(u),\R^n)<\infty$. Moreover, $|\nabla u|\leq 1$ by \cite[Lemma 3.2]{CFFS25}. Applying Lemma~\ref{lem:smallK_imp_reg}---appropriately rescaled, on balls $B_{|x|/2}(x)$ for $x\in \R^n\setminus B_{2R_0}$---and making $R_0$ larger gives the result.
\end{proof}
We can then find the exact analogue of Lemma~\ref{lem:blowdown}:
\begin{lemma}[Compactness toward a two-plane configuration]\label{lem:blowdownH}
For every $\varepsilon\in(0,1)$ there exists $\delta=\delta(n,\varepsilon)>0$ such that the following holds.

Let $n\geq 3$, and let $u$ be a classical solution in $\R^n$. Assume that
$$\left[W(u,2)- W(u,1)\right]+\int_{\FB(u)\cap(\R^n\setminus B_\delta)}|{\rm H}_{\FB(u)}|^{n-1}\le \delta$$
and 
\[
\FB(u)\cap \bigl(B_2\setminus \overline{B}_1\bigr)\neq\varnothing.
\]
Then there exists a unit vector $e\in \mathbb S^{n-1}$ such that either
\begin{equation}\label{eq:7216tablok}
    \|u-(e\cdot x)_+\|_{L^\infty(B_{16})}\le \varepsilon \qquad \mbox{or} \qquad 
\|u-|e\cdot x|\|_{L^\infty(B_{16})}\le \varepsilon .
\end{equation}
Moreover, in the second case, letting $\mathcal R_e:\R^n\to\R^n$ be a rotation sending $e$ to $e_n$, we have:
\[
\mathcal R_e(\{u>0\}) = \{x_n > \tilde g^{(+)}(x')\} \cup \{x_n < \tilde g^{(-)}(x')\} \qquad\text{in}\quad B_8\setminus \overline{B}_{1/2},
\]
where $\tilde g^{(\pm)} : B_8' \to \mathbb{R}$,   $\tilde g^{(-)}< \tilde g^{(+)}$, and there is $C=C(n)>0$ such that
\[
\|\tilde g^{(\pm)}\|_{L^\infty(B_8')} + C^{-1}\|D^2 \tilde g^{(\pm)}\|_{L^\infty(B_8')} \leq \eps\leq 1
\]
for some $C$ depending only on $n$.

In particular,
\[
\FB(u)\cap \partial B_r\neq\varnothing
\qquad\text{for every }r\in(1,4).
\]
\end{lemma}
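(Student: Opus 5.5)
The argument follows the proof of Lemma~\ref{lem:blowdown} verbatim; the only change is the source of the curvature estimate at unit scale. There, stability outside $B_\delta$ and the definition of $n_*$ gave \eqref{eq:89g4iubgilb0}. Here that input is replaced by the smallness of $\int_{\FB(u)\setminus B_\delta}|{\rm H}|^{n-1}$, used through Lemma~\ref{lem:smallK_imp_reg}.

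\textbf{Step 1 (curvature estimate).} Take $x\in \FB(u)$ with $|x|=r\ge 6\delta$. Rescale $u$ on $B_{r/3}(x)$ to unit size. By scaling invariance and monotonicity (Lemma~\ref{lem:totHprop}), the rescaled total mean curvature on $B_1$ is at most $\delta$, and $|\nabla u|\le 1$ holds by Lemma~\ref{lem:grad1bound}. For $\delta\le\delta_0(\eta)$, Lemma~\ref{lem:smallK_imp_reg} then gives $|D^2u|\le \eta/r$ in $B_{r/6}(x)\cap\{u>0\}$. Covering the relevant region, we obtain \eqref{eq:89g4iubgilb0}, in fact with a small constant. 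From here, Lemmas~\ref{lem:grad1bound} and \ref{lem:nondegcurv}, a Harnack chain, and $\FB(u)\cap(B_2\setminus\overline B_1)\neq\varnothing$ yield \eqref{eq:78t287folvwlv}, \eqref{eq:15aiosghf09qt40} and \eqref{eq:89g4iubgilb02}, exactly as before.

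\textbf{Step 2 (compactness).} Argue by contradiction with $\delta_i=1/i$. Up to a subsequence, $u_i\to u_\infty$ in $C^0_{loc}(\R^n\setminus\{0\})$, with the positivity sets, free boundaries and zero sets converging in the Hausdorff sense. The Weiss energies converge, so $u_\infty$ is one-homogeneous by Lemma~\ref{lem:weissformula}, and $u_\infty\not\equiv0$ by nondegeneracy. The difference from before is in identifying the limit. By Step 1, $|D^2u_i|\le \eta_i/|x|$ with $\eta_i\to0$ near the free boundaries, away from the origin. Hence each sheet of $\FB(u_\infty)\setminus\{0\}$ is a limit with vanishing second fundamental form, that is, locally a hyperplane piece. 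Since $u_\infty$ is also harmonic with $D^2 u_\infty=0$ near these sheets, each positivity component of $u_\infty$ is locally affine.

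In the classical-limit case this gives $D^2u_\infty\equiv0$ on each component of $\{u_\infty>0\}$ by unique continuation; this is the definiteness bullet of Lemma~\ref{lem:totHprop}. Then $u_\infty$ is a one-homogeneous piecewise-linear solution, and since $|\nabla u_\infty|=1$ on the free boundary, it is either $(e\cdot x)_+$ or $|e\cdot x|$. If instead two sheets collapse at some $y\neq0$, we reproduce the Weiss argument of Lemma~\ref{lem:blowdown} to obtain $u_\infty=|(x-y)\cdot e|$. In every case $u_\infty$ is one of the two profiles in \eqref{eq:7216tablok}, which contradicts the choice of the sequence. Note that no dimensional restriction $n<n_*$ enters, because flatness of the limit now comes from vanishing curvature rather than from a stable-cone classification.

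\textbf{Step 3 (graph description).} In the second alternative, apply Lemma~\ref{cor_closetoV_disc_reg2} at points of $\FB(u)\cap(B_8\setminus\overline B_{1/2})$, using the Hessian bound from Step 1. This gives the two graphs $\tilde g^{(\pm)}$ with the stated bounds. Since $|\tilde g^{(\pm)}|$ is small, $\FB(u)$ meets every sphere $\partial B_r$ with $r\in(1,4)$.

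\textbf{Main obstacle.} The delicate point is Step 2's passage from small total mean curvature to affineness of the limit, which requires Hessian bounds that are uniform up to the free boundary. These are supplied by Lemma~\ref{lem:fbreg_imp_ureg} together with Lemma~\ref{lem:smallK_imp_reg}. One must check that these bounds apply in annuli where sheets of the free boundary may be close to one another. Lemma~\ref{lem:smallK_imp_reg} handles this, since it needs only $0\in\FB(u)$ and control of the total mean curvature, with no assumption on how the sheets are arranged.
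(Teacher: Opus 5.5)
Your proposal is correct and follows the paper's proof essentially step by step. The stability and $n<n_*$ curvature input is replaced by the rescaled small-total-mean-curvature estimate, and the Weiss limit is one-homogeneous with a flat free boundary by vanishing curvature, so no dimensional restriction enters; graphicality then comes from Lemma~\ref{cor_closetoV_disc_reg2}.

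Applying Lemma~\ref{lem:smallK_imp_reg} directly on $B_{|x|/3}(x)\subset\R^n\setminus B_\delta$ is in fact the uniform form of Lemma~\ref{lem:vancurv} that the argument needs. Note, though, that the small constant $\eta/|x|$ holds only near the free boundary, and elsewhere interior estimates give just $C/|x|$; this is all you actually use.
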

\begin{proof}
The proof starts exactly as that of Lemma~\ref{lem:blowdown}. The Hessian and nondegeneracy estimates \eqref{eq:89g4iubgilb0}--\eqref{eq:89g4iubgilb02} follow from Lemma~\ref{lem:vancurv} and a Harnack chain argument via interior harmonic estimates.

To see \eqref{eq:7216tablok} we argue by contradiction, considering $\delta_i=\frac{1}{i}$ and solutions $u_i$ contradicting the thesis, as in the proof of Lemma~\ref{lem:blowdown}. The corresponding subsequential limit is again a one-homogeneous solution. Moreover, by the vanishing curvature estimates from Lemma~\ref{lem:vancurv}---recall that we can make $\eta$ arbitrarily small there up to making $\delta_0$ smaller---we find that $|\mathrm{II}_{\FB(u_i)}|\to 0$ in $C^0_{loc}$. Then, by $C^2_{loc}(\R^n\setminus\{0\})$ convergence of the free boundaries (given by the $C^3$ estimates from Lemma~\ref{lem:fbreg_imp_ureg} plus Arzel\`a--Ascoli), we conclude that $\FB(u_\infty)=\{e\cdot x=0\}$ for some $e\in\Sph^{n-1}$---without ever requiring stability. If there is a tangential collapse of two sheets at some $x\in \FB(u_\infty)\setminus \{0\}$, we find again that $u_\infty=|e\cdot x|$. Otherwise, $u_\infty$ is a classical solution and thus of the form $u_\infty=(e\cdot x)_+$. In both cases we reach a contradiction, as eventually \eqref{eq:7216tablok} is satisfied by the $u_i$.

The graphicality follows by Lemma~\ref{cor_closetoV_disc_reg2}, just as in Lemma~\ref{lem:blowdown}, concluding the proof.
\end{proof}
Finally, we can give:
\begin{proof}[Proof of Theorem~\ref{thm:fincurv}]
With Lemma~\ref{lem:blowdownH} at hand, we can follow almost verbatim the arguments in the rest of the article. First, exactly as in Proposition~\ref{prop:FBdichotomy} it follows that either $\FB(u)$ is bounded, or there is some $R_0=R_0(u)$ such that $\FB(u)\cap \partial B_R\neq \emptyset$ for every $R\geq R_0$. In the first case, Proposition~\ref{prop:compactclass} gives radial symmetry. In the second one, we obtain the decomposition $u=u_++u_-$, away from a ball, where $u$ and the $u_\pm$ satisfy the asymptotics in Proposition~\ref{prop:FBdichotomy} and Corollary~\ref{cor:errplus0}: the same proofs go through, with the finite index condition replaced by having finite total mean curvature, since they only use the properties from Lemma~\ref{lem:blowdown} (which we can replace by Lemma~\ref{lem:blowdownH} in our case). Sections~\ref{sec:symxn} and \ref{sec:symx'} can then be replicated exactly, as they exclusively use the asymptotics in Corollary~\ref{cor:errplus0} plus the fact that $u$ is a global classical solution. We then conclude  by putting everything together as in the proof of Theorem~\ref{thm:axymRn}.
\end{proof}


\bibliographystyle{plain}
\bibliography{mybib.bib}

\end{document}